 % version as of 31.8.2018
\documentclass{amsart}
\usepackage[margin=2.5cm]{geometry}
\usepackage{amsmath,amsfonts,amssymb,amsthm}
\usepackage{graphics}
\usepackage{epsfig, esint}
\usepackage{graphics,color}
\usepackage[dvipsnames]{xcolor}
\setlength{\parindent}{0cm}
\usepackage{paralist}

\usepackage[numbers,sort&compress]{natbib}

%%%%%%%%%%%%%%%%Temporary%%%%%%%%%%%%%%%
%  \usepackage[notcite,notref,color]{showkeys}
%  \usepackage{refcheck}
   \definecolor{labelkey}{gray}{.8}
   \definecolor{refkey}{gray}{.8}
   \usepackage{hyperref}
\hypersetup{colorlinks, linkcolor=blue, urlcolor=red, filecolor=green, citecolor=blue}

%%% Timestamp %%% - to be removed later !
% \usepackage{fancyhdr}
% \usepackage[yyyymmdd,hhmmss]{datetime}
% \pagestyle{fancy}
% \rfoot{Compiled on \today\ at \currenttime}

% Coloring changes
%\newcommand{\BR}[1]{\color{red}}
\usepackage{xstring} % just for coloring of changes
\newcommand{\BR}[1]{%
     \IfEqCase{#1}{%
         {1}{\color{red}}
 {2}{\color{black}}
 {3}{\color{red}}
 {4}{\color{blue}}
 }[\PackageError{BC}{Undefined option to tree: #1}{}]%
 }
\newcommand{\ER}{\color{black}}
% end coloring

%%%%%%%%%%%%%%%%%%%%%%%%%%%%%%%%%%%%%%%%%%%fa

\providecommand{\uoset}[3]{\underset{\phantom{#1}}{\overset{#2}{#3}}}
\providecommand{\dt}{\, \mathrm{d} t}

\providecommand{\R}{\mathbb{R}}

 % a-harm fcts in the exterior domain

\providecommand{\osc}{\operatorname{osc}}
\providecommand{\ee}{{\rm e}}

\newcommand{\e}{\varepsilon}

% steps in the proofs
\newcommand{\step}[1]{\medskip\noindent\textbf{Step #1. }}
\newcommand{\substep}[1]{\medskip\noindent\textit{Substep #1. }}

\providecommand{\PfStart}[1]{\newcounter{#1}}%\refstepcounter{#1}} % initiates counter=1
\providecommand{\PfStep}[2]{\refstepcounter{#1} \ifnum\value{#1}=1%\smallskip
\else\medskip\noindent\fi\textbf{Step \arabic{#1}.}\label{#2}}
%%%

\newcommand{\ignore}[1]{}

\newtheorem{proposition}{Proposition}
\newtheorem{theorem}{Theorem}
\newtheorem{remark}{Remark}
\newtheorem{lemma}{Lemma}
\newtheorem{corollary}{Corollary}
\newtheorem{assumption}{Assumption}

%\urladdr{www.math.sc.edu/$\sim$howard} % Delete if not wanted.
\author[P.~Bella]{Peter Bella}
\address{Fakult\"at f\"ur Mathematik, Technische Universit\"at Dortmund}
\email{peter.bella@tu-dortmund.de}
\author[M. Sch\"affner]{Mathias Sch\"affner}
\address{Fakult\"at f\"ur Mathematik, Technische Universit\"at Dortmund}
\email{mathias.schaeffner@tu-dortmund.de}

\title[Non-uniformly parabolic equations and the RCM]{Non-uniformly parabolic equations and applications to the random conductance model} 

\begin{document}
\maketitle

%\centerline{\LARGE{NOT FOR DISTRIBUTION}}
%\tableofcontents

\begin{abstract}
 We study local regularity properties of linear, non-uniformly parabolic finite-difference operators in divergence form related to the random conductance model on $\mathbb Z^d$. In particular, we provide an oscillation decay assuming only certain summability properties of the conductances %$\omega$ 
 and their inverse,  thus improving  recent results in that direction. As an application, we provide a local limit theorem for the random walk in a random  degenerate and unbounded \ER environment.
 
 \end{abstract}

\section{Introduction}

In this contribution, we continue our research \cite{BS19a,BS19b} on regularity and stochastic homogenization of non-uniformly elliptic equations. In \cite{BS19a}, we studied local regularity properties of weak solutions of elliptic equations in divergence form
\begin{equation}
\nabla\cdot a\nabla u=0
\end{equation}
and proved local boundedness and the validity of Harnack inequality under essentially minimal integrability conditions on the ellipticity of the coefficients $a$. This generalizes the seminal theory of De Giorgi, Nash and Moser \cite{DG57,Nash58,Moser61} and improves in an optimal way classic results due to Trudinger \cite{T71} (see also \cite{MS68}). In \cite{BS19b}, we adapted the regularity theory from \cite{BS19a} to discrete finite-difference equations in divergence form and used this to obtain a quenched invariance principle for random walks among random degenerate conductances (see the next section for details).

In the present contribution, we extend our previous results in two ways
\begin{enumerate}[(A)]
 \item (deterministic part) We establish local regularity properties in the sense of an oscillation decay (and thus H\"older-continuity) for solution of \textit{discrete} version of parabolic equations 
\begin{equation}
\partial_t u- \nabla\cdot a\nabla u=0
\end{equation}
under relaxed ellipticity conditions compared to very recent contributions in the field (see e.g.\ \cite{CS19,ADS16,AT19,CD15}). 
 \item (random part) Based on the regularity result in (A), we establish a local limit theorem for random walks among degenerate and unbounded random conductances.
\end{enumerate}

\subsection{Setting and main deterministic regularity results}

In this paper we study the nearest-neighbor random conductance model on the $d$-dimensional Euclidean lattice $(\mathbb Z^d,\mathbb B^d)$, for $d\geq2$. Here $\mathbb B^d$ is given by the set of nonoriented nearest-neighbor bounds, that is $\mathbb B^d:=\{\{x,y\}\,|\,x,y\in\mathbb Z^d,\,|x-y|=1\}$. 

We set 
\begin{equation}\label{def:omega}
 \Omega:=(0,\infty)^{\mathbb B^d}
\end{equation}
and call $\omega(\ee)$ the \textit{conductance} of the bond $\ee\in\mathbb B^d$ for every $\omega=\{\omega(\ee)\,|\,\ee\in \mathbb B^d\}\in\Omega$. To lighten the notation, for any $x,y\in\mathbb Z^d$, we set
\begin{equation*}
\omega(x,y)=\omega(y,x):=\omega(\{x,y\})\quad\forall \{x,y\}\in\mathbb B^d,\qquad \omega(\{x,y\})=0\quad\forall \{x,y\}\notin\mathbb B^d.
\end{equation*}
We study local regularity properties of functions $u: Q\subset \R\times\mathbb  Z^d\to\R$ satisfying the parabolic finite-difference equation
\begin{equation*}
 \partial_t u - \mathcal L^\omega u=0, 
\end{equation*}
where $\mathcal L^\omega$ is the elliptic operator defined by
\begin{equation}\label{def:Lomega}
 (\mathcal L^\omega u)(x)=\sum_{y\in\mathbb Z^d}\omega(x,y)(u(y)-u(x)).
\end{equation}
We emphasize here that $\mathcal L^\omega$ is in fact an elliptic finite-difference operator in divergence form, see \eqref{Lomegadiv} below. Our main deterministic regularity result is the following (see Section~\ref{sec:notation} for  notation).
\begin{theorem}[Parabolic oscillation decay]\label{T:oscillation}
Fix $d\geq2$, $\omega\in\Omega$ and $p\in(1,\infty]$, $q\in(\frac{d}2,\infty]$ satisfying $\frac1p+\frac1q<\frac2{d-1}$. Then there exist $N=N(d)\in\mathbb N$ and $\theta_{\rm P}:(0,\infty)\times(0,\infty)\to(0,1)$ which is continuous and monotonically increasing in both variables such that the following is true: Let $u$ be such that
%$\partial_t u-\mathcal L^\omega u=0$ in $Q(n):=[t_0-n^2,t_0]\times B(x_0,n)$,
\begin{equation*} 
\partial_t u-\mathcal L^\omega u=0 \quad \textrm{in} \quad Q(n):=[t_0-n^2,t_0]\times B(x_0,n),
\end{equation*}
for some $t_0\in\R$, $x_0\in\mathbb Z^d$ and $n\ge N$. Then
\begin{equation*}
 {\rm osc}\,(u,Q(\tfrac18n))\leq \theta_{\rm P}\, {\rm osc}\,(u,Q(n)),%{\rm osc}_{Q_\frac18(n)}\,u\leq \theta\, {\rm osc}_{Q(n)} u.
\end{equation*} 
where $\theta_{\rm P}=\theta_{\rm P}(\|\omega\|_{\underline L^p(B(x_0,n))},\|\omega^{-1}\|_{\underline L^q(B(x_0,n))})$ and ${\rm osc}\,(u,Q):=\max_{(t,x)\in Q} u(t,x)-\min_{(t,x)\in Q} u(t,x)$ denotes the oscillation of $u$.
\end{theorem}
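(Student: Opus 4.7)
The plan is to follow the classical De Giorgi--Nash--Moser strategy adapted to the discrete, non-uniformly parabolic setting, using the integrability threshold $\tfrac1p+\tfrac1q<\tfrac{2}{d-1}$ exactly as it is used in the elliptic work of \cite{BS19a,BS19b}. First I would derive a parabolic Caccioppoli inequality for sub- and supersolutions of $\partial_t u=\mathcal L^\omega u$: multiplying the equation by $\eta^2 (u-k)_\pm$ with a space-time cutoff $\eta$ and summation by parts in space (using the divergence form \eqref{Lomegadiv}) gives an estimate of the form
\[
\sup_t\sum_x \eta^2(u-k)_\pm^2(t,x)+\sum_t\sum_{e} \omega(e)\bigl|\nabla[\eta(u-k)_\pm](e)\bigr|^2 \lesssim \iint (u-k)_\pm^2\bigl(\omega|\nabla\eta|^2+|\partial_t(\eta^2)|\bigr),
\]
after absorbing the usual cross terms. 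The standing integrability assumption on $\omega$ and $\omega^{-1}$ enters through a discrete weighted Sobolev inequality (already used in \cite{BS19a,BS19b}) that converts this energy control into a gain of integrability.

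Next I would run two Moser-type iterations in parallel. For subsolutions, iterating the Caccioppoli estimate with the weighted Sobolev inequality on nested cylinders yields an $L^\infty$--$L^2$ estimate
\[
\max_{Q(\tfrac{n}{2})} u_+\lesssim C\bigl(\|\omega\|_{\underline L^p},\|\omega^{-1}\|_{\underline L^q}\bigr)\Bigl(\fint_{Q(n)} u_+^{2}\Bigr)^{1/2}.
\]
For supersolutions, an analogous iteration applied to $(u+\varepsilon)^{-s}$, together with a logarithmic estimate obtained by testing with $\eta^2/(u+\varepsilon)$ (yielding a BMO-type bound on $\log u$ in time), provides the weak $L^\varepsilon$-type control needed on a lower level set. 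Here the main obstacle is that the discrete chain rule fails and summation by parts produces error terms that must be controlled via $\omega$ and $\omega^{-1}$ on nearest-neighbor bonds; one must arrange these terms so that only the norms appearing in the statement of the theorem are invoked, which is exactly where $\tfrac1p+\tfrac1q<\tfrac{2}{d-1}$ is used to close the iteration.

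The crucial step, and the one I expect to be the hardest, is the \emph{expansion of positivity}: if $u$ is a nonnegative supersolution in $Q(n)$ with $u\ge 1$ on a substantial fraction of an initial time-slice, propagate this into a uniform lower bound $u\ge c>0$ on the smaller cylinder $Q(\tfrac{n}{8})$, with $c$ depending only on the integrability norms. The parabolic log-estimate above transports the initial lower bound forward in time in a measure-theoretic sense, and then a De Giorgi-type level-set lemma upgrades this to a pointwise bound; both ingredients rely on the same weighted Sobolev inequality but applied on space-time cylinders rather than balls. The discrete nature forces a careful treatment of the interplay between the $n^2$-scaling in time and the nearest-neighbor scale $1$, which is why a threshold $n\ge N(d)$ appears.

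Finally, the oscillation decay follows by a standard dichotomy. Setting $M=\max_{Q(n)}u$, $m=\min_{Q(n)}u$ and $v=\tfrac{2}{M-m}(u-\tfrac{M+m}{2})$, the function $v$ is a solution in $Q(n)$ with $|v|\le 1$, so either $1-v$ or $1+v$ is a nonnegative supersolution exceeding $1$ on at least half of the initial slice (by a Chebyshev-type argument on a suitable time slice). Applying the expansion of positivity yields a uniform lower bound on $Q(\tfrac{n}{8})$ for one of them, which translates into $\mathrm{osc}(u,Q(\tfrac{n}{8}))\le (1-c)\,\mathrm{osc}(u,Q(n))$ and produces the function $\theta_{\rm P}$, continuous and monotone in the two integrability norms through the quantitative dependencies tracked in the Moser iteration.
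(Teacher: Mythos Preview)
Your outline follows the same high-level architecture as the paper (Caccioppoli $\to$ local boundedness via Moser iteration $\to$ weak Harnack/expansion of positivity $\to$ dichotomy), and the final reduction to $1\pm v$ is essentially identical to what the paper does. But there is a genuine gap at the step where you invoke ``a discrete weighted Sobolev inequality (already used in \cite{BS19a,BS19b})'' to close the Moser iteration under the condition $\tfrac1p+\tfrac1q<\tfrac2{d-1}$.

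The point is that the improvement from the classical threshold $\tfrac2d$ to $\tfrac2{d-1}$ in \cite{BS19b} comes not from a weighted Sobolev inequality on balls but from \emph{optimizing the cutoff} in the Caccioppoli inequality and then applying Sobolev on \emph{spheres} $\partial B(k)$. In the elliptic case this optimization is done pointwise in the radial variable. In the parabolic case the cutoff must be \emph{time-independent} (you have no useful control on $\partial_t u$), so the object to be minimized is $\sum_e \omega(e)(\nabla\eta(e))^2\int_{I_2}(u_t^\alpha(e))^2\,dt$, with the time integral sitting \emph{inside} the spherical sums. Pushing the spherical Sobolev inequality through this time integral does not close directly: one is forced to sacrifice a small amount of space-time integrability, producing an intermediate $L^{2(1+\varepsilon)}$ norm and a residual factor $\bigl(\sup_t\|u_t^\alpha\|_{\underline L^2}^2\bigr)^{1-\nu}$ with some $\nu\in(0,1)$ (this is the content of the paper's Lemma~\ref{L:key}, specifically \eqref{est:keysubsol}). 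A second layer of iteration is then needed to absorb this extra factor. If you run the standard Caccioppoli with an affine cutoff and Sobolev on balls as your sketch suggests, you will only recover the $\tfrac2d$ condition of \cite{ADS16,AT19,CS19}, not $\tfrac2{d-1}$.

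A smaller point: the paper does not run a separate Moser iteration on $(u+\varepsilon)^{-s}$ for supersolutions. Instead it observes that $W=g(u+\gamma)$ (with $g$ a smooth truncation of $-\log$) is \emph{subcaloric}, establishes a log-energy bound for $W$ (Lemma~\ref{L:weakHarnack}) which gives the measure-theoretic propagation in time, and then applies the local boundedness theorem directly to $W$. This sidesteps the negative-power iteration entirely. Your BMO/De~Giorgi route is a viable alternative, but it would again need the $\tfrac2{d-1}$ boundedness as input.
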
%\footnote{\BR{1} Formulating Thm 1 with $x_0 = 0$ and $t_0=0$ would make it lighter to digest, and everybody understand that it is translation invariant result...}
\begin{remark}
The restrictions on the exponents $p$ and $q$ in Theorem~\ref{T:oscillation} are natural in the sense that they are  essentially necessary in order to establish local boundedness for solutions of $\partial_t u- \mathcal L^\omega u=0$, see Remarks~\ref{rem:Tboundpqoptimal} and \ref{rem:trapping} below.

\smallskip

In recent works \cite{AT19,CS19}, the conclusion of Theorem~\ref{T:oscillation} is contained under the more restrictive relation $\frac1p+\frac1q<\frac2d$. Note that \cite{CS19} also contains results for time-depending conductances and \cite{AT19} allows for more general speed measures. It would be interesting to see to which extend the method of the present paper can also yield improvements in these cases. %We expect that the relation $\frac1p+\frac1q<\frac2{d-1}$ is essentially optimal. 
\end{remark}
Obviously, Theorem~\ref{T:oscillation} applies also to $\mathcal L^\omega$-harmonic functions. However, it turns out that in the elliptic case a slightly more precise result can be proven under weaker assumptions:
\begin{theorem}[Elliptic oscillation decay]\label{P:oscillationelliptic}
Let $d\ge2$, and if $d \ge 3$ let $p,q\in(1,\infty]$ be such that $\frac1p+\frac1q<\frac2{d-1}$. Then there exists  $\theta_{\rm E}:[1,\infty)\to(0,1)$, which is continuous and monotonically increasing, such that the following holds: Let $\omega \in \Omega$ and $u$ solves $\mathcal L^\omega u=0$ in $B(x_0,4n)$ for some $x_0\in\mathbb Z^d$. Then,
% Fix $d\geq2$, $\omega\in\Omega$ and $p,q\in(1,\infty]$ satisfying $\frac1p+\frac1q<\frac2{d-1}$ if $d\geq3$. Then there exists  $\theta_{\rm E}:[1,\infty)\to(0,1)$, which is continuous and monotonically increasing, such that the following holds: Let $u$ be such that $\mathcal L^\omega u=0$ in $B(x_0,4n)$, $x_0\in\mathbb Z^d$. Then,
%
\begin{equation*}%\nonumber
 {\rm osc}\,(u,B(x_0,n))\leq \theta_{\rm E}\, {\rm osc}\,(u,B(x_0,4n)),%{\rm osc}_{Q_\frac18(n)}\,u\leq \theta\, {\rm osc}_{Q(n)} u.
\end{equation*} 
where
\begin{equation*}
 \theta_{\rm E}=\begin{cases} \theta_{\rm E}(\|\omega\|_{\underline L^1(B(x_0,4n))}\|\omega^{-1}\|_{\underline L^1(B(x_0,4n))})&\mbox{if $d=2$}\\\theta_{\rm E}(\|\omega\|_{\underline L^p(B(x_0,4n))}\|\omega^{-1}\|_{\underline L^q(B(x_0,4n))})&\mbox{if $d\geq3$}
\end{cases}.
\end{equation*}
\end{theorem}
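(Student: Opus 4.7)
The strategy is to derive Theorem~\ref{P:oscillationelliptic} as a direct consequence of the discrete \emph{elliptic Harnack inequality} proved in the authors' companion paper \cite{BS19b}. That inequality asserts, for any nonnegative $\mathcal L^\omega$-harmonic function $w$ on $B(x_0,4n)$,
\begin{equation*}
\sup_{B(x_0,n)} w \;\leq\; C\, \inf_{B(x_0,n)} w,
\end{equation*}
with $C$ depending monotonically and continuously only on $d,p,q$ and on the product $\|\omega\|_{\underline L^p(B(x_0,4n))}\|\omega^{-1}\|_{\underline L^q(B(x_0,4n))}$ for $d\geq3$ under $\frac{1}{p}+\frac{1}{q}<\frac{2}{d-1}$, respectively on $\|\omega\|_{\underline L^1}\|\omega^{-1}\|_{\underline L^1}$ for $d=2$. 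Taking this as input, the oscillation decay follows from a purely algebraic two-sided argument.

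Concretely, set $M := \max_{B(x_0,4n)} u$, $m := \min_{B(x_0,4n)} u$, $M_1 := \max_{B(x_0,n)} u$, and $m_1 := \min_{B(x_0,n)} u$. Both $v_+ := M - u$ and $v_- := u - m$ are nonnegative and $\mathcal L^\omega$-harmonic on $B(x_0,4n)$, since $\mathcal L^\omega$ is linear and annihilates constants. Applying Harnack to $v_\pm$ and rewriting the extrema in terms of $M, m, M_1, m_1$ gives
\begin{equation*}
M - m_1 \;\leq\; C(M - M_1), \qquad M_1 - m \;\leq\; C(m_1 - m).
\end{equation*}
Adding the two inequalities and regrouping yields $(1+C)(M_1 - m_1) \leq (C-1)(M-m)$, hence
\begin{equation*}
\osc(u, B(x_0,n)) \;\leq\; \frac{C-1}{C+1}\, \osc(u, B(x_0,4n)),
\end{equation*}
and $\theta_{\mathrm E} := (C-1)/(C+1) \in (0,1)$ inherits the continuity and monotonicity of $C$ in the product of norms.

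The actual analytic obstacle — the Harnack inequality itself — is handled in \cite{BS19b} (building on its continuous counterpart \cite{BS19a}). The novelty there, compared to Trudinger's classical approach and to \cite{AT19, CS19}, is precisely the sharp exponent condition $\frac{1}{p}+\frac{1}{q}<\frac{2}{d-1}$ rather than the classical $\frac{2}{d}$; the gain of one in the denominator is produced by a refined Moser iteration in which the weighted Sobolev step is performed along hyperplane slices in a distinguished coordinate direction, so that only a $(d-1)$-dimensional embedding is invoked, while the remaining direction is treated by a one-dimensional Sobolev estimate combined with a Cauchy--Schwarz distribution of the two weights $\omega$ and $\omega^{-1}$. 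Granted that Harnack inequality, the proof of Theorem~\ref{P:oscillationelliptic} reduces to the elementary computation displayed above.
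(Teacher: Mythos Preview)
Your argument is correct for $d\geq3$ and is in fact the route the authors took in the continuum companion \cite{BS19a}. But there is a genuine gap in the $d=2$ case. The paper's own Remark following the theorem statement says explicitly that in $d=2$ the borderline case $p=q=1$ is one ``for which we did not prove Harnack inequality in \cite{BS19a}'', and the discrete paper \cite{BS19b} does not supply it either. So your black-box appeal to a full Harnack inequality with $L^1$--$L^1$ dependence in two dimensions is not supported by the cited source; indeed, covering this borderline case is one of the points of the present theorem.

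The paper's route avoids full Harnack and goes instead through a \emph{weak} Harnack inequality, Theorem~\ref{T:weakHarnack:elliptic}, which is proved here from scratch: if $u\geq0$ is $\mathcal L^\omega$-superharmonic on $B(4n)$ and $|\{x\in B(2n):u(x)\geq\varepsilon\}|\geq\lambda|B(2n)|$, then $u\geq\gamma$ on $B(n)$ with an explicit $\gamma$ depending only on $\lambda$, $\varepsilon$ and the product $\Lambda^\omega$. The proof combines a Caccioppoli-type estimate for $g(u)$ (with $g$ as in \eqref{def:g}) with the local boundedness Theorem~\ref{T:ellipticboundedness}; the $d=2$ part of the latter, estimate \eqref{est:T:boundl12d}, needs only $L^1$ control of $\omega$ and $\omega^{-1}$, which is exactly what lets the argument go through at the borderline. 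The oscillation decay is then obtained as in the parabolic proof of Theorem~\ref{T:oscillation}: normalize so that $\max_{B(4n)}u=-\min_{B(4n)}u=M$, assume $|\{u\geq0\}|\geq\tfrac12|B(2n)|$ (otherwise replace $u$ by $-u$), apply weak Harnack to $v=1+u/M$ to get $v\geq\gamma$ on $B(n)$, and conclude with $\theta_{\rm E}=1-\gamma/2$. Your two-sided $\tfrac{C-1}{C+1}$ computation requires the full Harnack constant $C$, which is strictly more than what is available here in $d=2$.
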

\begin{remark}
In \cite{BS19a}, the corresponding statement of Theorem~\ref{T:oscillation} for $d\geq3$ is proven in the continuum setting as a consequence of elliptic Harnack inequality. Note that, in $d=2$ we can consider the borderline case $p=q=1$ for which we did not proved Harnack inequality in \cite{BS19a}. Previously, elliptic Harnack inequality and thus oscillation decay in the form of Theorem~\ref{P:oscillationelliptic} was proven in \cite{ADS16} under the more restrictive relation $\frac1p+\frac1q<\frac2d$ (see also the classic paper \cite{T71}).
\end{remark}

\subsection{Local limit theorem}

In what follows we consider random conductances $\omega$ that are distributed according to a probability measure $\mathbb P$  on $\Omega$ equipped with the $\sigma$-algebra $\mathcal F:=\mathcal B((0,\infty))^{\otimes \mathbb B^d}$ and we write $\mathbb E$ for the expectation with respect to $\mathbb P$. We introduce the group of space shifts $\{\tau_x:\Omega\to\Omega\,|\,x\in\mathbb Z^d\}$ defined by
\begin{equation}\label{def:shift}
 \tau_x\omega(\cdot):=\omega(\cdot+x)\qquad\mbox{where for any $\ee=\{\underline \ee,\overline\ee\}\in\mathbb B^d$, $\ee+x:=\{\underline e+x,\overline\ee+x\}\in\mathbb B^d$.}
\end{equation}
For any fixed realization $\omega$, we study the reversible continuous time Markov chain, $X = \{X_t : t \geq0\}$, on $\mathbb Z^d$ with generator $\mathcal L^\omega$ given in \eqref{def:Lomega}. Following \cite{ADS15}, we denote by ${\bf P}_x^\omega$ the law of the process starting at the vertex $x\in\mathbb Z^d$ and by ${\bf E}_x^\omega$ the corresponding expectation. $X$ is called the \textit{variable speed random walk} (VSRW) in the literature since it waits at $x\in\mathbb Z^d$ an exponential time with mean $1/\mu^\omega(x)$, where $\mu^\omega(x)=\sum_{y\in\mathbb Z^d}\omega(x,y)$ and chooses its next position $y$ with probability $p^\omega(x,y):=\omega(x,y)/\mu^\omega(x)$.

\begin{assumption}\label{ass}
Assume that $\mathbb P$ satisfies the following conditions:
\begin{enumerate}[(i)]
 \item (stationary) $\mathbb P$ is stationary with respect to shifts, that is $\mathbb P\circ \tau_x^{-1}=\mathbb P$ for all $x\in\mathbb Z^d$.
 \item (ergodicity) $\mathbb P$ is ergodic, that is $\mathbb P[A]\in\{0,1\}$ for any $A\in\mathcal F$ such that $\tau_x(A)=A$ for all $x\in\mathbb Z^d$
% \item (moment condition) There exists $p,q\in(1,\infty]$ satisfying 
% \begin{equation}\label{eq:pq}
% \frac1p+\frac1q<\frac2{d-1}
% \end{equation}
 %
% such that
% %
% \begin{equation}\label{ass:moment}
% \mathbb E[\omega(\ee)^p]<\infty,\quad \mathbb E[\omega(\ee)^{-q}]<\infty\qquad\mbox{for any $\ee\in\mathbb B^d$}.
% \end{equation}
\end{enumerate}
\end{assumption}

Starting with the seminal contribution~\cite{SS04}, a considerable effort has been invested in the derivation of quenched invariance principles under various assumptions on the conductances, see the surveys \cite{Biskup,Kumagai} and the discussion below. The following quenched invariance principe is the starting point for the probabilistic aspects of our contribution.

\begin{theorem}[Quenched invariance principle, \cite{BS19b,Biskup}]\label{T}
 Suppose $d\geq2$ and that Assumption~\ref{ass} is satisfied. Moreover, suppose that there exists $p,q\in [1,\infty]$ satisfying $\frac1p+\frac1q<\frac2{d-1}$ such that
 \begin{equation*}%\label{ass:moment}
 \mathbb E[\omega(\ee)^p]<\infty,\quad \mathbb E[\omega(\ee)^{-q}]<\infty\qquad\mbox{for any $\ee\in\mathbb B^d$}.
 \end{equation*}
 For $n\in\mathbb N$, set  $X_t^{(n)}:=\frac1n X_{n^2t}$, $t\geq0$. Then, for $\mathbb P$-a.e.\ $\omega$ under ${\bf P}_0^\omega$, $X^{(n)}$ converges in law to a Brownian motion on $\R^d$ with a deterministic and non-degenerate covariance matrix $\Sigma^2$.
\end{theorem}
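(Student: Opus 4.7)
The plan is to follow the martingale-decomposition strategy: construct a stationary harmonic corrector $\chi:\Omega\times\mathbb{Z}^d\to\R^d$, deduce a martingale CLT for the corrected process, and close the argument by showing that $\chi(\omega,\cdot)$ is sublinear. For the corrector I would use the standard Hilbert-space projection: Assumption~\ref{ass} together with $\mathbb{E}[\omega(\ee)]<\infty$ (implied by $p\ge 1$) makes the space of square-integrable conductance-weighted gradient fields well-defined, and orthogonally projecting the coordinate map $x\mapsto x_i$ off the subspace of potential gradients yields a function $\Phi_i(\omega,x):=x_i-\chi_i(\omega,x)$ that is $\mathcal L^\omega$-harmonic on $\mathbb{Z}^d$. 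Then $M_t^{(i)}:=\Phi_i(\omega,X_t)-\Phi_i(\omega,X_0)$ is a $\mathbf{P}_0^\omega$-martingale, and the Kipnis--Varadhan martingale CLT combined with ergodicity of the environment viewed from the particle gives $\frac{1}{n}M_{n^2t}\Rightarrow \Sigma B_t$ with a deterministic $\Sigma^2$; non-degeneracy of $\Sigma^2$ follows from a Rayleigh-quotient lower bound using $\mathbb{E}[\omega(\ee)^{-q}]<\infty$.

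The main obstacle is quenched sublinearity:
\begin{equation*}
\lim_{n\to\infty}\frac{1}{n}\max_{x\in B(0,n)}|\chi(\omega,x)|=0\qquad\mbox{$\mathbb{P}$-a.s.}
\end{equation*}
$L^1$-averaged sublinearity on $B(0,n)$ is essentially automatic from Birkhoff's theorem applied to the stationary, centred gradient field $\nabla\chi$. Upgrading it to the uniform statement under the \emph{relaxed} moment condition $\frac{1}{p}+\frac{1}{q}<\frac{2}{d-1}$ is precisely where the elliptic regularity of Theorem~\ref{P:oscillationelliptic} enters: apply it to the harmonic function $\Phi_i(\omega,\cdot)$ on a sequence of dyadic balls $B(0,4^k)$ to obtain a Campanato-type H\"older estimate with constant depending only on $\|\omega\|_{\underline L^p}$ and $\|\omega^{-1}\|_{\underline L^q}$ on those balls; these quantities are almost surely bounded in the large-scale limit by the spatial ergodic theorem. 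A Borel--Cantelli step along the geometric scales $4^k$ then converts $L^1$-sublinearity into uniform sublinearity on $\mathbb{P}$-almost every environment.

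With sublinearity in place, the decomposition
\begin{equation*}
X_t^{(n)}=\frac{1}{n}M_{n^2t}+\frac{1}{n}\chi(\omega,X_{n^2t})
\end{equation*}
reduces the invariance principle to two items: tightness of $X_{n^2t}/n$ (which can be extracted, e.g., from the parabolic oscillation decay of Theorem~\ref{T:oscillation} applied to the associated heat kernel, giving the requisite equicontinuity in the Skorokhod sense) and vanishing of the corrector term, which follows from sublinearity together with the just-mentioned tightness. The martingale CLT then delivers $X^{(n)}\Rightarrow \Sigma B$. The deterministic regularity results of part~(A) of the paper enter only, but decisively, in the sublinearity step; everything else is a soft consequence of Assumption~\ref{ass} and the Kipnis--Varadhan framework.
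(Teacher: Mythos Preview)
The paper does not prove this theorem; its entire proof consists of the citations to \cite{BS19b} (for $d\geq 3$) and \cite{Biskup} (for $d=2$). Your sketch is in essence an outline of the argument in \cite{BS19b}, and the corrector/martingale strategy you describe is indeed the correct one.

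Two points deserve correction. First, the elliptic input used in \cite{BS19b} to upgrade averaged sublinearity of $\chi$ to pointwise sublinearity is the \emph{local boundedness} estimate for subharmonic functions (Theorem~\ref{T:ellipticboundedness} in the present paper), not the oscillation decay of Theorem~\ref{P:oscillationelliptic}. Oscillation decay controls $\osc_{B(n)}\Phi_i$ in terms of $\osc_{B(4n)}\Phi_i$, which iterates from large scales to small and does not by itself bound $\max_{B(n)}|\chi|$; what one actually needs is an $L^\infty$--$L^r$ maximum inequality of the form $\max_{B(n)}|\Phi_i|\lesssim \|\Phi_i\|_{\underline L^r(B(2n))}$, which together with $L^r$-averaged sublinearity yields the uniform statement directly. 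Second, invoking the parabolic oscillation decay Theorem~\ref{T:oscillation} for tightness is both unnecessary and anachronistic: Theorem~\ref{T} is quoted from prior work, and the proof in \cite{BS19b} predates (and does not use) the parabolic regularity developed in the present paper. Tightness there is obtained from the martingale functional CLT together with corrector sublinearity via a standard bootstrap, without any heat-kernel continuity input.
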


\begin{proof}
 For $d\geq3$ this is \cite[Theorem~2]{BS19b} and for $d=2$ this can be found in \cite{Biskup}.

% The case $d=2$ is slightly different, since the result holds even in the critical case $p=q=1$~\cite{Biskup,BS19b}. 
% \ER 
%  and for $d=2$ this can be found in \cite{Biskup}. 
\end{proof}

%  In the case $d=2$ is special since due to (under the optimal condition $p=q=1$, see also the Appendix of \cite{BS19b}) 
%\begin{remark}
%The conclusion of Theorem~\ref{T} was recently proved by Andres, Deuschel and Slowik \cite{ADS15} under the more restrictive moment condition with \eqref{eq:pq} replaced by
%%
%\begin{equation}\label{eq:pqold}
%\frac1p+\frac1q<\frac2{d}.
%\end{equation}
%%
%The result of \cite{ADS15} was already extended to more general random graphs \cite{DNS} and to dynamic situations, see \cite{ACDS18,BR18}. To the best of our knowledge Theorem~\ref{T} contains the first invariance principle in the general stationary \& ergodic setting under less restrictive assumptions compared to \eqref{eq:pqold} valid in $d\geq3$. In \cite{Biskup}, Biskup proved QFCLT under the minimal moment condition $p=q=1$ in two dimensions (see \cite{BBT16} for an example where the QFCLT fails but \eqref{ass:moment} holds for any $p,q\in(0,1)$).
%\end{remark}

In this contribution, we provide a refined convergence statement under slightly stronger moment conditions - namely a local limit theorem. Consider the heat-kernel $p^\omega$ of $X$, characterized by
\begin{equation}\label{def:pt}
 p_t^\omega(x,y)=p^\omega(t;x,y)={\bf P}_x^\omega[X_t=y]\qquad\mbox{for $t\geq0$ and $x,y\in\mathbb Z^d$}.
\end{equation}
The local limit theorem is essentially a pointwise convergence result of the (suitably scaled) heat kernel $p^\omega$ of $X$ towards the Gaussian transition density of the limiting Brownian motion of Theorem~\ref{T}.  Set
\begin{equation}\label{def:kt}
 k_t(x):=k_t^{\Sigma}(x):=\frac1{\sqrt{(2\pi t)^d\det(\Sigma^2)}}\exp\biggl(-\frac{x\cdot (\Sigma^2)^{-1}x}{2t}\biggr),
\end{equation}
where $\Sigma$ as in Theorem~\ref{T}.  

\begin{assumption}\label{ass2}
There exists $p\in(1,\infty]$ and $q\in(\frac{d}2,\infty]$ satisfying 
 \begin{equation}\label{eq:pq}
 \frac1p+\frac1q<\frac2{d-1}
 \end{equation}
 such that
 \begin{equation}\label{ass:moment}
 \mathbb E[\omega(\ee)^p]<\infty,\quad \mathbb E[\omega(\ee)^{-q}]<\infty\qquad\mbox{for any $\ee\in\mathbb B^d$}.
 \end{equation}
\end{assumption}

Now we are in position to state our main probabilistic result:
\begin{theorem}[Quenched local limit theorem]\label{T:Locallimit}
Suppose that Assumptions~\ref{ass} and \ref{ass2} are satisfied. For given compact sets $I\subset (0,\infty)$ and $K\subset\R^d$ it holds
\begin{equation}\label{eq:claimlocallimit}
 \lim_{n\to\infty}\max_{x\in K}\sup_{t\in I}|n^dp_{n^2t}^\omega(0,\lfloor nx\rfloor)-k_t(x)|=0,\quad\mbox{$\mathbb P$-a.s..}
\end{equation}
\end{theorem}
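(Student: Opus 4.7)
The plan follows the now-classical route (cf.~\cite{ADS16,CD15,AT19,CS19}): combine the quenched invariance principle (Theorem~\ref{T}, whose hypotheses are implied by Assumption~\ref{ass2}) with a uniform parabolic H\"older estimate derived from Theorem~\ref{T:oscillation}, plus an a priori on-diagonal heat-kernel bound, to upgrade the weak convergence of the rescaled walk into uniform pointwise convergence of its transition density.

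\textbf{Setup.} Work on the full-probability event on which Theorem~\ref{T} holds and on which Birkhoff's ergodic theorem---applied to $\omega(\ee)^p$ and $\omega(\ee)^{-q}$, both integrable by Assumption~\ref{ass2}---yields that for every compact $K'\subset\R^d$ and every compact subinterval $J\subset(0,\infty)$,
\[
\limsup_{n\to\infty}\sup_{r\in J,\ x_0\in\mathbb Z^d\cap nK'}\Bigl(\|\omega\|_{\underline L^p(B(x_0,rn))}+\|\omega^{-1}\|_{\underline L^q(B(x_0,rn))}\Bigr)<\infty.
\]
Set $u^n(t,x):=n^d p^\omega_{n^2t}(0,\lfloor nx\rfloor)$. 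Since $(s,y)\mapsto p^\omega_s(0,y)$ solves $\partial_s p-\mathcal L^\omega p=0$ on $(0,\infty)\times\mathbb Z^d$, the function $u^n$, viewed on the rescaled lattice $\tfrac1n\mathbb Z^d$, satisfies a rescaled parabolic equation of the form covered by Theorem~\ref{T:oscillation}.

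\textbf{Uniform bounds on $u^n$.} Fix enlarged compacts $I'\supset I$, $K'\supset K$ and let $n$ be large. A uniform upper bound $\sup_{I'\times K'}u^n\le C(\omega,I',K')$ follows from a parabolic Moser-type local $L^\infty$-estimate (available under \eqref{eq:pq}, essentially implicit in \cite{BS19b}) applied on a macroscopic parabolic cylinder, using the mass identity
\[
\int_{\R^d}u^n(t,x)\,dx=\sum_{y\in\mathbb Z^d}p^\omega_{n^2t}(0,y)=1
\]
to bound the spatial mean of $u^n$. Second, iterating Theorem~\ref{T:oscillation} on dyadically shrinking parabolic cylinders of radii $(1/8)^k$, from macroscopic scale down to the microscopic scale $N/n$, produces the uniform H\"older estimate
\[
|u^n(t_1,x_1)-u^n(t_2,x_2)|\le C\bigl(|t_1-t_2|^{1/2}+|x_1-x_2|\bigr)^\alpha \quad\text{on}\quad I\times K,
\]
with $\alpha>0$ and $C$ independent of $n$: at each scale the contraction factor $\theta_{\rm P}$ depends only on the $\underline L^p,\underline L^q$-norms of $\omega,\omega^{-1}$ on the relevant spatial balls, which are uniformly bounded via the ergodic step.

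\textbf{Identification of the limit and conclusion.} For any $f\in C_c(\R^d)$ and $t>0$, Theorem~\ref{T} gives
\[
\int_{\R^d}u^n(t,x)\,f(x)\,dx=\mathbf E_0^\omega\bigl[f(X_{n^2t}/n)\bigr]+o(1)\xrightarrow[n\to\infty]{}\int_{\R^d}f(x)\,k_t(x)\,dx,
\]
so $u^n(t,\cdot)\,dx\rightharpoonup k_t\,dx$. Together with the uniform boundedness and equicontinuity of $\{u^n\}$ established above, Arzel\`a-Ascoli gives precompactness of $\{u^n\}$ in $C(I\times K)$; the weak convergence identifies every subsequential limit as $k_t(x)$, forcing uniform convergence of the whole sequence, which is \eqref{eq:claimlocallimit}. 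The principal obstacle is securing uniformity in $n$ of both the $L^\infty$-bound and the H\"older constants: this is precisely where Assumption~\ref{ass2} and Birkhoff's theorem intervene, providing simultaneous control of the $\underline L^p$- and $\underline L^q$-averages of $\omega,\omega^{-1}$ on the entire family of parabolic cylinders appearing in the dyadic iteration.
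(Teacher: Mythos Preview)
Your proposal is correct and assembles the same four ingredients the paper uses: ergodic control of $\|\omega\|_{\underline L^p}$ and $\|\omega^{-1}\|_{\underline L^q}$ on macroscopic balls (Lemma~\ref{L:ergodic}), the on-diagonal heat-kernel bound via a parabolic $L^1\!\to\! L^\infty$ estimate (this is Corollary~\ref{C:Tbound}/Proposition~\ref{C:boundheatkernel} of the present paper, not \cite{BS19b}, which treats only the elliptic case), the H\"older estimate from iterating Theorem~\ref{T:oscillation} (Proposition~\ref{P:holderheat}), and the quenched invariance principle (Theorem~\ref{T}). The only difference lies in the final step: the paper writes $n^d p^\omega_{n^2t}(0,\lfloor nx\rfloor)-k_t(x)=\sum_{i=1}^4 J_{i,n}(t,x,\delta)$, averaging over a small box of side $\delta$, and sends $n\to\infty$ then $\delta\to0$ (followed by a covering argument from \cite{CS19} for uniformity over $I\times K$), whereas you package boundedness plus equicontinuity into an Arzel\`a--Ascoli argument. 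Both routes are standard and equivalent; yours is arguably tidier, but be aware that $u^n(t,x)=n^d p^\omega_{n^2t}(0,\lfloor nx\rfloor)$ is piecewise constant in $x$ and hence not literally in $C(I\times K)$---the iteration of Theorem~\ref{T:oscillation} stops at microscopic scale $N/n$, so your H\"older estimate holds only up to an additive error $O((N/n)^\alpha)$, which is harmless in the limit but should be made explicit.
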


Assuming the stronger condition $\frac1p+\frac1q<\frac2d$ instead of~\eqref{eq:pq}, the conclusion of Theorem~\ref{T:Locallimit} was recently proved by Andres and Taylor \cite{AT19} (for related results in the continuum setting see~\cite{CD15}). Previously, Barlow and Hambly~\cite{BH09} gave general criteria for a local limit theorem to hold. These criteria were applied to uniformly elliptic conductances or supercritical i.i.d.\ percolation clusters; see \cite{CH09} for further generalizations. In~\cite{BKM15}, Boukhadra, Kumagai, and Mathieu identified sharp conditions on the tails of i.i.d.\ conductances at zero under which the parabolic Harnack inequality and the local limit theorem hold. % are give in \cite{BKM15}. 
An inspiring result for the present contribution is \cite{ADS16}, where the local limit theorem for the \textit{constant speed random walk} (CSRW) is proven under Assumption~\ref{ass} and Assumption~\ref{ass2} with \eqref{eq:pq} replaced by $\frac1p+\frac1q<\frac2d$, where the latter turns out be optimal in that case. \ER

We conclude this introduction by mentioning other related results: As mentioned above the quenched invariance principle in the form of Theorem~\ref{T}, for uniformly elliptic conductances (that is $p=q=\infty$) or on supercritical i.i.d.\ percolation clusters, was proven by Sidoravicius and Sznitman~\cite{SS04}. In the special case of i.i.d.\ conductances, that is when $\mathbb P$ is the product measure, which includes e.g.\ percolation models, building on the previous works \cite{BD10,BB07,BP07,M08,MP07}, Andres, Barlow, Deuschel, and Hambly~\cite{ABDH13} showed that the quenched invariance principle holds provided that $\mathbb P[\omega(\ee) > 0] > p_c$ with $p_c=p_c(d)$ being the bond percolation threshold.  In particular, due to independence of conductances such situation is very different as they do not require any moment conditions such as \eqref{ass:moment}. In the general ergodic situation, it is known that at least first moments of $\omega$ and $\omega^{-1}$ are necessary for a quenched invariance principle to hold (see \cite{BBT16}). 
Andres, Deuschel, and Slowik~\cite{ADS15} obtained the conclusion of Theorem~\ref{T} under more restrictive relation $\frac1p+\frac1q<\frac2d$, see also the very recent extension beyond the nearest-neighbor conductance models \cite{BK14}. For quenched invariance principles in dynamic environments, see \cite{ACDS18,BR18}, for a recent paper on local limit theorem, see~\cite{CS19}, as well as \cite{MO16} for related results. A quantitative quenched invariance principle (under quantified ergodicity assumptions) with degenerate conductances can be found in \cite{AN17}. Very recently, building on \cite{AD18}, an almost optimal quantitative local limit theorem in the percolation setting was proven by Dario and Gu~\cite{DG20}. Further results in the stationary \& ergodic setting under moment conditions include large-scale regularity \cite{BFO18}, homogenization in the sense of $\Gamma$-convergence \cite{NSS17}, or spectral homogenization \cite{FHS17}.
\ER

\subsection{Notation}\label{sec:notation}

\begin{itemize}
\item (Sets and $L^p$ spaces) For $y\in\mathbb Z^d$, $n\geq0$, we set $B(y,n):=y+([-n,n]\cap\mathbb Z)^d$ with the shorthand $B(n)=B(0,n)$. For any $S\subset \mathbb Z^d$ we denote by $S_{\mathbb B^d}\subset \mathbb B^d$ the set of bonds for which both end-points are contained in $S$, i.e.\ $S_{\mathbb B^d}:=\{\ee=\{\underline \ee,\overline \ee\}\in\mathbb B^d\,|\, \underline \ee,\overline\ee \in S\}$. For any $S\subset\mathbb Z^d$, we set $\partial S:=\{x\in S\;|\; \exists y\in\mathbb Z^d\setminus S\mbox{ s.t. } \{x,y\}\in\mathbb B^d\}$. Given $p\in(0,\infty)$, $S\subset \mathbb Z^d$, we set for any $f:\mathbb Z^d\to\R^d$ and $F:\mathbb B^d\to\R$ 
\begin{equation*}%\label{def:lp}
 \|f\|_{L^p(S)}:=\left(\sum_{x\in S}|f(x)|^p\right)^\frac1p,\quad \|F\|_{L^p(S_{\mathbb B^d})}:=\left(\sum_{\ee\in S_{\mathbb B^d}}|F(\ee)|^p\right)^\frac1p,
\end{equation*}
and $\|f\|_{L^\infty(S)}=\sup_{x\in S}|f(x)|$. Moreover, normalized versions of $\|\cdot||_{L^p}$ are defined for any finite subset $S\subset \mathbb Z^d$ and $p\in(0,\infty)$ by
\begin{equation*}%\label{def:lpnorm}
 \|f\|_{\underline L^p(S)}:=\left(\frac1{|S|}\sum_{x\in S}|f(x)|^p\right)^\frac1p,\quad \|F\|_{\underline L^p(S_{\mathbb B^d})}:=\left(\frac1{|S_{\mathbb B^d}|}\sum_{\ee\in S_{\mathbb B^d}}|F(\ee)|^p\right)^\frac1p,
\end{equation*}
where $|S|$ and $|S_{\mathbb B^d}|$ denote the cardinality of $S$ and $S_{\mathbb B^d}$, respectively. Throughout the paper we drop the subscript in $S_{\mathbb B^d}$ if the context is clear and \BR2 we set $\|\cdot\|_{\underline L^\infty(S)}:=\|\cdot\|_{ L^\infty(S)}$\ER. Moreover, %For $$, we set
\begin{equation}\label{def:m}
 \forall Q=I\times S\subset \R\times \mathbb Z^d\quad\mbox{we set}\quad{\rm m}(Q):=|I||S|,
\end{equation}
where $|I|$ denotes the Lebesgue measure of $I$ and $|S|$ the cardinality of $S$. %Throughout the next we use the following notation for parabolic cylinders: Fix $\tau>0$ and let $x_0\in \mathbb Z^d$, $t_0\in\mathbb R$, $n\geq1$ and $\sigma\in(0,1]$. Set
%
%\begin{equation}\label{def:Qtaun}
% Q_\sigma(\tau,n):=Q_\sigma(t_0,x_0,\tau,n):=[t_0-\sigma \tau n^2,t_0]\times B(x_0,\sigma n)\quad\mbox{and}\quad Q_\sigma(n):=Q_\sigma(1,n).
%\end{equation}
%

\item (discrete calculus) For any bond $\ee\in\mathbb B^d$, we denote by $\underline \ee,\overline \ee\in\mathbb Z^d$ the (unique) vertices satisfying $\ee=\{\underline \ee,\overline \ee\}$ and $\overline \ee - \underline \ee\in\{e_1,\dots,e_d\}$. For $f:\mathbb Z^d\to\R$, we define its \textit{discrete derivative} as
\begin{equation*}%\label{nabla}
 \nabla f:\mathbb B^d\to\R,\qquad \nabla f(\ee):=f(\overline \ee)-f(\underline \ee).
\end{equation*} 
For $f,g:\mathbb Z^d\to\R$ the following discrete product rule is valid
\begin{align}\label{chainrule}
 \nabla (fg)(\ee)=f(\overline \ee)\nabla g(\ee)+g(\underline \ee)\nabla f(\ee)=f(\ee)\nabla g(\ee)+g(\ee)\nabla f(\ee),
\end{align} 
where we use for the last equality the convenient identification of a function $h:\mathbb Z^d\to\R$ with the function $h:\mathbb B^d\to \R$ defined by the corresponding arithmetic mean
\begin{equation*}%\label{eq:h(e)}
 h(\ee):=\frac12 (h(\overline \ee)+h(\underline \ee)).
\end{equation*}
The \textit{discrete divergence} is defined for every $F:\mathbb B^d\to\R$ as
\begin{equation*}%\label{div}
\nabla^*F(x):=\sum_{\ee\in\mathbb B^d\atop \overline \ee=x}F(\ee)-\sum_{\ee\in\mathbb B^d\atop \underline \ee=x}F(\ee)=\sum_{i=1}^d\left(F(\{x-e_i,x\})-F(\{x,x+e_i\})\right).
\end{equation*}
Note that for every $f:\mathbb Z^d\to\R$ that is non-zero only on finitely many vertices and every $F:\mathbb B^d\to\R$ it holds
\begin{equation}\label{sumbyparts}
 \sum_{\ee\in\mathbb B^d}\nabla f(\ee)F(\ee)=\sum_{x\in\mathbb Z^d}f(x)\nabla^*F(x).
\end{equation}
Finally, we observe that the generator $\mathcal L^\omega$ defined in \eqref{def:Lomega} can be written as a second order finite-difference operator in divergence form, in particular
\begin{equation}\label{Lomegadiv}
\forall u:\mathbb Z^d\to\R\qquad \mathcal L^\omega u(x)=-\nabla^*(\omega\nabla u)(x)\quad\mbox{for all $x\in\mathbb Z^d$.}
\end{equation}
\item (Functions) For a function $u:I\times V\to\R$ with $I\subset \R$ and $V\subset\mathbb Z^d$, we denote by $u_t$ the function $u_t:V\to\R$ given by $u_t=u(t,\cdot)$. We call $u:I\times V\to \R$ caloric (subcaloric or supercaloric) in $Q=I\times V$ if 
\begin{equation*}
 (\partial_t-\mathcal L^\omega)u=0\quad(\mbox{$\leq$ or $\geq$}) \quad\mbox{in $Q$}.
\end{equation*}
Moreover, we call $u:\mathbb Z^d\to\R$ harmonic (subharmonic or superharmonic) in $V$ if 
\begin{equation*}
 -\mathcal L^\omega u=0\quad(\mbox{$\leq$ or $\geq$}) \quad\mbox{in $V$}.
\end{equation*}
\end{itemize}

\section{Parabolic regularity}
%

% \footnote{M: \BR3 delted
% In this section, we consider solution (or subsolution) of the equation
% %
% \begin{equation}
%  \partial_t u-\mathcal L^\omega u = 0.
% \end{equation}
% }
\subsection{Auxiliary Results}

We recall suitable versions of Sobolev inequality, see Proposition~\ref{T:sob}, and provide an optimization result, formulated in Lemma~\ref{L:optimcutoff} below, that is central in our proof of Theorem~\ref{T:oscillation}. 
\begin{proposition}[Sobolev inequalities]\label{T:sob}
 Fix $d\geq2$. For every $s\in[1,d)$ set $s_d^*:=\frac{ds}{d-s}$.
 \begin{enumerate}[(i)]
  \item For every $s\in[1,d)$ there exists $c=c(d,s)\in[1,\infty)$ such that for every $f:\mathbb Z^d\to\R$ it holds
 \begin{equation}\label{est:sobolev:bulk}
 \|f-(f)_{B(n)}\|_{L^{s_d^*}(B(n))}\leq c\|\nabla f\|_{L^s(B(n))},
 \end{equation}
 where $(f)_{B(n)}:=\frac1{|B(n)|}\sum_{x\in B(n)}f(x)$.
 \item For every $s\in[1,d-1)$ there exists $c=c(d,s)\in[1,\infty)$ such that for every $f:\mathbb Z^d\to\R$ it holds
 \begin{equation}\label{est:sobolev:sphere}
 \|f\|_{L^{s_{d-1}^*}(\partial B(n))}\leq c(\|\nabla f\|_{L^s(\partial B(n))}+n^{-1}\| f\|_{L^s(\partial B(n))}).
 \end{equation}
 \end{enumerate}
 \end{proposition}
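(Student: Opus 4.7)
The plan is to reduce both inequalities to their classical continuum counterparts by a lattice-to-continuum extension; this is a standard technique for transferring functional inequalities from $\R^d$ to $\mathbb Z^d$.

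For part (i), I would extend any $f:\mathbb Z^d\to\R$ to a function $\tilde f:\R^d\to\R$ by multilinear (Lagrange) interpolation on each unit cell $\prod_{i=1}^d[x_i,x_i+1]$ of the lattice. By construction, $\tilde f$ on each cell is a convex combination of the $2^d$ corner values of $f$, and its weak partial derivative in direction $e_i$ is a convex combination of the $2^{d-1}$ nearest-neighbor edge differences of $f$ parallel to $e_i$. A cell-by-cell estimate on the continuous cube $C\subset\R^d$ underlying $B(n)$ therefore yields
\[
\|\nabla\tilde f\|_{L^s(C)}\leq c\|\nabla f\|_{L^s(B(n))}\quad\text{and}\quad\|\tilde f-c_0\|_{L^{s_d^*}(C)}\geq c^{-1}\|f-c_0\|_{L^{s_d^*}(B(n))}\text{ for every }c_0\in\R,
\]
so applying the classical continuum Sobolev-Poincar\'e inequality on $C$ to $\tilde f$ gives (i), after estimating $\|f-(f)_{B(n)}\|_{L^{s_d^*}}$ by $\|f-(\tilde f)_C\|_{L^{s_d^*}}$ via $|(f)_{B(n)}-(\tilde f)_C|\leq c\|\nabla f\|_{L^s(B(n))}$.

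For part (ii), I would apply part (i) in dimension $d-1$. Decompose $\partial B(n)$ into its $2d$ faces $F_j$, each a translate of $([-n,n]\cap\mathbb Z)^{d-1}$. On every face, part (i) in dimension $d-1$ yields
\[
\|f-(f)_{F_j}\|_{L^{s_{d-1}^*}(F_j)}\leq c\|\nabla f\|_{L^s(F_j)},
\]
while H\"older's inequality gives
\[
|(f)_{F_j}|\cdot|F_j|^{1/s_{d-1}^*}\leq |F_j|^{1/s_{d-1}^*-1/s}\|f\|_{L^s(F_j)}\leq c\,n^{-1}\|f\|_{L^s(F_j)},
\]
where the last step uses $|F_j|\leq c\,n^{d-1}$ together with the identity $(d-1)(1/s_{d-1}^*-1/s)=-1$ (which follows directly from the definition of $s^*_{d-1}$). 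The triangle inequality then produces
\[
\|f\|_{L^{s_{d-1}^*}(F_j)}\leq c\|\nabla f\|_{L^s(F_j)}+c\,n^{-1}\|f\|_{L^s(F_j)}
\]
face by face; summing over $j=1,\dots,2d$ and noting that the tangential bonds of $F_j$ are contained in $(\partial B(n))_{\mathbb B^d}$, so $\|\nabla f\|_{L^s(F_j)}\leq \|\nabla f\|_{L^s(\partial B(n))}$, finishes the argument.

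The only real subtlety is the combinatorial bookkeeping on lower-dimensional strata: in (i), cell corners are shared by up to $2^d$ cells, while in (ii), the $(d-2)$-dimensional edges where adjacent faces meet are shared by up to $d$ faces, and bonds on them lie in at most two faces. As this only affects multiplicative constants, the proposition follows with $c=c(d,s)$ as claimed.
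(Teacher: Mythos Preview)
Your argument is correct. The paper does not actually prove this proposition; it simply remarks that the statements are standard and refers to \cite[Theorem~3]{BS19b} for the proof. Your multilinear-interpolation reduction for (i) and face-by-face reduction for (ii) are exactly the kind of standard arguments one expects here.

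One minor bookkeeping slip: your claim that bonds on lower-dimensional strata ``lie in at most two faces'' is only true for the $(d-2)$-dimensional edges where exactly two faces meet; on lower-dimensional strata (e.g.\ near corners in $d\geq 4$) a bond in direction $e_i$ can lie in up to $d-1$ faces simultaneously. As you yourself note, this only changes the multiplicative constant, so the conclusion stands with $c=c(d,s)$.
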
 
 \ER
Estimate~\eqref{est:sobolev:sphere} is the discrete analogue of the classical Sobolev inequality on the sphere, since the first and the third term measure $f$ on the boundary of the ball/cube (i.e. sphere) whereas the middle term measures $\nabla f$ within this set. The statements of Proposition~\ref{T:sob} are standard and the proof can be found e.g.\ in \cite[Theorem~3]{BS19b}.
 
\begin{lemma}\label{L:optimcutoff}
Fix $d\geq1$, $\rho,\sigma \in \mathbb N$ with $\rho<\sigma$ and $v:\mathbb Z^d \to[0,\infty)$. Consider
\begin{equation*}
\begin{split}
 J(\rho,\sigma,v):=\inf\bigg\{&\sum_{\ee \in \mathbb B^d}(\nabla \eta(\ee))^2 v(\ee) \;|\;\eta:\mathbb Z^d\to[0,1],\\
 &\quad \mbox{$\eta=1$ in $B(\rho)$ and $\eta=0$ in $\mathbb Z^d\setminus B(\sigma-1)$}\bigg\}.
 \end{split}
\end{equation*}
For every $\delta>0$ it holds
\begin{equation}\label{est:L:optimcutoff}
 J(\rho,\sigma,v)\leq (\sigma-\rho)^{-(1+\frac1\delta)}\biggl(\sum_{k=\rho}^{\sigma-1}\biggl(\sum_{\ee\in S(k)}v(\ee)\biggr)^\delta\biggr)^\frac1\delta,
 \end{equation}
where for every $m\in\mathbb N$
\begin{equation}\label{def:Sm}
S(m):=\{\ee\in\mathbb B^d\,|\,\underline \ee\in \partial B(m),\, \overline \ee \in \partial B(m+1)\}.
\end{equation}
\end{lemma}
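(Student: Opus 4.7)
My approach is to restrict the infimum to radial cutoffs, reducing the problem to a one-dimensional $\ell^2$ minimization that I then close with Cauchy--Schwarz and H\"older's inequality.

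\textbf{Radial reduction.} I would consider only cutoffs of the form $\eta(x)=f(|x|_\infty)$ with a nonincreasing $f:\{0,1,\dots\}\to[0,1]$ satisfying $f(k)=1$ for $k\le\rho$ and $f(k)=0$ for $k\ge\sigma$. Every bond whose two endpoints share the same $|\cdot|_\infty$-level contributes $\nabla\eta(\ee)=0$; each remaining bond is a radial bond between $\partial B(k)$ and $\partial B(k+1)$ for some $\rho\le k\le\sigma-1$, and on such a bond $|\nabla\eta(\ee)|=|f(k+1)-f(k)|$. Packaging the per-annulus contributions as $s_k:=\sum_{\ee\in S(k)}v(\ee)$ (reading $S(k)$ as the collection of radial bonds between the two spheres, and using the cubic symmetry of $B(n)$ to fold the opposite orientation into the same per-level sum) yields
\[
J(\rho,\sigma,v)\le\sum_{k=\rho}^{\sigma-1}(f(k+1)-f(k))^2\,s_k.
\]

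\textbf{1D optimization.} Writing $c_k:=f(k)-f(k+1)\ge 0$, the boundary conditions force $\sum_{k=\rho}^{\sigma-1}c_k=1$, and Cauchy--Schwarz gives
\[
1=\Big(\sum_k c_k s_k^{1/2}\cdot s_k^{-1/2}\Big)^{\!2}\le \Big(\sum_k c_k^2 s_k\Big)\Big(\sum_k s_k^{-1}\Big),
\]
so that $J(\rho,\sigma,v)\le\big(\sum_{k=\rho}^{\sigma-1}s_k^{-1}\big)^{-1}$, with the infimum attained at $c_k\propto s_k^{-1}$. (If $s_{k_0}=0$ for some $k_0$, then $c_k=\mathbf 1_{k=k_0}$ already gives $J=0$, matching the trivial right-hand side.)

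\textbf{H\"older to insert $\delta$.} To match \eqref{est:L:optimcutoff}, I apply H\"older to $\sigma-\rho=\sum_{k=\rho}^{\sigma-1} s_k^{-\delta/(\delta+1)}\cdot s_k^{\delta/(\delta+1)}$ with conjugate exponents $(\delta+1)/\delta$ and $\delta+1$ to obtain
\[
\sigma-\rho\le\Big(\sum_k s_k^{-1}\Big)^{\!\delta/(\delta+1)}\Big(\sum_k s_k^{\delta}\Big)^{\!1/(\delta+1)}.
\]
Raising this to the power $(\delta+1)/\delta=1+1/\delta$ and combining with the bound from the previous step delivers the claim. The only mildly subtle point in the whole argument is the orientation bookkeeping at each annulus --- matching the total radial contribution at level $k$ to the one-sided sum $\sum_{\ee\in S(k)}v(\ee)$; beyond that, the proof reduces to two one-line applications of standard inequalities.
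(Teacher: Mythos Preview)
Your proof is correct and follows essentially the same route as the paper: restrict to radial cutoffs $\eta(x)=\hat\varphi(|x|_\infty)$, solve the resulting one-dimensional minimization to obtain $J\le\bigl(\sum_k s_k^{-1}\bigr)^{-1}$ (you use Cauchy--Schwarz, the paper writes down the explicit minimizer $\hat\varphi$, which amounts to the same thing), and then apply H\"older to pass to the $\delta$-form. The orientation bookkeeping you flag as ``mildly subtle'' is indeed the one place where care is needed, and the paper treats it at the same level of detail you do.
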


\begin{proof}[Proof of Lemma~\ref{L:optimcutoff}]
Inequality \eqref{est:L:optimcutoff} was already proven in \cite[Step~1 of the proof of Lemma 1]{BS19b}. For convenience for the reader we recall the computations below.

Restricting the class of admissible cut-off functions to those of the form $\varphi(x)=\hat \varphi(\max_{i=1,\dots,d}\{|x\cdot e_i|\})$, we obtain
\begin{align}\label{1dmin1}
 J(\rho,\sigma,v)\leq& \inf\biggl\{\sum_{k=\rho}^{\sigma-1} (\hat \varphi'(k))^2\sum_{\ee\in S(k)} v(\ee) \;|\;\hat\varphi:\mathbb N\to[0,\infty),\,\hat\varphi(\rho)=1,\,\hat\varphi(\sigma)=0\biggr\}=:J_{\rm 1d},
\end{align}
where $\hat\varphi'(k):=\hat\varphi(k+1)-\hat\varphi(k)$. The minimization problem \eqref{1dmin1} can be solved explicitly. Indeed, set $f(k):=\sum_{\ee\in S(k)} v(\ee)$ for every $k\in\mathbb Z$ and suppose $f(k)>0$ for every $k\in\{\rho,\dots,\sigma-1\}$. Then, $\hat\varphi:\mathbb N\to[0,\infty)$ defined by $\hat\varphi(i):=1 \textrm{ for } i<\rho$, $\hat\varphi(i):=0 \textrm{ for } i> \sigma$, and 
$$\hat\varphi(i):=1-\left(\sum_{k=\rho}^{\sigma-1}f(k)^{-1}\right)^{-1}\sum_{k=\rho}^{i-1}f(k)^{-1}$$
for $i \in {\rho,\ldots,\sigma}$, is a valid competitor in the minimization problem for $J_{\rm 1d}$ and we obtain
\begin{equation}\label{est:L:optimcutoffdelta1}
 J(\rho,\sigma,v)\leq \biggl(\sum_{k=\rho}^{\sigma-1}\biggl(\sum_{\ee\in S(k)} v(\ee)\biggr)^{-1}\biggr)^{-1}.
\end{equation}
Inequality \eqref{est:L:optimcutoffdelta1} combined with an application of H\"older inequality yield the claimed inequality \eqref{est:L:optimcutoff}. Finally, in the case that $f(k)=\sum_{\ee\in S(k)} v(\ee)=0$ for some $k\in\{\rho,\dots,\sigma-1\}$, we easily obtain $J_{\rm 1d}=0$ and \eqref{est:L:optimcutoff} is trivially satisfied.
\end{proof}

\subsection{Local boundedness}

In this subsection, we establish local boundedness for non-negative subcaloric functions $u$. The results of this section, in particular Lemma~\ref{L:key} below, contain the main technical improvements compared to previous related results, e.g.\ \cite{CS19,AT19,CD15}. In principle, we follow the classical strategy of Moser to obtain the local boundedness. We recall that this strategy is based on (i) Caccioppoli inequalities for (powers of) $u$ (see \eqref{est:basic:sub} below), (ii) application of the Sobolev inequality, and (iii) an iteration argument. As in our previous works \cite{BS19a,BS19b} the improvement is mainly obtained by using certain optimized cut-off functions in the Caccioppoli inequality that allow (appealing to Lemma~\ref{L:optimcutoff}) to use Sobolev inequality on ``spheres'' instead of ``balls''. Unfortunately, the implementation of this strategy is technically much more involved in the parabolic case compared to the elliptic case treated in \cite{BS19a,BS19b}.\ER

 Throughout this section, we use the shorthand
\begin{equation}\label{def:h1}
 \|v\|_{\underline H^{1}(I\times B)}^2:=\|v\|_{\underline H^{1}(I\times B;\omega)}^2:=|B|^{-\frac2d}\|\omega^{-1}\|_{\underline L^q(B)}^{-1}\|v\|_{\underline L^2(I\times B)}^2+\|\sqrt\omega \nabla v||_{\underline L^2(I\times B)}^2. 
\end{equation}
%

% \begin{equation}
%  \delta_1:=\frac2{d-1}-\frac1p-\frac1q,\quad \delta_2:=\frac2d-\frac1q.
% \end{equation}
% \footnote{M: I exclude the endpoint cases $p=\infty$ or $q=\infty$. Since we are not sharp this is not a problem and we do not have to deal with special cases $d=3$ with $q=\infty$ and also $d=2$ with $p=q=\infty$ which yield $\nu=0$}
\begin{lemma}\label{L:key}
Fix $d\geq2$, $\omega\in\Omega$, $p\in(1,\infty)$ and $q\in(\frac{d}2,\infty)$ satisfying $\frac1p+\frac1q<\frac2{d-1}$. Let $\nu\in(0,1)$, $\gamma>2$ and $\theta>1$ be given by
%
%\begin{equation}\label{ass:theta}
% \theta=\begin{cases}p&\mbox{if $d=2$,}\\
% \theta\in(\frac13+\frac23p,p)\mbox{ arbitrary}&\mbox{if $d=3$ and $q=\infty$}\\
% 1+ p\delta_1&\mbox{if $d\geq4$ or $d=3$ and $q<\infty$}
%  \end{cases}
% \end{equation}
%%
%and set
%
\begin{equation}\label{def:nu}%\biggl(\frac2d-\frac1q\biggr)
 \nu:=1-\delta_2\biggl(1-\frac1\theta\biggr)\in(0,1),\quad \gamma:=2+\frac1p+\frac1{\theta q}\quad\mbox{with}\quad \theta:=\begin{cases}p&\mbox{if $d=2$,}\\
 1+ p\delta_1&\mbox{if $d\geq3$}
  \end{cases},
\end{equation}
where
\begin{equation}\label{def:delta}
 \delta_1:=\frac2{d-1}-\frac1p-\frac1q>0,\quad \delta_2:=\frac2d-\frac1q>0.
\end{equation}
Then there exist $c=c(d,p,q)\in[1,\infty)$ such that the following is true: Let $n,m\in\mathbb N$ with $n<m\leq 2n$ and $0<s_1<s_2$ be given and consider $I_1:=[-s_1,0]$ and $I_2:=[-s_2,0]$. Let $u\geq0$ be a subcaloric function in $I_2\times B(m)$. Then %it holds 
for every $\alpha\geq1$
\begin{align}
&\begin{aligned}
 &\sup_{t\in I_1}s_1^{-1}\|u_t^\alpha\|_{\underline L^{2}(B(n))}^2+\|u^\alpha\|_{\underline H^{1}(I_1\times B(n))}^2\\
  &\ \leq  c \alpha^2 \biggl(\sup_{t\in I_2}s_2^{-1}\|u_t^\alpha\|_{\underline L^2(B(m))}^2\biggr)^{1-\nu} s_2^{1-\nu}\|\omega^{-1}\|_{\underline L^q(B(m))}\biggl(\frac{\|\omega\|_{\underline L^p(B(m))}}{(1-\tfrac{n}m)^{\gamma}}+\frac{m^2}{s_2-s_1}\biggr)\frac{s_2}{s_1}\|u^{\alpha\nu}\|_{\underline H^{1}(I_2\times B(m))}^2;\label{est:keysubsol}
\end{aligned}  
\\[10pt]
&\begin{aligned}
  &\sup_{t\in I_1}s_1^{-1}\|u_t^\alpha\|_{\underline L^{2}(B(n))}^2+\|\sqrt{\omega}\nabla (u^\alpha)\|_{\underline L^2(I_1\times B(n))}^2\\
&\ \leq \frac{c\alpha^2}{m^2}\biggl(\frac{\|\omega\|_{\underline L^p(B(m))}}{(1-\frac{n}{m})^2}+\frac{m^2}{s_2-s_1}\biggr) \frac{s_2}{s_1}\fint_{I_2}\|u_t^\alpha\|_{\underline L^{\frac{2p}{p-1}}( B(m))}^2\dt;\label{est:keysubsol2}
\end{aligned}
\\[10pt]
&\sup_{t\in I_1}s_1^{-1}\|u_t^\alpha\|_{\underline L^{2}(B(n))}^2\leq c\alpha^2 \|\omega^{-1}\|_{\underline L^q(B(m))}\biggl(\frac{\|\omega\|_{\underline L^p(B(m))}}{(1-\tfrac{n}m)^{2+\frac1p+\frac1q}}+\frac{m^2}{s_2-s_1}\biggr) \frac{s_2}{s_1} \|u^{\alpha}\|_{\underline H^{1}(I_2\times B(m))}^2.\label{est:keysubsol3}
\end{align}
\end{lemma}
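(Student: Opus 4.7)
The three estimates share a common starting point—a parabolic Caccioppoli identity—and differ only in how the resulting right-hand side is processed using Lemma~\ref{L:optimcutoff} and Proposition~\ref{T:sob}. In all three cases the improvement over previous work comes from replacing the ball-Sobolev inequality by the sphere-Sobolev inequality \eqref{est:sobolev:sphere} on the annular decomposition provided by the near-optimal cutoff of Lemma~\ref{L:optimcutoff}.

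\textbf{Step 1 (energy identity).} I pick a spatial cutoff $\eta:\mathbb Z^d\to[0,1]$ supported in $B(m)$ with $\eta\equiv 1$ on $B(n)$, and a time cutoff $\chi:\R\to[0,1]$ equal to $1$ on $I_1$, vanishing outside $I_2$, with $|\chi'|\le 2(s_2-s_1)^{-1}$. Testing the subcaloric inequality against $\eta^2\chi^2 u^{2\alpha-1}$ and using the discrete product rule \eqref{chainrule} together with the elementary bound $(a^\alpha-b^\alpha)^2\le c\alpha^2 (a^{2\alpha-1}-b^{2\alpha-1})(a-b)$ for $a,b\ge 0$, $\alpha\ge 1$, yields
\begin{equation}\label{eq:Cacsketch}
\sup_{t\in I_1}\|u_t^\alpha\|_{L^2(B(n))}^2+\frac{1}{\alpha^2}\|\sqrt{\omega}\,\nabla u^\alpha\|_{L^2(I_1\times B(n))}^2\lesssim \alpha^2\int_{I_2}\sum_{\ee}\omega(\ee)(\nabla\eta(\ee))^2 u_t^{2\alpha}(\ee)\dt+\frac{\|u^{2\alpha}\|_{L^1(I_2\times B(m))}}{s_2-s_1}.
\end{equation}

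\textbf{Step 2 (\eqref{est:keysubsol2} and \eqref{est:keysubsol3}).} For \eqref{est:keysubsol2}, take $\eta$ linear in $\mathrm{dist}_\infty(\cdot,B(n))$, so $|\nabla\eta|\le(m-n)^{-1}$, and apply Hölder in space at fixed time with exponents $(p,p/(p-1))$. This extracts $\|\omega\|_{\underline L^p(B(m))}$ and leaves $\|u_t^\alpha\|_{\underline L^{2p/(p-1)}(B(m))}^2$; normalizing produces \eqref{est:keysubsol2}. For \eqref{est:keysubsol3}, replace the linear cutoff by the near-optimal $\eta$ from Lemma~\ref{L:optimcutoff} applied with $\delta=p$ to $v=\omega$, so that $(\nabla\eta)^2$ is essentially constant on each annulus $S(k)$. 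On $S(k)$, Hölder with $(p,p/(p-1))$ separates $\omega$ from $u^{2\alpha}$; then \eqref{est:sobolev:sphere} with $s$ chosen so that $s^\ast_{d-1}=2p/(p-1)$ upgrades $\|u^\alpha\|_{L^{2p/(p-1)}(\partial B(k))}^2$ to $\|\nabla u^\alpha\|_{L^s(\partial B(k))}^2+k^{-2}\|u^\alpha\|_{L^s(\partial B(k))}^2$. A further Hölder with $\omega^{-1}\in \underline L^q$ converts the $L^s$-norms into $\omega$-weighted $L^2$-pieces that reassemble, after summation over $k$, into $\|u^\alpha\|_{\underline H^1(I_2\times B(m))}^2$. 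A short calculation shows that the hypothesis $\frac 1p+\frac 1q<\frac{2}{d-1}$ is exactly what forces $s<d-1$, i.e.\ the admissible range of \eqref{est:sobolev:sphere}; the prefactor $(1-n/m)^{-(2+1/p+1/q)}$ of \eqref{est:keysubsol3} then arises from Lemma~\ref{L:optimcutoff} and volume normalizations between spheres and balls.

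\textbf{Step 3 (\eqref{est:keysubsol} and main obstacle).} The interpolative bound is obtained by inserting a nonlinear splitting $u^{2\alpha}=u^{2\alpha(1-\nu)}\cdot u^{2\alpha\nu}$ into the spatial integrand of Step~2 and bounding the first factor against the $\sup_t$-term on the left by paying a $(1-\nu)$-power of $\sup_{t}\|u_t^\alpha\|_{\underline L^2(B(m))}^2$. The residual $u^{2\alpha\nu}$ is then processed as in Step~2, but now with three Hölder slots $(p,\theta q,(1-\nu)^{-1})$ and a sphere-Sobolev exponent tuned so that the output reassembles into $\|u^{\alpha\nu}\|_{\underline H^{1}(I_2\times B(m))}^2$ rather than into a mismatched norm. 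The explicit parameters $\theta,\nu,\gamma$ in \eqref{def:nu}--\eqref{def:delta} are the unique simultaneous solution of (a) the Hölder triple summing to $1$, (b) the middle Sobolev exponent lying strictly below $d-1$ (this is where $\delta_1,\delta_2>0$ enter), and (c) an algebraic closure condition identifying the output with $\|u^{\alpha\nu}\|_{\underline H^1}^2$. The main obstacle is precisely this simultaneous matching: every piece of the chain is sharp, and the strict hypothesis $\frac 1p+\frac 1q<\frac{2}{d-1}$ is exactly what makes all three requirements compatible, thereby relaxing the bound $\frac 1p+\frac 1q<\frac{2}{d}$ used in prior parabolic works.
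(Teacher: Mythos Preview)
Your Steps~1--2 are essentially the paper's Steps~1--2 and the simpler version of its Substep~3.1: Caccioppoli, a linear cutoff for \eqref{est:keysubsol2}, and the radial cutoff of Lemma~\ref{L:optimcutoff} together with the sphere Sobolev inequality \eqref{est:sobolev:sphere} for \eqref{est:keysubsol3}. Minor quibble: in Lemma~\ref{L:optimcutoff} the weight is $v(\ee)=\omega(\ee)\int_{I_2}u_t^{2\alpha}(\ee)\dt$, not $v=\omega$, and the paper takes $\delta=(1+\tfrac1p+\tfrac1q)^{-1}$, not $\delta=p$; but your verbal description (H\"older on each $S(k)$, then sphere Sobolev, then sum in $k$) is the correct mechanism and produces the exponent $2+\tfrac1p+\tfrac1q$.

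The genuine gap is in Step~3. Your proposed route---write $u^{2\alpha}=u^{2\alpha(1-\nu)}\cdot u^{2\alpha\nu}$ and ``bound the first factor against the $\sup_t$-term''---does not work as stated: $u^{2\alpha(1-\nu)}(\ee)$ is a pointwise quantity and cannot be majorized by the spatial average $\bigl(\sup_t\|u_t^\alpha\|_{\underline L^2(B(m))}^2\bigr)^{1-\nu}$ without an intermediate H\"older step, and once you do that H\"older on a sphere $\partial B(k)$ you are left with $\|u_t^\alpha\|_{L^2(\partial B(k))}$, which has no reason to be controlled by $\|u_t^\alpha\|_{L^2(B(m))}$ for a fixed $k$ and $t$. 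The paper isolates exactly this issue: because the cutoff must be time-independent, one cannot peel off the $\sup_t$ factor on the spheres directly. Instead the argument proceeds in two stages. First (Substep~3.1), on each $\partial B(k)$ one interpolates
\[
\|u_t^\alpha\|_{L^{2p'}(\partial B(k))}^2 \le \|u_t^{\alpha\nu}\|_{L^{\frac{2p}{p-\theta}}(\partial B(k))}^{2/\theta}\,\|u_t^\alpha\|_{L^{2(1+\e)}(\partial B(k))}^{2(\theta-\nu)/\theta},
\]
and then applies H\"older \emph{in time} with exponents $(\theta,\tfrac{\theta}{\theta-1})$; the first factor is handled by the sphere Sobolev inequality applied to $u^{\alpha\nu}$, while the second is a leftover $\int_{I_2}\|u_t^\alpha\|_{L^{2(1+\e)}(B(m))}^{2(1+\e)}\dt$. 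Second (Substep~3.2), this leftover is interpolated again,
\[
\|u_t^\alpha\|_{L^{2(1+\e)}}^{2(1+\e)}\le \|u_t^\alpha\|_{L^2}^{2\delta_2}\,\|u_t^{\alpha\nu}\|_{L^Q}^2,
\]
and only now does the $\sup_t$ factor appear, via $\|u_t^\alpha\|_{L^2}^{2\delta_2}\le\bigl(\sup_t\|u_t^\alpha\|_{\underline L^2}^2\bigr)^{\delta_2}$, while the $L^Q$ piece is absorbed into $\|u^{\alpha\nu}\|_{\underline H^1}^2$ by the \emph{bulk} Sobolev inequality \eqref{est:sobolev:bulk}. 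The identity $\delta_2(1-\tfrac1\theta)=1-\nu$ is what makes the exponents close. In particular the proof uses \emph{both} \eqref{est:sobolev:sphere} and \eqref{est:sobolev:bulk}, and a H\"older step in time; your sketch invokes only the sphere inequality and a single spatial splitting, and the triple ``$(p,\theta q,(1-\nu)^{-1})$'' does not match any of the H\"older exponents that actually arise.
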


The main achievement of Lemma~\ref{L:key} is estimate~\eqref{est:keysubsol}, where at the expense of increasing the domain of integration we control $u^\alpha$ in terms of $u^{\nu\alpha}$ with $\nu < 1$. The factor on the right-hand side involving norm of $u^\alpha$ (and not $u^{\nu\alpha}$) to a small power will be dealt with later. The other two estimates~\eqref{est:keysubsol2} and~\eqref{est:keysubsol3} do not include improvement of integrability, and their proofs are significantly simpler. 

%\begin{remark}
%Estimate \eqref{est:keysubsol} contains the crucial improvement: we can estimate the energy of $u$can be compared with \cite[Lemma 2]{Moser64} in the uniformly elliptic casecontains the crucial improvement in the sense that 
%\end{remark}

\begin{proof}[Proof of Lemma~\ref{L:key}] \PfStart{lm2}
Throughout the proof we write $\lesssim$ if $\leq$ holds up to a positive constant that depends only on $d$, $p$, and $q$. We introduce,
\begin{align*}
 \mathcal A(n,m):=&\{\eta:\mathbb Z^d\to[0,1]\,|\, \eta=1\,\mbox{in $B(n)$ and}\, \eta=0\,\mbox{in $\mathbb Z^d\setminus B(m-1)$}\}.
\end{align*}

\PfStep{lm2}{} We claim that there exists $c=c(d)\in[1,\infty)$ such that for all $\eta\in \mathcal A(n,m)$ and $\alpha\geq1$,
\begin{equation}
\begin{aligned}\label{est:basic:sub}
&\sup_{t\in I_1}\|u_t^\alpha\|_{\underline L^2(B(n))}^2+\int_{I_1}\|\sqrt\omega\nabla (u_t^\alpha)\|_{\underline L^2(B(n))}^2\dt\\
&\leq \frac{c\alpha^2}{|B(m)|}\int_{I_2}\sum_{\ee \in\mathbb B^d}\omega(\ee)(\nabla \eta(\ee))^2(u_t^{\alpha}(\ee))^2\dt+\frac{c}{s_2-s_1}\int_{I_2}\|u_t^\alpha\|_{\underline L^2(B(m))}^2\dt,
\end{aligned}
\end{equation}
where we recall the notation $f(\ee)=\frac12(f(\bar \ee)+f(\underbar \ee))$.  This is a discrete parabolic version of classical Caccioppoli inequality, and is obtained by simply testing the equation with $\eta^2 u$ with $\eta$ being a cutoff-function in space, combined with Cauchy-Schwarz inequality and integration in time.

Since $u\geq0$ is subcaloric, we obtain by the chain rule and estimate \eqref{est:d:chain:2}
\begin{align*}
 \frac1{2\alpha}\frac{\mathrm d}{\mathrm dt}\sum_{x\in\mathbb Z^d}\eta^2(x)u_t^{2\alpha}(x)=&\sum_{x\in\mathbb Z^d}\eta^2(x)u_t^{2\alpha-1}(x)\frac{\mathrm d}{\mathrm dt}u_t(x)\\
 \leq& -\sum_{\ee \in\mathbb B^d}\nabla (\eta^2u_t^{2\alpha-1})(\ee)\omega(\ee)\nabla u_t(\ee)\\
 \leq&-\sum_{\ee \in\mathbb B^d}2\eta(\ee)\nabla \eta(\ee)\omega(\ee)u_t^{2\alpha-1}(\ee)\nabla u_t(\ee)-\frac{2\alpha-1}{\alpha^2}\sum_{\ee \in\mathbb B^d}\eta^2(\ee)\omega(\ee)(\nabla u_t^\alpha(\ee))^2
\end{align*}
and thus
%
% \begin{eqnarray*}
%  & &\frac12\frac{d}{dt}\sum_{x\in\mathbb Z^d}\eta^2(x)u_t^{2\alpha}(x)+\frac{2\alpha-1}{\alpha}\sum_{\ee \in\mathbb B^d}\eta^2(\ee)\omega(\ee)(\nabla u_t^\alpha(\ee))^2\notag\\
%  &\leq& 2\alpha\sum_{\ee \in\mathbb B^d}\eta(\ee)|\nabla \eta(\ee)|\omega(\ee)u_t^{2\alpha-1}(\ee)|\nabla u_t(\ee)|\notag\\
%  & \stackrel{\eqref{est:d:chain:3}}{\leq}& 16\alpha\sum_{\ee \in\mathbb B^d}\eta(\ee)|\nabla\eta(\ee)| u_t^\alpha(\ee)|\nabla u^\alpha(\ee)|\notag\\
% & \leq& \frac{2\alpha-1}{2\alpha}\sum_{\ee \in\mathbb B^d}\eta^2(\ee)(\nabla u_t^\alpha(\ee))^2+\frac{128\alpha^3}{2\alpha-1}\sum_{\ee}\omega(\ee)(u_t^\alpha(\ee))^2(\nabla \eta(\ee))^2,
% \end{eqnarray*}
\begin{align*}
 & \frac12\frac{\mathrm d }{\mathrm dt}\sum_{x\in\mathbb Z^d}\eta^2(x)u_t^{2\alpha}(x)+\frac{2\alpha-1}{\alpha}\sum_{\ee \in\mathbb B^d}\eta^2(\ee)\omega(\ee)(\nabla u_t^\alpha(\ee))^2\notag\\
 &\uoset{\eqref{est:d:chain:3}}{}{\leq} 2\alpha\sum_{\ee \in\mathbb B^d}\eta(\ee)|\nabla \eta(\ee)|\omega(\ee)u_t^{2\alpha-1}(\ee)|\nabla u_t(\ee)|\notag\\
 & \stackrel{\eqref{est:d:chain:3}}{\leq}  2 \alpha\sum_{\ee \in\mathbb B^d}\eta(\ee)|\nabla\eta(\ee)| \omega(\ee)  u_t^\alpha(\ee)|\nabla u_t^\alpha(\ee)|\notag\\
& \uoset{\eqref{est:d:chain:3}}{}{\leq} \frac{2\alpha-1}{2\alpha}\sum_{\ee \in\mathbb B^d}\eta^2(\ee) \omega(\ee) (\nabla u_t^\alpha(\ee))^2+\frac{2\alpha^3}{2\alpha-1}\sum_{\ee \in \mathbb B^d}\omega(\ee)(u_t^\alpha(\ee))^2(\nabla \eta(\ee))^2,
\end{align*}

where we use in the last estimate Youngs inequality in the form $ab\leq \frac12(\e a^2+\frac1\e b^2)$ with $\e=\frac{2\alpha-1}{2\alpha^2}$. Combining the previous two displays, we obtain
\begin{align}\label{eq:basic:subp1}
 &\frac{\mathrm d}{\mathrm dt}\sum_{x\in\mathbb Z^d}\eta^2(x)u_t^{2\alpha}(x)+\sum_{\ee \in\mathbb B^d}\eta^2(\ee)\omega(\ee)(\nabla u_t^\alpha(\ee))^2 \leq 4 \alpha^2\sum_{\ee\in\mathbb B^d}\omega(\ee)(u_t^\alpha(\ee))^2(\nabla \eta(\ee))^2.
\end{align}
Multiplying \eqref{eq:basic:subp1} with the piecewise smooth function $\zeta$ given by
\begin{equation*}
 \zeta(t):=\begin{cases}0&\mbox{if $t\leq -s_2$}\\ \frac{t+s_2}{s_2-s_1}&\mbox{if $t\in(-s_2,-s_1)$}\\ 1&\mbox{if $t\geq -s_1$}\end{cases}
\end{equation*}
and integrating in time, we obtain \eqref{est:basic:sub} (using $n\leq m\leq2n$ and thus $|B(n)|\lesssim |B(m)|\lesssim |B(n)|$).

\PfStep{lm2}{} Proof of estimate \eqref{est:keysubsol2}. This follows directly from \eqref{est:basic:sub} with $\eta\in \mathcal A(n,m)$ satisfying $|\nabla \eta|\leq 2(m-n)^{-1}$ and H\"older inequality. Indeed, for such a choice of $\eta$ one gets
\begin{align*}%\label{est:basic:sub}
&\sup_{t\in I_1}\|u_t^\alpha\|_{\underline L^2(B(n))}^2+\int_{I_1}\|\sqrt\omega\nabla (u_t^\alpha)\|_{\underline L^2(B(n))}^2\dt\\
&\uoset{\textrm{Jensen}}{\eqref{est:basic:sub}}{\leq} \frac{2c\alpha^2}{|B(m)|(m-n)^2}\int_{I_2}\sum_{\ee \in B(m)}\omega(\ee)(u_t^{\alpha}(\ee))^2\dt+\frac{c}{s_2-s_1}\int_{I_2}\|u_t^\alpha\|_{\underline L^2(B(m))}^2\dt
\\
&\uoset{}{\textrm{H\"older}}{\lesssim} \frac{c\alpha^2\|\omega\|_{\underline L^p(B(m))}}{(m-n)^2} \int_{I_2} \|u_t^\alpha\|_{\underline L^{\frac{2p}{p-1}}(B(m))}^2 \dt+\frac{c}{s_2-s_1}\int_{I_2}\|u_t^\alpha\|_{\underline L^2(B(m))}^2\dt\\
&\uoset{}{\textrm{Jensen}}{\le} \biggl( \frac{c\alpha^2\|\omega\|_{\underline L^p(B(m))}}{(m-n)^2} + \frac{c}{s_2 - s_1}\biggr) \int_{I_2} \|u_t^\alpha\|_{\underline L^{\frac{2p}{p-1}}(B(m))}^2 \dt.
\end{align*}
Using $\alpha \ge 1$ and taking time averages yields~\eqref{est:keysubsol2}.

\PfStep{lm2}{lm2:step2} We claim that there exists $c=c(d,p,q)\in[1,\infty)$ such that
\begin{equation}\label{L1:estkey}
\begin{aligned}
&\min_{\eta\in\mathcal A(n,m)}|B(m)|^{-1}\int_{I_2}\sum_{\ee\in\mathbb B^d} \omega(\ee) (\nabla\eta(\ee))^2(u_t^\alpha(\ee))^2\dt
\\
&\quad \le c\biggl(\sup_{t\in I_2}\|u^\alpha\|_{\underline L^2(B(m))}^2\biggr)^{1-\nu}\|\omega\|_{\underline L^p(B(m))}\|\omega^{-1}\|_{\underline L^q(B(m))}\frac{s_2}{(1-\tfrac{n}m)^{\gamma}}\|u^{\alpha\nu}\|_{\underline H^{1}(I_2\times B(m))}^2.
\end{aligned}
\end{equation}
As in the elliptic case, see \cite[proof of Theorem~4]{BS19b}, the idea is to optimize the cutoff $\eta$ in \eqref{est:basic:sub} via Lemma~\ref{L:optimcutoff} to get spherical averages of $u$ on the right-hand side. Since we do not have good control of time derivatives, the cutoff should be time-independent, hence providing improved integrability for the averages over spheres and \emph{in time}. To ``move'' the time-integral outside we first sacrifice bit of space and time integrability (see Substep 3.1), but which is then dealt with using $L^2$ control of $u$, uniform in time (see Substep 3.2) - which then gives rise to the first term on the right-hand side of~\eqref{L1:estkey}. 
\ER

%\stepcounter{lmsub}\PfSubstep{\thelmsub}{lm2sub31}
\substep{\ref{lm2:step2}.1}
%\substep{3.1}
%
 We claim that there exists $c_1=c_1(d,p,q)\in[1,\infty)$ such that
\begin{equation}
\begin{aligned}\label{est:L:key:s2:claima}
&\min_{\eta\in\mathcal A(n,m)}\int_{I_2}\sum_{\ee\in\mathbb B^d}\omega(\ee)(\nabla \eta(\ee))^{2}(u_t^{\alpha}(\ee))^{2}\dt \\
&\quad \le c_1\frac{\|\omega\|_{\underline L^p(B(m))}(\|\omega^{-1}\|_{\underline L^q(B(m))}|B(m)|s_2\|u^{\alpha\nu}\|_{\underline H^{1}(I_2\times B(m))}^2)^\frac1\theta}{\left(1-\frac{n}m\right)^{2+\frac1p+\frac1{\theta q}}m^{2(1-\frac1\theta)}} \biggl(\int_{I_2}\|u_t^{\alpha}\|_{L^{2(1+\e)}(B(m))}^{2(1+\e)}\dt\biggr)^{1-\frac1\theta},
%\leq&c\left(1-\frac{n}m\right)^{-(\theta(2+\frac1p)+\frac1q)}m^{2(1-\theta)}\|\omega\|_{\underline L^p(B(m))}^\theta\|\omega^{-1}\|_{\underline L^q(B(m))}|B(m)|s_2\|u_t^{\alpha\nu}\|_{\underline H^{1}(I_2\times B(m))}^2.
\end{aligned}
\end{equation}
where $\theta$, $\nu$ are given in \eqref{def:nu} and
\begin{equation}\label{def:eps}
 \e:=\frac{\delta_2}\theta=\nu-1+\delta_2  > 0.
\end{equation}
Along the proof we also obtain a simpler version (with $\theta=1$) of this inequality:
\begin{equation}
\begin{aligned}\label{est:L:key:s2:claima:simpler}
&\min_{\eta\in\mathcal A(n,m)}\int_{I_2}\sum_{\ee\in\mathbb B^d}\omega(\ee)(\nabla \eta(\ee))^{2}(u_t^{\alpha}(\ee))^{2}\dt \\
&\quad \le c(d,p,q) \frac{\|\omega\|_{\underline L^p(B(m))} \|\omega^{-1}\|_{\underline L^q(B(m))}|B(m)|s_2}{\left(1-\frac{n}m\right)^{2+\frac1p+\frac1q}}\|u^{\alpha}\|_{\underline H^{1}(I_2\times B(m))}^2.
\end{aligned}
\end{equation}

Using Lemma~\ref{L:optimcutoff} with $v(\ee)=\omega(\ee)\int_{I_2}(u_t^\alpha(\ee))^{2}\dt$ we have for every $\delta\in(0,1]$
\begin{align}\label{ineq:caccioptim}
&\min_{\eta\in\mathcal A(m,n)}\int_{I_2}\sum_{\ee\in\mathbb B^d}\omega(\ee)|\nabla \eta(\ee)|^{2}(u_t^\alpha(\ee))^{2}\dt\leq (m-n)^{-(1+\frac1\delta)}\biggl(\sum_{k=n}^{m-1} \biggl(\int_{I_2}\sum_{\ee\in S(k)}\omega(\ee) (u_t^\alpha(\ee))^{2}\dt\biggr)^\delta\biggr)^\frac1\delta,
\end{align}
where $S(k)$ for every $k\in\mathbb N$ is defined in \eqref{def:Sm}. H\"older inequality yields for every $t\in I_2$ and $k\in\{n,\dots,m-1\}$
\begin{align*}
\sum_{\ee \in S(k)} \omega(\ee)(u_t^\alpha(\ee))^{2}\leq& \biggl(\sum_{\ee \in S(k)}\omega(\ee)^p\biggr)^\frac{1}{p}\biggl(\sum_{\ee\in S(k)}(u_t^\alpha(\ee))^{\frac{2p}{p-1}}\biggr)^\frac{p-1}p\\
\lesssim& \biggl(\sum_{\ee \in S(k)}\omega(\ee)^p\biggr)^\frac{1}{p}\biggl(\|u_t^{\alpha}\|_{L^\frac{2p}{p-1}(\partial B(k))}^2+\|u_t^{\alpha}\|_{L^\frac{2p}{p-1}(\partial B(k+1))}^2\biggr).
\end{align*}
Note that the choices for $\theta$ and $\nu$ (see \eqref{def:nu}) yield
\begin{equation}\label{ineq:interpolationrange}
 0<\frac{\theta-\nu}{\theta-1}<\frac{p}{p-1}<\frac{p\nu}{p-\theta}\leq\infty
\end{equation}
(with the understanding $\frac1{0}=\infty$ in the case $d=2$). The inequalities \eqref{ineq:interpolationrange} follow by elementary computations which we provide for the readers convenience in  Substep~3.4 below.

%Indeed, for $d=2$ (and thus $\theta=p$ with $0<\nu<1$) inequalities \eqref{ineq:interpolationrange} are  trivial. Let us verify them in the case $d\geq3$. The first and the last inequality in \eqref{ineq:interpolationrange} directly follow from $0<\nu<1<\theta<p$ (for $d\geq3$). Next, we observe that
%%
%\begin{align*}
% &\frac{p}{p-1}-\frac{\theta-\nu}{\theta-1}=\frac1{p-1}-\frac{1-\nu}{\theta-1}\stackrel{\eqref{def:nu}}{=}\frac1{p-1}-\frac{\delta_2}\theta=\frac{\theta-\delta_2(p-1)}{(p-1)\theta}\\
% &\frac{p\nu}{p-\theta}-\frac{p}{p-1}=\frac{p}{p-\theta}\biggl(\nu-\frac{p-\theta}{p-1}\biggr)\stackrel{\eqref{def:nu}}{=}\frac{p}{p-\theta}\biggl(-\delta_2\frac{\theta-1}\theta-\frac{1-\theta}{p-1}\biggr)=\frac{p(\theta-1)}{(p-\theta)(p-1)\theta}(\theta-\delta_2(p-1))
%\end{align*}
%%
%and thus the second and third inequality in \eqref{ineq:interpolationrange} are equivalent to $\theta-\delta_2(p-1)$. In the cases $d\geq4$ or $d=3$ with $q<\infty$, we have
%%
%\begin{align*}
% \theta-\delta_2(p-1)\stackrel{\eqref{ass:theta}\eqref{def:delta}}{=}\frac{2p}{d-1}-\frac{p}q-(\frac2d-\frac1q)(p-1)=\frac2d-\frac1q+2p(\frac1{d-1}-\frac1d)\geq \frac2d-\frac1q>0
%\end{align*}
%%
%and in the case $d=3$ and $q=\infty$ the choice of $\theta$ ensures $\theta>\delta_2(p-1)=\frac23(p-1)$. Hence, the inequalities \eqref{ineq:interpolationrange} are proven.
\smallskip

Appealing to \eqref{ineq:interpolationrange} we have the following interpolation inequality 
\begin{equation}\nonumber
 \|u_t^{\alpha}\|_{L^\frac{2p}{p-1}(\partial B(k))}^2\leq \|u_t^{\alpha}\|_{L^\frac{2p\nu}{p-\theta}(\partial B(k))}^\frac{2\nu}\theta\|u_t^{\alpha}\|_{L^\frac{2(\theta-\nu )}{\theta-1}(\partial B(k))}^{2\frac{\theta-\nu}\theta}
\end{equation}
(note that $\frac{p-\theta}{p\nu}\frac{\nu}{\theta}+\frac{\theta-1}{\theta-\nu}\frac{\theta-\nu}{\theta}=1-\frac1p$) and thus by H\"older inequality in time (with exponents $\theta,\frac{\theta}{\theta-1}$), we obtain
\begin{equation}
\begin{aligned}\label{est:lemmakeysobolevsphere}
&\int_{I_2}\sum_{\ee\in S(k)}\omega(\ee) (u_t^\alpha(\ee))^{2}\dt\\
&\lesssim \biggl(\sum_{\ee \in S(k)}\omega(\ee)^p\biggr)^\frac{1}{p}\biggl(\sum_{i=k}^{k+1}\int_{I_2}\|u_t^{\alpha\nu}\|_{L^\frac{2p}{p-\theta}(\partial B(i))}^2\dt\biggr)^\frac1\theta\biggl(\sum_{i=k}^{k+1}\int_{I_2}\|u_t^{\alpha}\|_{L^{2(1+\e)}(\partial B(i))}^{2(1+\e)}\dt\biggr)^{1-\frac1\theta},%\|u_t^{\alpha}\|_{L^\frac{2(\theta-\nu )}{\theta-1}(\partial B(i))}^{\frac{2(\theta-\nu)}{\theta-1}}
\end{aligned}
\end{equation}
where we use
 \begin{equation*}
  \frac{\theta-\nu}{\theta-1}=1+\frac{1-\nu}{\theta-1}\stackrel{\eqref{def:nu}}{=} 1 + \frac{\delta_2}{\theta} \stackrel{\eqref{def:eps}}{=} 1+\e.
 \end{equation*}
We estimate the second factor on the right-hand side in \eqref{est:lemmakeysobolevsphere} by Sobolev inequality: Let $p_*\in[1,2)$ be defined by
$$
\frac1{p_*}=\frac1{d-1}+ \frac{p-\theta}{2p}.
$$
Then a combination of Sobolev and H\"older inequality yield
\begin{equation}
\begin{aligned}\label{est:lemmakeysobolevsphere1}
\|u_t^{\alpha\nu}\|_{L^\frac{2p}{p-\theta}(\partial B(k))}^{2}%&=&\|u_t^{\alpha\nu}\|_{L^\frac{2p}{p-\theta}(\partial B(k))}^2\\
&\stackrel{\eqref{est:sobolev:sphere}}{\lesssim} (\|\nabla (u_t^{\alpha\nu})\|_{L^{p_*}(\partial B(k))}^2+k^{-2}\|u_t^{\alpha\nu}\|_{L^{p_*}(\partial B(k))}^2)\\
&\uoset{\eqref{est:sobolev:sphere}}{}{\lesssim} k^{2-(d-1)\frac{\theta}p}\|\omega^{-1}\|_{\underline L^{\frac{p_*}{2-p_*}}(\partial B(k))}\|\sqrt\omega \nabla (u_t^{\alpha\nu})\|_{L^2(\partial B(k))}^2+k^{-(d-1)\frac{\theta}p}\|u_t^{\alpha\nu}\|_{L^2(\partial B(k))}^2,
\end{aligned}
\end{equation}
where we use $k^{d-1}\lesssim |\partial B(k)|\lesssim k^{d-1}$. Combining \eqref{ineq:caccioptim}, \eqref{est:lemmakeysobolevsphere} and \eqref{est:lemmakeysobolevsphere1} with the observation  $\frac{p_*}{2-p_*}\leq q$ (with equality if $d\geq3$), the choice $\delta=(\frac1p+\frac1{\theta q}+1)^{-1}$ and H\"older inequality with exponents $(\frac{p}\delta,\frac{\theta q}{\delta(q+1)},\frac{\theta}{\delta (\theta-1)})$, we obtain
\begin{align}\label{est:lemmakeystep2almostfinal}
(m-n)^{2+\frac1p+\frac1{\theta q}}&\min_{\eta\in\mathcal A(m,n)}\int_{I_2}\sum_{\ee\in\mathbb B^d}\omega(\ee)(\nabla \eta(\ee))^{2}(u_t^\alpha(\ee))^{2}\dt\notag
\\
\uoset{}{\eqref{ineq:caccioptim},\eqref{est:lemmakeysobolevsphere},\eqref{est:lemmakeysobolevsphere1}}{\lesssim}& 
\biggl(\sum_{k=n}^{m-1}  \|\omega\|_{\underline L^p(S(k))}^{\delta}\biggl(\int_{I_2} \sum_{i=k}^{k+1}i^2\|\omega^{-1}\|_{\underline L^q(\partial B(i))}\|\sqrt\omega \nabla (u_t^{\alpha\nu})\|_{L^2(\partial B(i))}^2+\|u_t^{\alpha\nu}\|_{L^2(\partial B(i))}^2\dt\biggr)^\frac\delta\theta\notag
\\
&\quad\times\biggl(\int_{I_2} \sum_{i=k}^{k+1}\|u_t^{\alpha}\|_{L^{2(1+\e)}(\partial B(i))}^{2(1+\e)}\dt\biggr)^{\delta(1-\frac1\theta)}\biggr)^\frac1\delta\notag\\
\uoset{\eqref{ineq:caccioptim},\eqref{est:lemmakeysobolevsphere},\eqref{est:lemmakeysobolevsphere1}}{\textrm{H\"older}}{\lesssim}
&
\biggl(\sum_{k=n}^{m-1}\|\omega\|_{\underline L^p(S(k))}^{p}\biggr)^\frac1p\biggl(\sum_{k=n}^{m-1}\int_{I_2} \sum_{i=k}^{k+1}\|u_t^{\alpha}\|_{L^{2(1+\e)}(\partial B(i))}^{2(1+\e)}\dt\biggr)^{1-\frac1\theta}\notag\\
&\quad\times \biggl(\sum_{k=n}^{m-1}\biggl(\int_{I_2} \sum_{i=k}^{k+1}i^2\|\omega^{-1}\|_{\underline L^q(\partial B(i))}\|\sqrt\omega \nabla (u_t^{\alpha\nu})\|_{L^2(\partial B(i))}^2+\|u_t^{\alpha\nu}\|_{L^2(\partial B(i))}^2\dt\biggr)^\frac{q}{q+1}\biggr)^{\frac{q+1}{q\theta}},
\end{align}
where in the first inequality the factor $k^{-(d-1)\frac \theta p}$ from~\eqref{est:lemmakeysobolevsphere1} is gone due to averaging in $\|\omega\|_{\underline L^p(S(k))}$. To estimate the last factor on the right-hand side in \eqref{est:lemmakeystep2almostfinal}, we first split the sum and then use once more H\"older inequality with exponents $(\frac{q+1}q,q+1)$ to obtain
\begin{equation}
\begin{aligned}\label{est:lemmakeystep2almostfinal1}
 &\biggl(\sum_{k=n}^{m-1}\biggl(\int_{I_2} \sum_{i=k}^{k+1}i^2\|\omega^{-1}\|_{\underline L^q(\partial B(i))}\|\sqrt\omega \nabla (u_t^{\alpha\nu})\|_{L^2(\partial B(i))}^2+\|u_t^{\alpha\nu}\|_{L^2(\partial B(i))}^2\dt\biggr)^\frac{q}{q+1}\biggr)^{\frac{q+1}{q}}\\
 &\lesssim \biggl(\sum_{k=n}^{m}\|\omega^{-1}\|_{\underline L^q(\partial B(k))}^q\biggr)^\frac1{ q}m^2\int_{I_2}\|\sqrt{\omega} \nabla u_t^{\alpha\nu}\|_{L^2(B(m))}^2\dt+(m-n)^\frac1{ q}%\biggl(
 \int_{I_2}\| u_t^{\alpha\nu}\|_{L^2(B(m))}^2\dt.%\biggr).
\end{aligned}
\end{equation}
Combining \eqref{est:lemmakeystep2almostfinal}, \eqref{est:lemmakeystep2almostfinal1}, assumption $m\leq2n$ (and thus $\sum_{k=n}^m\|\cdot\|_{\underline L^s(\partial B(k)}^s\lesssim n^{-(d-1)}\|\cdot\|_{L^s(B(m))}^s\lesssim m\|\cdot\|_{\underline L^s(B(m))}^s$ for all $s\geq1$) and definition \eqref{def:h1}, \ER we obtain
\begin{align*}
&(m-n)^{2+\frac1p+\frac1{\theta q}}\min_{\eta\in\mathcal A(m,n)}\int_{I_2}\sum_{\ee\in\mathbb B^d}\omega(\ee)(\nabla \eta(\ee))^{2}(u_t^\alpha(\ee))^{2}\dt\\
& \lesssim m^{\frac{1}p+\frac{1}{\theta q}}\|\omega\|_{\underline L^p(B(m))}\biggl(\int_{I_2}\|u_t^{\alpha}\|_{L^{2(1+\e)}(B(m))}^{2(1+\e)}\dt\biggr)^{1-\frac1\theta}\\
&\qquad \times \biggl(m^2\|\omega^{-1}\|_{\underline L^q(B(m))}\int_{I_2}\|\sqrt\omega \nabla u_t^{\alpha\nu}\|_{L^2(B(m))}^2\dt+\int_{I_2}\|u_t^{\alpha\nu}\|_{L^2(B(m))}^2\dt\biggr)^{\frac{1}{\theta}}\\
& \lesssim m^{\frac{1}p+\frac{1}{\theta q}}|B(m)|\|\omega\|_{\underline L^p(B(m))}\biggl(\int_{I_2}\|u_t^{\alpha}\|_{\underline L^{2(1+\e)}(B(m))}^{2(1+\e)}\dt\biggr)^{1-\frac1\theta}
\left( m^2\|\omega^{-1}\|_{\underline L^q(B(m))} s_2 \|u_t^{\alpha\nu}\|_{\underline H^1(I_2 \times B(m))}^2\right)^{\frac{1}{\theta}},
\end{align*}
and thus \eqref{est:L:key:s2:claima} follows.

\medskip
The argument for \eqref{est:L:key:s2:claima:simpler} (i.e. the special case $\theta=\nu=1$ of \eqref{est:L:key:s2:claima}) is naturally simpler, since one avoids the $L^{2(1+\e)}$-term.
By Lemma~\ref{L:optimcutoff}, we have for every $\delta\in(0,1]$
\begin{align*}
&\min_{\eta\in\mathcal A(n,m)}\int_{I_2}\sum_{\ee\in\mathbb B^d}\omega(\ee)|\nabla \eta(\ee)|^{2}(u_t^\alpha(\ee))^{2}\dt\leq (m-n)^{-(1+\frac1\delta)}\biggl(\sum_{k=n}^{m-1} \biggl(\int_{I_2}\sum_{\ee\in S(k)}\omega(\ee) (u_t^\alpha(\ee))^{2}\dt\biggr)^\delta\biggr)^\frac1\delta,
\end{align*}
where $S(m)$ is defined in \eqref{def:Sm}. By H\"older inequality, we have
\begin{align*}
\sum_{\ee \in S(k)} \omega(\ee)(u_t^\alpha(\ee))^{2}%\leq& \biggl(\sum_{\ee \in S(k)}\omega(\ee)^p\biggr)^\frac{\theta}{p}\biggl(\sum_{\ee\in S(k)}(u_t^\alpha(\ee))^{\frac{2p\nu}{p-\theta}}\biggr)^\frac{p-\theta}p\\
\lesssim& \biggl(\sum_{\ee \in S(k)}\omega(\ee)^p\biggr)^\frac{1}{p}\biggl(\|u_t^{\alpha}\|_{L^\frac{2p}{p-1}(\partial B(k))}^2+\|u_t^{\alpha}\|_{L^\frac{2p}{p-1}(\partial B(k+1))}^2\biggr).
\end{align*}
For $d\geq3$ let $p_*\geq1$ be such that $\frac1{p_*}=\frac1{d-1}+\frac12-\frac{1}{2p}$. Then a combination of Sobolev and H\"older inequality yield
\begin{align}\label{Lkey:step3:sobolev}
 \|u_t^{\alpha}\|_{L^\frac{2p}{p-1}(\partial B(k))}^2&\lesssim \|\nabla (u_t^{\alpha})\|_{L^{p_*}(\partial B(k))}^2+k^{-2}\|u_t^{\alpha}\|_{L^{p_*}(\partial B(k))}^2\notag\\
&\lesssim k^{2-(d-1)\frac{1}p}\|\omega^{-1}\|_{\underline L^q(\partial B(k))}\|\sqrt\omega \nabla u_t^{\alpha}\|_{L^2(\partial B(k))}^2+k^{-(d-1)\frac{1}p}\|u_t^{\alpha}\|_{L^2(\partial B(k))}^2%\\
%\lesssim&|S_r|^{\frac2{d-1}-\frac{s}p}\biggl(\|\lambda^{-1}\|_{\underline L^q(S_r)}\int_{S_r}\lambda|\nabla (u^{\alpha\nu})|^2+r^{-2}\int_{S_r}u^{2\alpha\nu}\biggr),
\end{align}
where the last inequality is valid since $\frac1q+\frac{1}p\leq\frac2{d-1}$ and $k^{d-1}\lesssim |\partial B(k)|\lesssim k^{d-1}$. Choosing $\delta=(\frac{1}p+\frac1q+1)^{-1}$, we obtain as in \eqref{est:lemmakeystep2almostfinal}
\begin{align}\label{est:lemmakeystep2almostfinals4}
&(m-n)^{2+\frac1p+\frac1q}\min_{\eta\in\mathcal A(m,n)}\int_{I_2}\sum_{\ee\in\mathbb B^d}\omega(\ee)(\nabla \eta(\ee))^{2}(u_t^\alpha(\ee))^{2}\dt\notag\\
\lesssim& \biggl(\sum_{k=n}^{m-1}  \|\omega\|_{\underline L^p(S(k))}^{\delta}\biggl(\int_{I_2} \sum_{i=k}^{k+1}i^2\|\omega^{-1}\|_{\underline L^q(\partial B(i))}\|\sqrt\omega \nabla (u^{\alpha})\|_{L^2(\partial B(i))}^2+\|u^{\alpha}\|_{L^2(\partial B(i))}^2\dt\biggr)^\delta\biggr)^\frac1\delta\notag\\
\leq&\biggl(\sum_{k=n}^{m-1}\|\omega\|_{\underline L^p(S(k))}^{p}\biggr)^\frac1p\notag\\
&\times \biggl(\sum_{k=n}^{m-1}\biggl(\int_{I_2} \sum_{i=k}^{k+1}i^2\|\omega^{-1}\|_{\underline L^q(\partial B(i))}\|\sqrt\omega \nabla (u^{\alpha})\|_{L^2(\partial B(i))}^2+\|u^{\alpha}\|_{L^2(\partial B(i))}^2\dt\biggr)^\frac{q}{q+1}\biggr)^{\frac{q+1}{q}}.
\end{align}
Combining \eqref{est:lemmakeystep2almostfinals4} with H\"older inequality in the form \eqref{est:lemmakeystep2almostfinal1} (with $\nu$ replaced by $1$), we obtain \eqref{est:L:key:s2:claima:simpler}.

For $d=2$, we argue as above but replace \eqref{Lkey:step3:sobolev} by
\begin{align*}
\|u_t^{\alpha}\|_{L^\frac{2p}{p-1}(\partial B(k))}^2\lesssim& k^{1-\frac1p}\|u_t^\alpha\|_{L^\infty(\partial B(k))}^2\\
\lesssim& k^{1-\frac1p}(\|\nabla (u_t^{\alpha})\|_{L^{1}(\partial B(k))}^2+k^{-2}\|u_t^{\alpha}\|_{L^{1}(\partial B(k))}^2)\\
\lesssim& k^{2-\frac{1}p}\|\omega^{-1}\|_{\underline L^q(\partial B(k))}\|\sqrt\omega \nabla (u_t^{\alpha})\|_{L^2(\partial B(k))}^2+k^{-\frac{1}p}\|u_t^{\alpha}\|_{L^2(\partial B(k))}^2.%\\
%\lesssim&|S_r|^{\frac2{d-1}-\frac{s}p}\biggl(\|\lambda^{-1}\|_{\underline L^q(S_r)}\int_{S_r}\lambda|\nabla (u^{\alpha\nu})|^2+r^{-2}\int_{S_r}u^{2\alpha\nu}\biggr),
\end{align*}

\ER

\ER

\substep{\ref{lm2:step2}.2}
We claim that there exists $c_2=c_2(d,p,q)\in[1,\infty)$ such that 
\begin{equation}\label{est:lkeyss2}
 \int_{I_2}\|u_t^{\alpha}\|_{L^{2(1+\e)}(B(m))}^{2(1+\e)}\dt
  \le  c_2 m^2 s_2|B(m)|\biggl(\sup_{t\in I_2}\|u_t^{\alpha}\|_{\underline L^2(B(m))}^2\biggr)^{\delta_2}\|\omega^{-1}\|_{\underline L^q(B(m))} \|u^{\alpha\nu}\|_{\underline H^{1}(I_2\times B(m))}^2.
\end{equation}

\smallskip

Let $Q>2$ be the Sobolev exponent for $\frac{2q}{q+1}$ in $\mathbb R^d$ given by 
\begin{equation}\label{def:Q}
\frac1Q=\frac1{2}-\frac12(\frac2d-\frac{1}{q})\stackrel{\eqref{def:delta}}{=}\frac{1-\delta_2}2
\end{equation}
(recall $\delta_2=\frac2d-\frac1q\in(0,1)$). Recalling $\e=\frac{\delta_2}\theta$ and $\nu=1-\delta_2(1-\frac1\theta)=1-\delta_2+\e$ (see \eqref{def:eps}), we obtain
\begin{equation}\label{est:nuQ}
\nu Q-2(1+\e)=2\e\biggl(\frac1{1-\delta_2}-1\biggr)>0%\frac{2}{1-\delta_2}\frac{\delta_2^2}\theta>0
\end{equation}
and the interpolation inequality 
$$
 \|v\|_{L^{2(1+\e)}}\leq \|v\|_{L^{2}}^\ell\|v\|_{L^{\nu Q}}^{1-\ell}%\quad\mbox{with $\frac1{2(1+\e)}=\frac\ell2+\frac{1-\ell}{\nu Q}$}
$$
with
\begin{equation}\label{def:interpolationell}
\frac1{2(1+\e)}=\frac\ell2+\frac{1-\ell}{\nu Q}\qquad\biggl(\mbox{and thus}\quad\ell=\frac{\frac1{2(1+\e)}-\frac1{\nu Q}}{\frac12-\frac1{\nu Q}} \biggr)
\end{equation}
implies
\begin{align}\label{est:lkeyss21}
 \|u_t^{\alpha}\|_{L^{2(1+\e)}(B(m))}^{2(1+\e)} \leq \|u_t^\alpha\|_{L^2(B(m))}^{2(1+\e)\ell}\|u_t^\alpha\|_{L^{\nu Q}(B(m))}^{2(1+\e)(1-\ell)}=\|u_t^\alpha\|_{L^2(B(m))}^{2\delta_2}\|u_t^{\alpha\nu}\|_{L^{Q}(B(m))}^{2} 
\end{align}
where in the last relation we used
\begin{align}\label{eq:ellepsnu}
\frac{1+\e}{\nu }(1-\ell)=\frac{1+\e}\nu\frac{1-\frac1{1+\e}}{1-\frac2{\nu Q}}=\frac{\e}{\nu-\frac2Q}=1,
\end{align}
and thus
\begin{equation*}
 (1+\e)\ell\stackrel{\eqref{eq:ellepsnu}}{=}1+\e-\nu\stackrel{\eqref{def:eps}}{=}\delta_2.
\end{equation*}
Since $\frac1Q=\frac1{2}-\frac12(\frac2d-\frac{1}{q})$, a combination of Sobolev and H\"older inequality yields
\begin{equation}\begin{aligned}\label{est:lkeyss22}
 \|u_t^{\alpha\nu}\|_{\underline L^Q(B(m))}^2 &\lesssim \left(m\|\nabla (u_t^{\alpha\nu})\|_{\underline L^{\frac{2q}{q+1}}(B(m))}+\|u_t^{\alpha\nu}\|_{\underline L^{\frac{2q}{q+1}}(B(m))}\right)^{2}\\
 &\le 2\left(m^2\|\omega^{-1}\|_{\underline L^q(B(m))}\|\sqrt{\omega}\nabla  (u_t^{\alpha\nu})\|_{\underline L^{2}(B(m))}^2+\|u_t^{\alpha \nu}\|_{\underline L^2(B(m))}^2\right).
\end{aligned}\end{equation}
Combining \eqref{est:lkeyss21} and \eqref{est:lkeyss22}, we obtain
\begin{align*}
\|u_t^{\alpha}\|_{L^{2(1+\e)}(B(m))}^{2(1+\e)}\lesssim&|B(m)|\|u_t^{\alpha}\|_{\underline L^2(B(m))}^{2\delta_2}\left(m^2\|\omega^{-1}\|_{\underline L^q(B(m))}\|\sqrt{\omega}\nabla  (u_t^{\alpha\nu})\|_{\underline L^{2}(B(m))}^2+\|u_t^{\alpha \nu}\|_{\underline L^2(B(m))}^2\right),
\end{align*}
and the claimed estimate \eqref{est:lkeyss2} follows by integration in time.

\substep{\ref{lm2:step2}.3} Proof of \eqref{L1:estkey}. A direct consequence of \eqref{est:L:key:s2:claima} and \eqref{est:lkeyss2} is 
\begin{align*}
&\min_{\eta\in\mathcal A(n,m)}\int_{I_2}\sum_{\ee\in\mathbb B^d}\omega(\ee)(\nabla \eta(\ee))^{2}(u_t^{\alpha}(\ee))^{2}\dt\notag\\
\leq&c_1c_2^{1-\frac1\theta}\biggl(\sup_{t\in I_2}\|u_t^{\alpha}\|_{\underline L^2(B(m))}^2\biggr)^{\delta_2(1-\frac1\theta)}\frac{\|\omega\|_{\underline L^p(B(m))}\|\omega^{-1}\|_{\underline L^q(B(m))}|B(m)||I_2|\|u^{\alpha\nu}\|_{\underline H^{1}(I_2\times B(m))}^2}{\left(1-\frac{n}m\right)^{2+\frac1p+\frac1{\theta q}}},
\end{align*}
which implies the claimed estimate \eqref{L1:estkey} 
(using $m\leq 2n$, $1-\nu\stackrel{\eqref{def:nu}}{=}\delta_2(1-\frac1\theta)$, and $\gamma = 2 + \frac 2 p + \frac{1}{\theta q}$).

\substep{\ref{lm2:step2}.4} Proof of \eqref{ineq:interpolationrange}.

For $d=2$ (and thus $\theta=p$) \eqref{ineq:interpolationrange} reads 
$$
 0<\frac{p-1+(1-\frac1q)(1-\frac1p)}{p-1}<\frac{p}{p-1}<\infty
$$
which is obviously true since $p,q>1$. 

Let us now verify \eqref{ineq:interpolationrange} for $d\geq3$. The last inequality in \eqref{ineq:interpolationrange} is trivial, while the first follows directly from $0<\nu<1<\theta<p$ since $d\geq3$. Next, we observe that
\begin{align*}
 &\frac{p}{p-1}-\frac{\theta-\nu}{\theta-1}\stackrel{\eqref{def:nu}}{=}\frac1{p-1}-\frac{\delta_2}\theta=\frac{\theta-\delta_2(p-1)}{(p-1)\theta},\\
 &\frac{p\nu}{p-\theta}-\frac{p}{p-1}=\frac{p}{p-\theta}\biggl(\nu-1+\frac{\theta-1}{p-1}\biggr)\stackrel{\eqref{def:nu}}{=}\frac{p}{p-\theta}\biggl(\frac{\theta-1}{p-1}-\delta_2\frac{\theta-1}\theta\biggr)=\frac{p(\theta-1)}{(p-\theta)(p-1)\theta}(\theta-\delta_2(p-1)),
\end{align*}
and thus the second and third inequality in \eqref{ineq:interpolationrange} are equivalent to $\theta-\delta_2(p-1)>0$. In the case $d\geq3$, we have
\begin{align*}
 \theta-\delta_2(p-1)\stackrel{\eqref{def:delta}}{=}\frac{2p}{d-1}-\frac{p}q-(\frac2d-\frac1q)(p-1)=\frac2d-\frac1q+2p(\frac1{d-1}-\frac1d)\geq \frac2d-\frac1q>0,
\end{align*}
where the last inequality follows from the assumption $q>\frac2d$. Hence, the inequalities \eqref{ineq:interpolationrange} are proven.

\PfStep{lm2}{} 
Proof of estimates \eqref{est:keysubsol} and \eqref{est:keysubsol3}. Estimate \eqref{est:keysubsol3} follows directly from \eqref{est:basic:sub} and \eqref{est:L:key:s2:claima:simpler}. To show \eqref{est:keysubsol}, we combine \eqref{est:basic:sub} and \eqref{L1:estkey} to obtain
\begin{align}\label{est:keysubsolalmost1}
&\sup_{t\in I_1}\|u_t^\alpha\|_{\underline L^2(B(n))}^2 +\int_{I_1} \|\sqrt\omega\nabla (u_t^\alpha)\|_{\underline L^2(B(n))}^2\notag\\
&\leq c\alpha^2\biggl(\sup_{t\in I_2}\|u^\alpha\|_{\underline L^2(B(m))}^2\biggr)^{1-\nu}\|\omega\|_{\underline L^p(B(m))}\|\omega^{-1}\|_{\underline L^q(B(m))}\frac{s_2}{(1-\tfrac{n}m)^{\gamma}}\|u^{\alpha\nu}\|_{\underline H^{1}(I_2\times B(m))}^2\notag\\
&\quad + c (s_2-s_1)^{-1}\int_{I_2}\|u_t^\alpha\|_{\underline L^2(B(m))}^2\dt,
\end{align}
where $c=c(d,p,q)\in[1,\infty)$. The first term on the right-hand side has already the desired form, hence we only need to estimate the second term: 
Let $Q>2$ be the Sobolev exponent for  $\frac{2q}{q+1}$ given by \eqref{def:Q}, which by~\eqref{est:nuQ} satisfies $\nu Q > 2(1+\e) > 2$. Combination of Jensen and Sobolev inequality yield
\begin{align}\label{est:keysubsolalmost2}
\int_{I_2}\|u_t^\alpha\|_{\underline L^2(B(m))}^2\dt &\leq \left(\sup_{t\in I_2}\|u_t^\alpha\|_{\underline L^2(B(m))}^2\right)^{1-\nu}\int_{I_2}\|u_t^\alpha\|_{\underline L^2(B(m))}^{2\nu}\dt\notag\\
&\leq\left(\sup_{t\in I_2}\|u_t^\alpha\|_{\underline L^2(B(m))}^2\right)^{1-\nu}\int_{I_2}\|u_t^{\alpha}\|_{\underline L^{\nu Q}(B(m))}^{2\nu}\dt\notag\\ 
&=  \left(\sup_{t\in I_2}\|u_t^\alpha\|_{\underline L^2(B(m))}^2\right)^{1-\nu}\int_{I_2}\|u_t^{\alpha \nu}\|_{\underline L^{Q}(B(m))}^{2}\dt\notag\\ 
&\lesssim\left(\sup_{t\in I_2}\|u_t^\alpha\|_{\underline L^2(B(m))}^2\right)^{1-\nu}\int_{I_2}m^2\|\nabla (u_t^{\alpha\nu})\|_{\underline L^{\frac{2q}{q+1}}(B(m))}^{2}+\|u_t^{\alpha\nu}\|_{\underline L^{\frac{2q}{q+1}}(B(m))}^{2}\dt\notag\\
&\leq\left(\sup_{t\in I_2}\|u_t^\alpha\|_{\underline L^2(B(m))}^2\right)^{1-\nu}\int_{I_2}m^2\|\omega^{-1}\|_{\underline L^q(B(m))}\|\sqrt{\omega}\nabla (u_t^{\alpha\nu})\|_{\underline L^{2}(B(m))}^{2}+\|u_t^{\alpha\nu}\|_{\underline L^{2}(B(m))}^{2}\dt\notag\\
&=\left(\sup_{t\in I_2}\|u_t^\alpha\|_{\underline L^2(B(m))}^2\right)^{1-\nu}s_2 m^2\|\omega^{-1}\|_{\underline L^q(B(m))}\|u^{\alpha\nu}\|_{\underline H^1(I_2\times B(m))}^2.
\end{align}
Recall that the $H^1$-norm consist of the $L^2$ norm of the function and its gradient (see~\eqref{def:h1}), so to get the full $H^1$-norm on the left-hand side of~\eqref{est:keysubsolalmost1} we need to add and estimate $u_t^{\alpha}$ itself: for that we observe that the assumption $1\leq \frac{m}{n}\leq 2$ and \eqref{est:keysubsolalmost2},  applied on $B(n)$ instead of $B(m)$, yield
\begin{equation}
\begin{aligned}\label{est:keysubsolalmost3}
|B(n)|^{-\frac{2}d}\|\omega^{-1}\|_{\underline L^q(B(n))}^{-1}\int_{I_1}\|u_t^\alpha\|_{\underline L^2(B(n))}^2\dt &\lesssim\left(\sup_{t\in I_1}\|u_t^\alpha\|_{\underline L^2(B(n))}^2\right)^{1-\nu} s_1 \|u^{\alpha\nu}\|_{\underline H^1(I_1\times B(n))}^2 \\
&\lesssim\left(\sup_{t\in I_2}\|u_t^\alpha\|_{\underline L^2(B(m))}^2\right)^{1-\nu}s_2 \|u^{\alpha\nu}\|_{\underline H^1(I_2\times B(m))}^2.
\end{aligned}
\end{equation}
Estimate \eqref{est:keysubsol} follows from \eqref{est:keysubsolalmost1}--\eqref{est:keysubsolalmost3} and $1\leq \|\omega\|_{\underline L^p(B(m))}\|\omega^{-1}\|_{\underline L^q(B(m))}$.

\end{proof}

Next, we combine Lemma~\ref{L:key} with a variation of Moser iteration method to prove local boundedness of non-negative subcaloric functions. For this, we apply the estimates of Lemma~\ref{L:key} on a sequence of parabolic cylinder. Fix $\tau>0$ and let $x_0\in \mathbb Z^d$, $t_0\in\mathbb R$, $n\geq0$ and $\sigma\in(0,1]$. Set
\begin{equation*}%Q_\sigma(\tau,n):=
 Q_\sigma(t_0,x_0,\tau,n):=[t_0-\sigma \tau n^2,t_0]\times B(x_0,\sigma n).
\end{equation*}
\begin{theorem}[Local boundedness]\label{T:bound}
Fix $d\geq2$, $\omega\in\Omega$, $p\in(1,\infty)$ and $q\in(\frac{d}2,\infty)$ satisfying $\frac1p+\frac1q<\frac2{d-1}$. 
There exists $c=c(d,p,q)\in[1,\infty)$ such that the following is true: Let $u>0$ be a subcaloric function in $Q_1(t_0,x_0,\tau,n)$ with $t_0\in\R$, $x_0\in\mathbb Z^d$, $n\ge 2^{20}$, and $\tau>0$. Then
\begin{equation}\label{est:Tbound}
\|u\|_{L^\infty(Q_\frac12(t_0,x_0,\tau,n))}\leq c\mathcal C(\omega,B(x_0,n),\tau)^\frac{p}{p-1}\|u\|_{\underline L^{2}(Q_1(t_0,x_0,\tau,n))}, 
\end{equation}
where 
\begin{equation}\label{def:C}
\mathcal C(\omega,B,\tau):=\max\{1,\tau^\frac12\biggl(\|\omega^{-1}\|_{\underline L^q(B)}(\|\omega\|_{\underline L^p(B)}+\tau^{-1})^{2-\nu}\biggr)^{\frac1{2(1-\nu)}}\}\in[1,\infty)
\end{equation}
and $\nu=\nu(d,p,q)\in(0,1)$ is given in \eqref{def:nu}.
%%
%\begin{equation}\label{def:nu2}
%\nu=1-\delta_2\begin{cases}(1-\frac1p)&\mbox{if $d=2$}\\(1-\frac6{1+5p})&\mbox{if $d=3$ and $q=\infty$}\\(1-\frac1{1+p\delta_1})&\mbox{if $d\geq4$ or $d=3$ and $q<\infty$}\end{cases}
%\end{equation}
%%
%with $\delta_1=\frac2{d-1}-\frac1q-\frac1p>0$ and $\delta_2=\frac2d-\frac1q>0$.

\end{theorem}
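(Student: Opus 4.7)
The argument is a parabolic Moser iteration based on estimate~\eqref{est:keysubsol} of Lemma~\ref{L:key}, combined with a classical interpolation--absorption step which produces the precise exponent $p/(p-1)$ on $\mathcal C$. I would first introduce a decreasing sequence of cylinders $Q^{(k)}:=Q_{\sigma_k}(t_0,x_0,\tau,n)$ with $\sigma_k\searrow 1/2$ (say $\sigma_k=\tfrac12+2^{-k-1}$) together with exponents $\alpha_k$ obeying the recursion $\alpha_{k+1}\nu=\alpha_k$, so that $\alpha_k\to\infty$ geometrically since $\nu\in(0,1)$. On each pair $(Q^{(k+1)},Q^{(k)})$ I apply~\eqref{est:keysubsol} with exponent $\alpha_{k+1}$: the factor $(\sup_t s_2^{-1}\|u_t^{\alpha_{k+1}}\|_{\underline L^2}^2)^{1-\nu}$ on the right-hand side is dominated by a fractional power of the left-hand side (the full $\sup_t\,L^2 + \underline H^1$ quantity) at the previous level, and a Young-type absorption closes the recursion. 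Combined with the parabolic Sobolev/interpolation already used in Substep~3.2 of Lemma~\ref{L:key} --- which controls $\|u^{\alpha_{k+1}}\|_{L^{2(1+\varepsilon)}}$ by the LHS of~\eqref{est:keysubsol} --- each iteration upgrades the integrability of $u$ by a fixed factor strictly greater than $1$.

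Since $\sum_k \alpha_k^{-1}<\infty$ (geometric series), the accumulated product of constants $\prod_k C_k^{1/\alpha_k}$ converges, and passing to the limit $k\to\infty$ yields a bound of the shape
\[
\|u\|_{L^\infty(Q_{1/2}(t_0,x_0,\tau,n))}\leq C(\sigma,\omega,\tau,n)\,\|u\|_{\underline L^{2p/(p-1)}(Q_\sigma(t_0,x_0,\tau,n))}\qquad\forall\,\sigma\in(\tfrac12,1].
\]
The reason $L^{2p/(p-1)}$ (rather than $L^2$) naturally appears on the right is that the very first iteration cannot use~\eqref{est:keysubsol} (its right-hand side already requires $\underline H^1$-control of a power of $u$), so one switches to the Caccioppoli-type estimate~\eqref{est:keysubsol2}, which bridges $L^{2p/(p-1)}$-data into $\underline H^1$-control via H\"older against $\|\omega\|_{\underline L^p}$.

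To reach the $L^2$-norm on the right-hand side of~\eqref{est:Tbound}, I would interpolate
\[
\|u\|_{\underline L^{2p/(p-1)}(Q_\sigma)}\leq \|u\|_{L^\infty(Q_\sigma)}^{1/p}\,\|u\|_{\underline L^2(Q_\sigma)}^{(p-1)/p},
\]
plug this into the previous display, and apply a standard iteration-of-scales absorption lemma (valid precisely because $1/p<1$) to move the $\|u\|_{L^\infty(Q_\sigma)}^{1/p}$ factor to the left. After this absorption the accumulated constant appears raised exactly to the power $\frac{1}{1-1/p}=\frac{p}{p-1}$, and matching it with the definition~\eqref{def:C} of $\mathcal C$ gives the advertised bound $\|u\|_{L^\infty(Q_{1/2})}\le c\,\mathcal C^{p/(p-1)}\|u\|_{\underline L^2(Q_1)}$.

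The main obstacle is verifying that the accumulated constant really has the precise form $\mathcal C^{p/(p-1)}$ with $\mathcal C$ given by~\eqref{def:C}: one must track carefully how $\|\omega\|_{\underline L^p}$, $\|\omega^{-1}\|_{\underline L^q}$, the time scale $\tau$, and the geometric factors $(1-\sigma_{k+1}/\sigma_k)^{-\gamma}$ compound through the iteration. The exponent $1/(2(1-\nu))$ in~\eqref{def:C} is dictated by the $\nu$-fractional Young absorption performed at each iteration step, while the exponent $2-\nu$ on $\|\omega\|_{\underline L^p}+\tau^{-1}$ comes from combining the spatial and temporal prefactors in~\eqref{est:keysubsol}; getting these exactly right is the delicate bookkeeping part of the proof, alongside ensuring that the base step with~\eqref{est:keysubsol2} does not force a larger power of the ellipticity norms than what~\eqref{def:C} allows.
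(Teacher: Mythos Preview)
Your outline captures Steps~2 and~3 of the paper's proof correctly: the Moser iteration with $\alpha_k=\nu^{1-k}$, the use of~\eqref{est:keysubsol2} at the first step to land on the $\underline L^{2p'}$-norm, and the interpolation--absorption argument producing the exponent $p/(p-1)$. But there is a genuine gap in how you propose to deal with the factor $(\sup_{t\in I_2}s_2^{-1}\|u_t^{\alpha}\|_{\underline L^2(B(m))}^2)^{1-\nu}$ on the right-hand side of~\eqref{est:keysubsol}.

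You write that this factor ``is dominated by a fractional power of the left-hand side \dots\ at the previous level, and a Young-type absorption closes the recursion''. This does not work: when you apply~\eqref{est:keysubsol} with $\alpha=\alpha_{k}$ on the pair $(Q^{(k)},Q^{(k-1)})$, the problematic factor is $(\sup_t\|u_t^{\alpha_{k}}\|_{\underline L^2(B_{k-1})}^2)^{1-\nu}$, which carries the \emph{current} exponent $\alpha_{k}$ on the \emph{larger} ball $B_{k-1}$. It is neither the left-hand side at level $k$ (wrong domain) nor any quantity at level $k-1$ (wrong exponent), so no Young absorption against those quantities is available. The paper instead resolves this with a separate \emph{inner} iteration at \emph{fixed} $\alpha$: iterating~\eqref{est:keysubsol} across a further sequence of intermediate cylinders drives the exponent $(1-\nu)^{\hat k}$ on the sup-factor to something small, and then~\eqref{est:keysubsol3} converts the residual sup-factor into a clean $\underline H^1$-bound. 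Only after this is there a clean one-step estimate~\eqref{est:moser:iterate:prelimclaim} relating $\alpha$ to $\alpha\nu$ that can be Moser-iterated.

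There is a second subtlety you miss entirely: this inner iteration terminates by invoking the crude discrete $\ell^2$--$\ell^{2/\nu}$ inequality, which costs a factor $|B(n)|^{(1-\nu)^{\hat k}}$. Both inner and outer iterations therefore produce powers of $|B(n)|$ (encoded in the function $\beta(z)$ in the paper), and one must choose the number of inner and outer steps as suitable functions of $n$ (roughly $\hat k\sim\tfrac12\log_2 n$ and then $\tfrac14\log_2 n$) so that these powers stay bounded as $n\to\infty$. Without this bookkeeping the constant would blow up with $n$ and~\eqref{est:Tbound} would fail. Your sentence ``since $\sum_k\alpha_k^{-1}<\infty$, the accumulated product $\prod_k C_k^{1/\alpha_k}$ converges'' is correct for the $\omega$- and $\tau$-dependent parts of the constant, but is not by itself enough to control these $|B(n)|$-factors.
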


\begin{remark}\label{rem:Tboundpqoptimal}
In recent works \cite{CS19,AT19} the statement of Theorem~\ref{T:bound} is proven under the more restrictive relation $\frac1p+\frac1q<\frac2d$. The restrictions on $p$ and $q$ in Theorem~\ref{T:bound} are essentially optimal: Counterexamples to elliptic regularity in the form \cite[Theorem~2.6]{BK14} show that local boundedness in the form \eqref{est:Tbound} with \eqref{def:C} fails already for $\mathcal L^\omega$-harmonic functions if $\frac1p+\frac1q>\frac2{d-1}$, see \cite[Remark 2 and 4]{BS19b} for a more detailed discussion. The additional restriction on $q$, namely $q>\frac{d}2$, is not present in the corresponding elliptic version of Theorem~\ref{T:bound} (see \cite[Theorem~2]{BS19b} and Theorem~\ref{T:ellipticboundedness} below) and can be related to trapping phenomena for random walks in random environments. In Remark~\ref{rem:trapping} below we discuss this in more detail and show that local boundedness in the form of Corollary~\ref{C:Tbound} below (which is a direct consequence of Theorem~\ref{T:bound}) is not valid for $q<\frac{d}2$. 
\end{remark}

%\begin{remark}
%The restriction on $n$, namely $n\geq2^{20}$, in Theorem~\ref{T:bound} is a consequence of the discreteness of the problem. This restriction could easily be removed: Combining the (scale invariant) estimate \eqref{est:keysubsol2} with discrete $\ell^r-\ell^s$-inequalities (see \eqref{lplq} below), we obtain for every positive subsolution in $Q_1(\tau,n)$ with $n\geq2$ and $\tau>0$ the (non scale invariant) estimate
% %
% \begin{equation}\label{est:sub:stupid}
% \|u\|_{L^\infty(Q_\frac12(\tau,n))}\leq c\tau^\frac12|B(n)|^{\frac12(1+\frac1p)}(\|\omega\|_{\underline L^p(B(n))}+\tau^{-1})^\frac12\|u\|_{\underline L^{2}(Q_1(\tau,n))},
% \end{equation}
% %
% where $c=c(d,p)\in[1,\infty)$. Using \eqref{est:Tbound} for $n\geq2^{20}$ and \eqref{est:sub:stupid} for $2\leq n< 2^{20}$, we obtain that the claim in Theorem~\ref{T:bound} (with a suitable re-definition of $\mathcal C$) is valid for all $n\geq2$. 
%\end{remark}

\begin{proof}[Proof of Theorem \ref{T:bound}]

Without loss of generality we consider $t_0=0$ and $x_0=0$, and we use the shorthand $Q_\sigma(\tau,n)=Q_\sigma(0,0,\tau,n)$. Throughout the proof we write $\lesssim$ if $\leq$ holds up to a positive constant that depends only on $d,p$ and $q$. The proof is divided in three steps: (i) using Lemma~\ref{L:key} and an iteration argument, we obtain a one-step improvement; (ii) the one-step improvement and a Moser iteration-type argument yield local boundedness in the form \eqref{est:Tbound} where the $L^2$-norm on the right-hand side is replaced by a slightly stronger norm of $u$; (iii) finally a well-known interpolation argument yield the claimed estimate. 

\step 1 One-step improvement.

Let $u>0$ be subcaloric and $\alpha\geq1$. We claim that there exists $c=c(d,p,q)\in[1,\infty)$ such that for all $\frac12\leq \rho<\sigma<1$ and $n(\sigma-\rho)\geq  2^5$ it holds
\begin{align}\label{est:moser:iterate:prelimclaim}
 &\sup_{ t\in [-\rho \tau n^2, 0]}(\rho \tau n^{2})^{-1}\|u_t^\alpha\|_{\underline L^{2}(B(\rho n))}^2+\|u^{\alpha}\|_{\underline H^{1}(Q_\rho(\tau,n))}^{2}\notag\\
 &\quad \le \biggl(c|B(n)|^{(1-\nu)^{\lfloor \log_2((\sigma-\rho)n)\rfloor- 3}}\alpha^2(\tau n^2)^{1-\nu}\frac{\|\omega^{-1}\|_{\underline L^q(B(\sigma n))}}{(\sigma-\rho)^{\hat \gamma}} \big(\|\omega\|_{\underline L^p(B(\sigma n))}+\tau^{-1}\bigr)\|u^{\alpha \nu}\|_{\underline H^{1}(Q_\sigma(\tau, n))}^{2}\biggr)^\frac1{\nu},
\end{align}
where $\nu\in(0,1)$ is defined in \eqref{def:nu} and 
\begin{equation}\label{def:gammahat}
\hat \gamma=2+\frac1p+\frac1q\in[2,\infty).
\end{equation}
For $k\in\mathbb N\cup\{0\}$, set
\begin{equation}\label{def:sigmak}
 \sigma_k:=\frac12(\sigma+\rho)-2^{-{(k+1)}}(\sigma-\rho),\qquad B_k:=B(\sigma_k n),\qquad I_k:=[-\sigma_k \tau n^2,0].
\end{equation}
In view of Lemma~\ref{L:key}, there exists $c=c(d,p,q)\in[1,\infty)$ such that for any $k\in\mathbb N$ satisfying $2^{k+3}\leq (\sigma-\rho)n$ (which ensures  $\lfloor \sigma_{k+1} n\rfloor -\lfloor \sigma_{k} n\rfloor\geq1$)
\begin{align}\label{est:moser:iterate:prelim}
 &\sup_{t\in I_{k}}|I_k|^{-1}\|u_t^\alpha\|_{\underline L^{2}(B_k)}^2+\|u^{\alpha}\|_{\underline H^{1}(I_{k}\times B_k)}^{2}\notag\\
 &\le c \alpha^2 2^{(k+1){\hat \gamma}}\biggl(\sup_{t\in I_{k+1}}|I_{k+1}|^{-1}\|u_t^\alpha\|_{\underline L^2(B_{k+1})}^2\biggr)^{\!1-\nu} \!\!\!(\tau n^2)^{1-\nu}\notag\\
&\quad\times
\|\omega^{-1}\|_{\underline L^q(B(\sigma n))}\frac{\|\omega\|_{\underline L^p(B(\sigma n))}+\tau^{-1}}{(\sigma-\rho)^{{\hat \gamma}}}\|u^{\alpha\nu}\|_{\underline H^{1}(Q_\sigma(\tau, n))}^2.
 \end{align}
Indeed, \eqref{est:moser:iterate:prelim} follows from estimate \eqref{est:keysubsol} (with $n=\lfloor\sigma_k n\rfloor$, $m=\lfloor\sigma_{k+1}n\rfloor$, $s_1=\tau\sigma_kn^2$, and $s_2=\tau \sigma_{k+1}n^2$) and $\hat {\gamma}\geq\gamma\geq2$ (where $\gamma$ is given as in Lemma~\ref{L:key}), together with the elementary estimates 
\begin{align*}
\biggl(1-\frac{\lfloor \sigma_{k}n\rfloor}{\lfloor \sigma_{k+1}n\rfloor}\biggr)^{-\gamma}\leq&\left(\frac{n}{2^{-(k+2)}(\sigma-\rho)n-1}\right)^\gamma\leq \left(\frac{2^{k+3}}{\sigma-\rho}\right)^\gamma\\
\frac1{\sigma_{k+1}\tau n^2-\sigma_{k}\tau n^2}=&\frac{2^{k+2}}{n^2\tau(\sigma-\rho)} .
\end{align*}
Set $\hat k:=\hat k(n,\sigma-\rho):=\lfloor \log_2((\sigma-\rho)n)\rfloor- 3$, so that $\hat k=\max\{k\in\mathbb N\,|\,2^{k+3}\leq (\sigma-\rho)n\}$ and hence $m-n \ge 1$. 

Using \eqref{est:moser:iterate:prelim} $(\hat k-1)$-times, we obtain
\begin{align}\label{est:moser:almostfinal:prelim}
&\sup_{t\in I_0}|I_{0}|^{-1}\|u_t^\alpha\|_{\underline L^{2}(B_0)}^2+\|u^{\alpha}\|_{\underline H^{1}(I_{0}\times B_{0})}^{2}\notag\\
&\begin{aligned}
&\le \biggl(\sup_{t\in I_{\hat k-1}}|I_{\hat k-1}|^{-1}\|u_t^\alpha\|_{\underline L^{2}(B_{\hat k-1})}^2\biggr)^{(1-\nu)^{\hat k-1}}\left(2^{\hat \gamma\sum_{k=0}^{\hat k-2}(k+1)(1-\nu)^k}\right)\\
&\quad \times \biggl(c \alpha^2  (\tau n^2)^{1-\nu}\|\omega^{-1}\|_{\underline L^q(B(\sigma n))}\frac{\|\omega\|_{\underline L^p(B(\sigma n))}+\tau^{-1}}{(\sigma-\rho)^{\gamma}}\|u^{\alpha\nu}\|_{\underline H^{1}(Q_\sigma(\tau,n))}^2\biggr)^{\sum_{k=0}^{\hat k-2}(1-\nu)^k}.
\end{aligned}
\end{align}
We will repeatedly use discrete $\ell^r-\ell^s$ inequality for sequences with $r \ge s$, which after taking averages has the following form
\begin{equation}\label{lplq}
 |B(n)|^{\frac 1r - \frac 1s} \|f\|_{\underline L^r(B(n))} \le \|f\|_{\underline L^s(B(n))}.
\end{equation}

We estimate the first factor of the right-hand side in \eqref{est:moser:almostfinal:prelim} using~\eqref{lplq} and \eqref{est:keysubsol3}
\begin{align*}
 &\biggl(\sup_{t\in I_{{\hat k-1}}}|I_{\hat k-1}|^{-1}\|u_t^\alpha\|_{\underline L^{2}(B_{\hat k-1})}^2\biggr)^{(1-\nu)^{\hat k-1}}
 \\
 &\uoset{\eqref{est:keysubsol3}}{}{\le} \left(|B_{\hat k-1}| |I_{\hat k-1}|\right)^{\frac{(1-\nu)^{\hat k}}\nu}\biggl(\sup_{t\in I_{{\hat k-1}}}|I_{\hat k-1}|^{-1}\|u_t^{\alpha\nu}\|_{\underline L^{2}(B_{\hat k-1})}^2\biggr)^{\frac{(1-\nu)^{\hat k-1}}\nu}
 \\
&\stackrel{\eqref{est:keysubsol3}}{\le} \biggl(c\alpha^2 |B(n)|^{1-\nu} (\tau n^2)^{1-\nu}\|\omega^{-1}\|_{\underline L^q(B(\sigma n))}\frac{\|\omega\|_{\underline L^p(B(\sigma n))}+\tau^{-1}}{(\sigma-\rho)^{\hat\gamma}}\|u^{\alpha\nu}\|_{\underline H^1(Q_\sigma(\tau,n))}^2\biggr)^{\frac{(1-\nu)^{\hat k-1}}\nu}.
\end{align*}
The previous formula,~\eqref{est:moser:almostfinal:prelim}, and the identity $\sum_{k=0}^{\hat k-2}(1-\nu)^k=\frac{1-(1-\nu)^{\hat k-1}}{\nu}$ imply \eqref{est:moser:iterate:prelimclaim},  where we also used that 
$\sum_{k=0}^{\hat k-2}(k+1)(1-\nu)^k \le \frac{1}{\nu^2} \le c(p,q,d)$.

\step 2 Iteration.

We claim that there exists $c=c(d,p,q)\in[1,\infty)$ such that for all $n(\sigma-\rho)\ge  2^9$ holds
\begin{align}\label{est:Tboundp}
&\|u\|_{L^\infty(Q_\rho(\tau,n))}\leq c|B(n)|^{\beta((\sigma-\rho)n)} \frac{\mathcal C(\omega,B(\sigma n),\tau)}{(\sigma-\rho)^{\frac{\hat\gamma}{2(1-\nu)}+1}}\|u\|_{2,2p',Q_\sigma(\tau, n)},
\end{align}
where $\mathcal C$ is defined in \eqref{def:C}, $p'=\frac{p}{p-1}$, and 
\begin{align}\label{def:beta}
%\beta(z):=\frac{\nu^{\lfloor \frac12 \lfloor  \log_2(z)\rfloor\rfloor}}2+\frac{(1-\nu)^{\frac12 \lfloor  \log_2(z)\rfloor}}{2\nu}.
\beta(z):=\frac{1}{2\nu} \bigl( \nu^{\frac12 \log_2(z) - 1} + (1-\nu)^{\frac12 \log_2(z) - 4} \bigr).
\end{align}
and for $I\subset \R$ and $m\in\mathbb N$, we set
\begin{equation*}
 \|v\|_{2,2p',I\times B(m)}:=\biggl(\fint_I \|v_t\|_{\underline L^{2p'}(B(m))}^2\dt\biggr)^\frac12.
\end{equation*}
For $k\in\mathbb N\cup\{0\}$ and $\frac 12 \le \rho < \sigma < 1$, set 
\begin{equation*}
 \alpha_k:=\nu^{-k+1}, \quad \sigma_k := \rho + 2^{-k}(\sigma-\rho), \quad B_k := B(\sigma_k n), \quad I_k := [-\sigma_k \tau n^2, 0].
\end{equation*}
%
% and let $B_k$ and $I_k$ be as in \eqref{def:sigmak}. 
Using~\eqref{est:moser:iterate:prelimclaim} from Step~1 with $\sigma_k$ and $\sigma_{k-1}$ playing role of $\rho$ and $\sigma$, respectively, we get 
\begin{align*}%\label{est:moser:iterate:prelimclaim}
 &\sup_{ t\in I_k} |I_k|^{-1} \|u_t^\alpha\|_{\underline L^{2}(B_k)}^2+\|u^{\alpha}\|_{\underline H^{1}(I_k \times B_k)}^{2}\notag\\
 &\quad \le \biggl(c|B(n)|^{(1-\nu)^{\lfloor \log_2((2^{-k}(\sigma-\rho)n)\rfloor- 3}}\alpha^2(\tau n^2)^{1-\nu}\frac{\|\omega^{-1}\|_{\underline L^q(B(\sigma n))}}{(2^{-k}(\sigma-\rho))^{\hat \gamma}} \big(\|\omega\|_{\underline L^p(B(\sigma n))}+\tau^{-1}\bigr)\|u^{\alpha \nu}\|^2_{\underline H^{1}(I_{k-1}\times B_{k-1}}\biggr)^\frac1{\nu},
\end{align*}
where $c=c(d,p,q)\in[1,\infty)$ and we required $n(\sigma-\rho)2^{-k} = n(\sigma_{k-1} - \sigma_k) \ge 2^5$. Observe that $\sigma/2 \le \sigma_k \le \sigma$ allowed us to replace the norms of $\omega$ on $B_k = B(\sigma_k n)$ with the ones on the larger ball $B(\sigma n)$ (while increasing $c$ by a fixed factor).

Using the last relation with $\alpha_k$ playing the role of $\alpha$ and afterwards taking both sides to the power $\frac{1}{2\alpha_k}$, yields \ER
\begin{align}\label{est:moser:iterate}
 &\sup_{t\in I_{k}}|I_k|^{-\frac1{2\alpha_{k}}}\|u_t\|_{\underline L^{2\alpha_{k}}(B_k)}+\|u^{\alpha_{k}}\|_{\underline H^{1}(I_{k}\times B_k)}^{\frac{1}{\alpha_{k}}}\notag\\
 &\le \biggl(c|B(n)|^{(1-\nu)^{\lfloor \log_2((\sigma-\rho)n)\rfloor- 3-k}} \nu^{-2(k-1)}(\tau n^2)^{1-\nu}\frac{\|\omega^{-1}\|_{\underline L^q(B(\sigma n))}}{(\sigma-\rho)^{\hat \gamma}} 2^{\hat \gamma k}\big(\|\omega\|_{\underline L^p(B(\sigma n))}+\tau^{-1}\bigr)\biggr)^\frac1{2\alpha_{k-1}}\notag\\
 &\quad \times\|u^{\alpha_{k-1}}\|_{\underline H^{1}(I_{k-1}\times B_{k-1})}^{\frac{1}{\alpha_{k-1}}},
\end{align}
where we used $\alpha_k \nu = \alpha_{k-1}$. Fix $\hat k\leq \hat k_1(n,\sigma-\rho):=\lfloor \log_2((\sigma-\rho)n)\rfloor -5=\max\{k\in\mathbb N\,|\,2^{k+5}\leq (\sigma-\rho)n\}$. Using \eqref{est:moser:iterate} $(\hat k-1)$-times, we obtain
\begin{align}\label{est:moser:almostfinal}
&\sup_{t\in I_{{\hat k}}}\|u_t\|_{\underline L^{2\alpha_{\hat k}}(B_{\hat k})}\notag\\
&\quad \le \prod_{k=2}^{\hat k}\biggl(c|B(n)|^{(1-\nu)^{\lfloor \log_2((\sigma-\rho)n)\rfloor-3-k}}\nu^{-2(k-1)}(\tau n^2)^{1-\nu}\frac{\|\omega^{-1}\|_{\underline L^q(B(\sigma n))}}{(\sigma-\rho)^{\hat\gamma}}2^{\hat\gamma k}(\|\omega\|_{\underline L^p(B(\sigma n))}+\tau^{-1})\biggr)^\frac1{2\alpha_{k-1}}\notag\\
&\qquad \times  |I_{{\hat k}}|^\frac1{2\alpha_{\hat k}}\|u\|_{\underline H^{1}(I_1\times B_1)}\notag\\
 &\quad \le \tau^\frac12n\biggl(c\frac{\|\omega^{-1}\|_{\underline L^q(B(\sigma n))}}{(\sigma-\rho)^{\hat\gamma}}(\|\omega\|_{\underline L^p(B(\sigma n))}+\tau^{-1})\biggr)^{\frac1{2}\sum_{k=0}^{\hat k-2} \nu^{k}}|B(n)|^{\frac12(1-\nu)^{\lfloor \log_2((\sigma-\rho)n)\rfloor-5}\sum_{k=0}^{\hat k-2}(\frac\nu{1-\nu})^k}\notag\\
 &\qquad \times\left(2^{\hat\gamma}\nu^{-2}\right)^{\frac1{2}\sum_{k=0}^\infty(k+2)\nu^{k}}\|u\|_{\underline H^{1}(I_1\times B_1)},
\end{align}
where we used $(1-\nu) \sum_{k=0}^{\hat k-2}\nu^k=(1-\nu) \frac{1-\nu^{\hat k-1}}{1-\nu} = 1-\nu^{\hat k-1}$ and $|I_{\hat k}|^{\frac{1}{2\alpha_{\hat k}}} \le (\tau n^2)^{\nu^{\hat k-1}/2}$
to deal with the $(\tau n^2)$-term\ER. To estimate the right-hand side of \eqref{est:moser:almostfinal}, we use \eqref{est:keysubsol2}, Jensen's inequality, and $\|\omega^{-1}\|_{\underline L^q(B_1)} \|\omega\|_{\underline L^p(B_1)} \ge 1$ to get
\begin{align*}
\|\sqrt{\omega}\nabla u\|_{\underline L^{2}(I_1\times B_1)}^2
&\stackrel{\eqref{est:keysubsol2}}{\lesssim}\frac{\|\omega\|_{\underline L^p(B(\sigma n))}+\tau^{-1}}{n^2(\sigma-\rho)^2} \|u \|_{2,2p',Q_\sigma(\tau,n)}^2
\\
| B_1|^{-\frac2d}\|\omega^{-1}\|_{\underline L^q(B_1)}^{-1}\|u\|_{\underline L^{2}(I_1\times B_1)}^2
&\uoset{\eqref{est:keysubsol2}}{}{\lesssim}
n^{-2}\|\omega\|_{\underline L^p(B(\sigma n))}\|u\|_{2,2p',Q_\sigma(\tau, n)}^2,
\end{align*}
and thus there exists $c=c(d,p,q)\in[1,\infty)$ such that
\begin{equation}\label{est:uh1ul22p}
\|u\|_{\underline H^{1}(I_1\times B_1)}\leq \biggl(c\frac{\|\omega\|_{\underline L^p(B(\sigma n))}+\tau^{-1}}{n^2(\sigma-\rho)^2}\biggr)^\frac1{2}\|u\|_{2,2p',Q_\sigma(\tau, n)}.
\end{equation}
Since $\|\omega\|_{\underline L^p(S)}\|\omega^{-1}\|_{\underline L^q(S)}\geq1$ for any $S\subset\mathbb B^d$, $\sum_{k=0}^\infty (1+k) \nu^{-k}\lesssim1$ and 
\begin{equation*}
|B(n)|^{\frac12(1-\nu)^{\lfloor \log_2((\sigma-\rho)n)\rfloor -5}\sum_{k=0}^{\hat k-2}(\frac\nu{1-\nu})^k}\leq |B(n)|^{\frac1{2}(1-\nu)^{\lfloor \log_2((\sigma-\rho)n)\rfloor -5}\frac1\nu(1-\nu)^{2-\hat k}},
\end{equation*}
which follows from $\sum_{k=0}^{\hat k-2} \bigl( \frac{\nu}{1-\nu}\bigr)^k \le \sum_{k=0}^{\hat k-2} \bigl( \frac{1}{1-\nu}\bigr)^k = \frac{(1-\nu)^{1-\hat k} - 1}{(1-\nu)^{-1}-1} \le \nu^{-1} (1-\nu)^{2-\hat k}$, \ER
we obtain with the choice $\hat k=\lfloor \frac12 \log_2((\sigma-\rho)n)\rfloor$, which thanks to $(\sigma-\rho)n \ge 2^9$ satisfies the necessary condition $\hat k \le \hat k_1$, that
\begin{align*}%\label{est:linftylp}
 &\|u\|_{L^\infty(Q_\rho(n))}\notag\\
 &\uoset{\eqref{lplq}}{}{=}\sup_{t\in[-\rho \tau n^2,0]}\|u_t\|_{L^\infty(B(\rho n))}\notag
 \\
 &\uoset{}{\eqref{lplq}}{\le} |B_{\hat k}|^\frac1{2\alpha_{\hat k}}\sup_{t\in I_{\hat k}}\|u_t\|_{\underline L^{2\alpha_{\hat k}}(B_{\hat k})}\notag
 \\
 &\stackrel{\eqref{est:moser:almostfinal}}{\lesssim}|B(n)|^{\frac12\nu^{\hat k-1}+\frac1{2\nu}(1-\nu)^{\lfloor \log_2((\sigma-\rho)n)\rfloor-3-\hat k}}\tau^\frac12 n\biggl(\frac{\|\omega^{-1}\|_{\underline L^q(B(\sigma n))}}{(\sigma-\rho)^{\hat\gamma}}(\|\omega\|_{\underline L^p(B(\sigma n))}+\tau^{-1})\biggr)^{\frac1{2(1-\nu)}}\|u\|_{\underline H^{1}(I_1\times B_1)}\notag
 \\
 &\stackrel{\eqref{est:uh1ul22p}}{\lesssim}
 |B(n)|^{\beta((\sigma-\rho)n)}\tau^\frac12\biggl(\frac{\|\omega^{-1}\|_{\underline L^q(B(\sigma n))}}{(\sigma-\rho)^{\hat\gamma+2(1-\nu)}}(\|\omega\|_{\underline L^p(B(\sigma n))}+\tau^{-1})^{2-\nu}\biggr)^{\frac1{2(1-\nu)}}\|u\|_{2,2p',Q_\sigma(\tau, n)}
\end{align*}
which proves the claim. Since $\nu \in (0,1)$, the last inequality follows from $\nu^{\hat k} \le \nu^{\frac12 \log_2((\sigma-\rho)n)-1}$ and $(1-\nu)^{\lfloor \log_2((\sigma-\rho)n)\rfloor - 3 - \hat k} \le (1-\nu)^{\frac12 \log_2((\sigma-\rho)n) - 4}$.

\step 3 Conclusion

For $k\in\mathbb N\cup\{0\}$, we set 
\begin{equation}\label{def:sigmakcor}
 \sigma_k:=\frac34-\frac1{4^{1+k}}\qquad B_k:=B(\sigma_k n),\qquad I_k:=[-\sigma_k \tau n^2,0].
\end{equation}
Combining the interpolation inequality 
\begin{equation}\label{ineq:esttlocalboundinterpolation}
 \|u\|_{\underline L^{2p'}(I_k \times B_k)}\leq \|u\|_{\underline L^{2}(I_k\times B_k)}^\frac1{p'} \|u\|_{L^\infty(I_k\times B_k)}^{\frac1{p}}
\end{equation}
(where $p'=\frac{p}{p-1}$) estimate \eqref{est:Tboundp} (with $\sigma=\sigma_k$ and $\rho=\sigma_{k-1}$) and Jensen inequality in the form $\|v\|_{2,2p',I\times B}\leq \|v\|_{\underline L^{2p'}(I\times B)}$, we obtain for $ k\leq \frac12( \log_2(3n)- 11)$ (which ensures $(\sigma_k-\sigma_{k-1})n\ge 2^9$) and $\gamma' := \frac{\hat \gamma}{2(1-\nu)} + 1$ that
\begin{align}\label{est:iteration:step3Tbound}
\|u\|_{L^\infty(I_{k-1}\times B_{k-1})}&\lesssim 4^{(k+1) \gamma'}|B(n)|^{\beta(\frac{3n}{4^{k+1}} )}\mathcal C(\omega,B_k,\tau)\|u\|_{\underline L^{2p'}(I_k\times B_k)}\notag
\\
&\le  4^{(k+1) \gamma'}|B(n)|^{\beta(\frac{3n}{4^{k+1}} )}  M \|u\|_{\underline L^{2}([-\frac34 \tau n^2,0]\times B(\frac34 n))}^\frac1{p'} \|u\|_{L^\infty(I_k\times B_k)}^{\frac1{p}},
\end{align}
where
\begin{equation}\label{def:cc}
 M:=c\mathcal C(\omega,B(n),\tau)\in[1,\infty)
\end{equation}
with a suitable constant $c=c(d,p,q)\in[1,\infty)$. Iterating estimate \eqref{est:iteration:step3Tbound}, we obtain for every $\hat k \le  \frac12 (\log_2(3n)-11)$ that
\begin{align*}
&\|u\|_{L^\infty(I_0\times B_0)}\notag\\
&\le 4^{\gamma'  \sum_{k=0}^{\hat k-1}(k+2)p^{-k}}|B(n)|^{\sum_{k=0}^{\hat k -1}\beta(\frac{3n}{4^{k+2}})p^{-k}}( M  \|u\|_{\underline L^{2}(Q_\frac34(\tau,n))}^{1-\frac1p})^{\sum_{k=0}^{\hat k-1}p^{-k}}\|u\|_{ L^{\infty}(I_{\hat k}\times B_{\hat k})}^{p^{-\hat k}}.
\end{align*}
To estimate further the last factor on the right-hand side, we use \eqref{est:Tboundp} and the discrete estimate~\eqref{lplq} in the form $\|v\|_{\underline L^{2p'}(B(n))}\leq |B(n)|^\frac1{2p}\|v\|_{\underline L^{2}(B(n))}$:
\begin{equation}\label{est:Tboundalmostfinal2}
\|u\|_{ L^{\infty}(Q_{\frac34}(\tau,n))}\stackrel{\eqref{est:Tboundp}}{\lesssim}  |B(n)|^{\beta(\frac{n}4)}M\|u\|_{2,2p',Q_1(\tau,n)}\leq   |B(n)|^{\beta(\frac{n}4)+\frac1{2p}}M\|u\|_{\underline L^2(Q_1(\tau,n))}
\end{equation}
and thus we obtain
\begin{align}\label{est:Tboundalmostfinal1}
&\|u\|_{L^\infty(Q_{\frac12}(\tau,n))}\lesssim |B(n)|^{\sum_{k=0}^{\hat k -1}\beta(\frac{3n}{4^{k+2}})p^{-k}+\beta(\frac{n}4)p^{-\hat k}+\frac12 p^{-\hat k-1}}M^{\sum_{k=0}^{\hat k}p^{-k}} \|u\|_{\underline L^{2}(Q_{1}(\tau,n))},
\end{align}
where in this last inequality we used that $\sum_{k=0}^{\hat k-1} (k+2)p^{-k} \le \sum_{k=0}^{\infty} (k+2)p^{-k} \le c(p)$, which together with $\gamma' > 0$ gives $4^{ \gamma' \sum_{k=0}^{\hat k-1}(k+2)p^{-k}} \le c(d,p,q)$. 
Since $\sum_{k=0}^{\infty}\frac1{p^k}=\frac{1}{p-1}$, the claimed estimate \eqref{est:Tbound} follows from \eqref{est:Tboundalmostfinal1} provided we find $\hat k$ (depending on $n$) such that the prefactor on the right-hand side in \eqref{est:Tboundalmostfinal1} is uniformly bounded in $n$. Hence, it is left to find a sequence $(\hat k_n)_{n\in\mathbb N}\subset\mathbb N$ satisfying $\hat k_n \le \BR2 \frac12 (\log_2(3n)-11)$ for all $n$ sufficiently large such that 
\begin{equation}\label{est:limsupBn}
\limsup_{n\to\infty}|B(n)|^{\sum_{k=0}^{\hat k_n -1}\beta(\frac{3n}{4^{k+1}})p^{-k}+\beta(\frac{n}4)p^{-\hat k_n}+\frac12 p^{-\hat k_n-1}}\leq c(d,p,q)<\infty.
\end{equation}
First observe that $\beta(z) \stackrel{\eqref{def:beta}}{=} \frac{1}{2\nu} \bigl( \nu^{ \frac12 \log_2(z) - 1} + (1-\nu)^{\frac12 \log_2(z) - 4} \bigr) \le cz^{-\alpha}$ for some $c=c(d,p,q)$ and $\alpha=\alpha(d,p,q) > 0$. Indeed, \BR2 we have $\nu^{\frac12 \log_2(z)} = 2^{\frac12 \log_2(\nu) \log_2(z)} = z^{\frac12\log_2(\nu)}$ and thus $\frac{1}{2\nu} \nu^{\frac12\log_2(z)-1} \le cz^{-\alpha}$ follows from $\log_2(\nu) < 0$ (recall $\nu=\nu(d,p,q) \in(0,1)$). \ER The same argument works also for the $(1-\nu)$-part of $\beta$, hence giving the estimate $\beta(z) \le cz^{-\alpha}$. 
This estimate then implies 
%For this, we observe that (using $\nu\in(0,1)$)
%
\begin{align*}
 \sum_{k=0}^{\hat k-1}\beta(\frac{3n}{4^{k+1}}) p^{-k} &\le \sum_{k=0}^{\hat k-1}\beta(\frac{3n}{4^{k+1}})
 \le \frac{c}{n^\alpha} \sum_{k=0}^{\hat k - 1} 4^{k\alpha} \le c \biggl( \frac{4^{\hat k}}{n} \biggr)^{\alpha}. 
%  \frac12\sum_{k=0}^{\hat k -1}\biggl(\nu^{\frac12\log_2(\frac{3n}{4^{k+1}})-2}+\frac1\nu(1-\nu)^{\frac12\log_2(\frac{3n}{4^{k+1}})-1}\biggr)\notag\\
%  =& \frac12\nu^{\frac12\log_2(3n)-3}\sum_{k=0}^{\hat k -1}\nu^{-k}+\frac1{2\nu}(1-\nu)^{\frac12\log_2(3n)-2}\sum_{k=0}^{\hat k-1}(1-\nu)^{-k}\notag\\
%  \leq&\frac12\nu^{\frac12\log_2(3n)-3-\hat k}\frac\nu{1-\nu}+\frac1{2\nu^2}(1-\nu)^{\frac12\log_2(3n)-2-\hat k}.
\end{align*}
The desired estimate \eqref{est:limsupBn} follows with the choice $\hat k_n:=\lfloor \frac14 \log_2(3n)\rfloor$, which in particular yields $\frac{4^{\hat k}}{n} \le cn^{-\frac12}$, and the elementary observation that for all $c_1,c_2,c_3>0$ and $\mu\in(0,1)$ we have
\begin{equation*}
 \limsup_{n\to\infty}n^{c_1\mu^{c_2\log(c_3n)}}=\limsup_{n\to\infty}n^{c_1(c_3 n)^{c_2\log \mu}}=\limsup_{n\to\infty}\exp(c_1(c_3 n)^{c_2\log \mu}\log(n))\leq c(c_1,c_2,c_3,\mu)<\infty.
\end{equation*} 

Finally observe that the chosen $\hat k_n$ satisfies $\hat k_n \le \BR2\frac 12 (\log_2(3n) - 11)$ since $\log_2(3n) \ge 21$ \BR2(here we use assumption $n\geq 2^{20}$).\ER 
\end{proof}

For later applications to the heat kernel (see Proposition~\ref{C:boundheatkernel} below) it is useful to replace the $L^2$-norm on the right-hand side in \eqref{est:Tbound} by the $L^1$-norm. This can be achieved by a similar argument as in Step~3 of the proof of Theorem~\ref{T:bound} by replacing in the interpolation inequality \eqref{ineq:esttlocalboundinterpolation} the exponents $\frac1{p'}$ and $1-\frac1{p'}$ with $\frac1{2p'}$ and $1-\frac1{2p'}$, respectively. Since we do not know how to replace the $L^2$-norm on the right-hand side in \eqref{est:Tboundalmostfinal2} by the $L^1$ norm we keep the $L^\infty$ norm of $u$ (to a very small power) on the right-hand side and obtain the following
\begin{corollary}\label{C:Tbound}
 Under the assumptions of Theorem~\ref{T:bound}, there exists $c=c(d,p,q)\in[1,\infty)$ such that the following is true: Let $u>0$ be a \BR2subcaloric function \ER in \BR2$Q_1(t_0,x_0,\tau,n)$ with $t_0\in\R$, $x_0\in\mathbb Z^d$, \ER $n\ge  2^{20}$ and $\tau>0$. Then, 
\begin{equation*}%\label{est:TboundC}
\|u\|_{L^\infty(Q_\frac12(\BR2t_0,x_0,\ER \tau,n))}\leq c\mathcal C(\omega,\BR2B(x_0,n)\ER,\tau)^\frac{2p}{p-1}\|u\|_{\underline L^{1}(Q_1(\BR2t_0,x_0,\ER\tau,n))}^{1-(\frac12(1+\frac1p))^{\hat k}}\|u\|_{L^{\infty}(Q_1(\BR2t_0,x_0,\ER\tau,n))}^{(\frac12(1+\frac1p))^{\hat k}}
\end{equation*}
%\begin{equation*}%\label{est:TboundC}
%\|u\|_{L^\infty(Q_\frac12(\tau,n))}\leq c\mathcal C(\omega,B(n))^2\biggl((1\vee \tau)^{\frac12}\,(1+\tau^{-1})^{\frac{1}{2}\frac{2-\nu}{1-\nu}}\biggr)^\frac{2p}{p-1}\|u\|_{\underline L^{1}(Q_1(\tau,n))}^{1-(\frac12(1+\frac1p))^{\hat k}}\|u\|_{L^{\infty}(Q_1(\tau,n))}^{(\frac12(1+\frac1p))^{\hat k}}
%\end{equation*}
%
where $\hat k:=\lfloor\frac14\log_2(3n)\rfloor$.
\end{corollary}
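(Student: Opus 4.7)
The plan is to mimic Step~3 of the proof of Theorem~\ref{T:bound} essentially verbatim, replacing the interpolation between $L^2$ and $L^\infty$ by an interpolation between $L^1$ and $L^\infty$, and to show that this only changes the geometric-series exponents in a controlled way.

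Concretely, I would start from the same telescoping cylinders $\sigma_k := \tfrac34 - 4^{-(1+k)}$, $B_k := B(\sigma_k n)$, $I_k := [-\sigma_k \tau n^2, 0]$ as in \eqref{def:sigmakcor}, and apply the bound \eqref{est:Tboundp} on each pair $(\sigma_{k-1}, \sigma_k)$. The only change is the interpolation step \eqref{ineq:esttlocalboundinterpolation}, which I would replace by the Hölder inequality
\begin{equation*}
 \|u\|_{\underline L^{2p'}(I_k \times B_k)} \le \|u\|_{\underline L^1(I_k \times B_k)}^{\frac1{2p'}} \|u\|_{L^\infty(I_k \times B_k)}^{1 - \frac1{2p'}},
\end{equation*}
combined as before with $\|u\|_{2,2p',I_k \times B_k} \le \|u\|_{\underline L^{2p'}(I_k \times B_k)}$. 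Writing $a := 1 - \tfrac1{2p'} = \tfrac12(1 + \tfrac1p) \in (\tfrac12, 1)$, this yields the one-step recursion
\begin{equation*}
 \|u\|_{L^\infty(I_{k-1} \times B_{k-1})} \lesssim 4^{(k+1)\gamma'} |B(n)|^{\beta(3n/4^{k+1})} M \, \|u\|_{\underline L^1(Q_1(\tau,n))}^{1-a} \|u\|_{L^\infty(I_k \times B_k)}^{a},
\end{equation*}
where $M = c\,\mathcal C(\omega, B(x_0,n),\tau)$ as in \eqref{def:cc}.

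Iterating this inequality $\hat k$ times produces an exponent $a^{\hat k}$ on the final $\|u\|_{L^\infty(I_{\hat k} \times B_{\hat k})}$ factor and an exponent $(1-a)\sum_{k=0}^{\hat k-1} a^k = 1 - a^{\hat k}$ on $\|u\|_{\underline L^1(Q_1)}$. Since $a = \tfrac12(1 + \tfrac1p)$, these match the exponents $(\tfrac12(1+\tfrac1p))^{\hat k}$ and $1 - (\tfrac12(1+\tfrac1p))^{\hat k}$ in the claimed bound. The power accumulated on $M$ is $\sum_{k=0}^{\hat k-1} a^k \le \sum_{k=0}^\infty a^k = \tfrac1{1-a} = \tfrac{2p}{p-1}$, which gives the factor $\mathcal C^{\frac{2p}{p-1}}$ in the statement. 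The prefactors $4^{(k+1)\gamma'}$ assemble, as in the original Step~3, into a constant depending only on $d,p,q$, since $\sum_{k \ge 0}(k+2) a^k < \infty$.

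The only genuine obstacle, as in the proof of Theorem~\ref{T:bound}, is to keep the factor $|B(n)|^{\sum \beta(\cdot) a^k}$ bounded uniformly in $n$. This is handled exactly as in \eqref{est:limsupBn}: one selects $\hat k := \hat k_n = \lfloor \tfrac14 \log_2(3n)\rfloor$, which on the one hand satisfies the admissibility condition $\hat k_n \le \tfrac12(\log_2(3n) - 11)$ required for \eqref{est:Tboundp} to be applicable at each step, and on the other hand makes $4^{\hat k_n}/n \lesssim n^{-1/2}$, so that the estimate $\beta(z) \le c z^{-\alpha}$ (with $\alpha = \alpha(d,p,q) > 0$, proved in the concluding lines of Theorem~\ref{T:bound}) gives $\sum_{k=0}^{\hat k-1} \beta(3n/4^{k+1}) a^k \lesssim n^{-\alpha/2}$ and hence $|B(n)|^{\sum \beta(\cdot) a^k} \lesssim 1$. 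To control the remaining $\|u\|_{L^\infty(I_{\hat k} \times B_{\hat k})}$ contribution in the last step, one does \emph{not} invoke \eqref{est:Tboundalmostfinal2} (which would reintroduce an $L^2$ norm); instead one simply keeps this factor as $\|u\|_{L^\infty(Q_1(\tau,n))}^{a^{\hat k_n}}$, which is precisely the second factor appearing on the right-hand side of the corollary. This concludes the proof.
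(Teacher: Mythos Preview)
Your proposal is correct and matches the paper's own approach essentially verbatim: the paper explicitly says the corollary follows by rerunning Step~3 of the proof of Theorem~\ref{T:bound} with the interpolation exponents $\tfrac1{p'},\,1-\tfrac1{p'}$ replaced by $\tfrac1{2p'},\,1-\tfrac1{2p'}$, and by keeping the residual $L^\infty$ factor rather than invoking \eqref{est:Tboundalmostfinal2}. Your identification $a=1-\tfrac1{2p'}=\tfrac12(1+\tfrac1p)$, the resulting exponent $\sum_{k\ge0}a^k=\tfrac{2p}{p-1}$ on $M$, and the treatment of the $|B(n)|^{\beta(\cdot)}$ prefactor via the same choice $\hat k=\lfloor\tfrac14\log_2(3n)\rfloor$ are all exactly as intended.
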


\subsection{Proof of Theorem~\ref{T:oscillation}}

%\begin{theorem}\label{T:oscillation1}
%Fix $d\geq2$, $\omega\in\Omega$ and $p\in(1,\infty]$, $q\in(\frac{d}2,\infty]$ satisfying $\frac1p+\frac1q<\frac2{d-1}$. There exists 
%%
%$$\theta=\theta(d,\|\omega\|_{\underline L^p(B(n))},\|\omega^{-1}\|_{\underline L^q(B(n))})\in(0,1),$$
%%
%which is continuous and monotonically increasing in the last two variables such that the following is true: Let $u$ be such that $\partial_t u-\mathcal L^\omega u=0$ in $Q(n)$ and $n\geq 2^{10}$. Then,
%%
%\begin{equation}
% {\rm osc}\,(u,Q_\frac18(n))\leq \theta\, {\rm osc}\,(u,Q(n)).%{\rm osc}_{Q_\frac18(n)}\,u\leq \theta\, {\rm osc}_{Q(n)} u.
%\end{equation} 
%%
%\end{theorem}
%
With the local boundedness statement Theorem~\ref{T:bound}, the oscillation decay can be proven by already established methods. The following argument is essentially the parabolic version (in the form of \cite[Section~5.2]{WuYinWang}) of Moser's proof, see \cite{Moser60}, of the De Giorgi theorem in the elliptic case. In recent works \cite{AT19,CS19} this strategy is already adapted to the discrete and degenerate situation that we consider here but under more restrictive summability assumption on $\omega$ and $\omega^{-1}$. However, in order to keep the presentation self-contained we provide a detailed proof below.

First, we introduce a suitable regularization of the map $z\mapsto (-\log(z))_+$, defined by
\begin{equation}\label{def:g}
g(z)=\begin{cases}-\log(z)&\mbox{if $z\in(0,\bar c]$},\\\frac{(z-1)^2}{2\bar c(1-\bar c)}&\mbox{if $z\in(\bar c,1]$}\\0&\mbox{if $z\geq1$}\end{cases},
\end{equation}
where $\bar c\in[\frac14,\frac13]$ is the smallest solution of $2c\log(\frac1c)=1-c$. Notice that $g\in C^1((0,\infty))$ is non-negative, convex and non-increasing. 
\begin{lemma}\label{L:weakHarnack}
Fix $d\geq2$ \BR2and $\omega\in\Omega$\ER. Suppose that $u>0$ satisfies $\partial_tu-\mathcal L^\omega u\geq0$ in \BR2$Q(n):=[-n^2,0]\times B(n)$\ER. Fix $\lambda\in(0,1)$ and suppose
\begin{equation}\label{ass:LweakHarnack}
{\rm m}(\{(x,t)\in Q(n),\, u_t(x)\geq1\})\geq \lambda {\rm m}(Q(n))
\end{equation}
(see \eqref{def:m} for the definition of ${\rm m}(\cdot)$). Then, for any 
\begin{equation}\label{ass:sigma12}
\sigma_1\in(0,\lambda) \textrm{ and } \sigma_2\in(\lambda,1) \textrm{ satisfying } \frac{1-\lambda}{1-\sigma_1}\frac{|B(n)|}{|B(\sigma_2 n)|}\le \frac{17}{24}, \quad \textrm{and } \quad n\geq\frac1{1-\sigma_2},
%\mbox{$\sigma_1\in(0,\lambda)$ and $\sigma_2\in(\lambda,1)$ satisfying $\frac{1-\lambda}{1-\sigma_1}\frac{|B(n)|}{|B(\sigma_2 n)|}\leq\frac23$ and}\quad n\geq\frac1{1-\sigma_2},
\end{equation}
there exists $h=h(d,\lambda,\|\omega\|_{\underline L^1(B(n))},\sigma_2)\in(0,1)$ such that
\begin{equation}\label{claim:LweakHarnack}
|\{x\in B(\sigma_2 n),\, u_t(x)\geq h\}|\geq \frac14 |B(\sigma_2n)|\quad\mbox{for all }\BR2-\sigma_1 n^2\leq t\leq 0.
\end{equation}
Moreover, there exists $c=c(d)<\infty$ such that \eqref{claim:LweakHarnack} holds with 
\begin{equation}\label{est:minh}
 h=\exp\biggl(-c\biggl(1+\frac{\|\omega\|_{\underline L^1(B(n))}}{(1-\sigma_2)^2\lambda^{d}}\biggr)\biggr).
\end{equation}
\end{lemma}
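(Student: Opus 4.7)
The strategy is the classical Moser expansion-of-positivity argument adapted to the discrete degenerate setting. The idea is to introduce $v_t := g(u_t + \epsilon)$ with $\epsilon > 0$ small (to be chosen later as $e^{-M}$), which is $\ge 0$, vanishes whenever $u_t \ge 1$, and is \emph{a priori bounded} by $g(\epsilon) = -\log\epsilon = M$ thanks to the shift by $\epsilon$. We derive a logarithmic Caccioppoli (energy) inequality for $v$, propagate information from a well-chosen initial time $t_0$ to later times, and extract \eqref{claim:LweakHarnack} by Chebyshev. Note that $u_\epsilon := u + \epsilon$ is itself supercaloric (since $\mathcal L^\omega$ annihilates constants), so the log-energy estimate applies verbatim.

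\textbf{Log-energy inequality.} Testing the supercaloric inequality $\partial_t u_\epsilon \ge \mathcal L^\omega u_\epsilon$ against the nonpositive function $\eta^2 g'(u_\epsilon)$ for a spatial cutoff $\eta: \mathbb Z^d \to [0,1]$, then summing by parts via \eqref{sumbyparts} and \eqref{chainrule}, yields after some algebra
\begin{equation*}
    \frac{d}{dt} \sum_x \eta^2(x) v_t(x) + c_1 \sum_{\ee \in \mathbb B^d} \eta^2(\ee)\,\omega(\ee)\,(\nabla v_t(\ee))^2 \;\le\; c_2 \sum_{\ee \in \mathbb B^d} \omega(\ee)(\nabla \eta(\ee))^2
\end{equation*}
with absolute constants $c_1, c_2 > 0$. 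The ``good'' gradient term on the left comes from convexity of $g$ (monotonicity of $g'$ gives $(g'(u_\epsilon(\bar \ee)) - g'(u_\epsilon(\underline \ee)))(u_\epsilon(\bar \ee) - u_\epsilon(\underline \ee)) \ge 0$); the cross term involving $\nabla \eta^2$ is absorbed via Young's inequality provided $(g')^2 \le C g''$ uniformly. The latter holds by the explicit form of $g$: on $(0,\bar c]$ one has $(g')^2 = g'' = u^{-2}$, while on $(\bar c, 1]$ one computes $(g')^2/g'' = (u-1)^2/(\bar c(1-\bar c)) \le (1-\bar c)/\bar c \le 3$; the equation $2\bar c \log(1/\bar c) = 1-\bar c$ defining $\bar c \in [\tfrac14,\tfrac13]$ precisely forces continuity of $g$ and $g'$ across $u = \bar c$. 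Carrying out the discrete chain-rule bookkeeping is the main technical step.

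\textbf{Cutoff, pigeonhole in time, and Chebyshev.} Take $\eta \equiv 1$ on $B(\sigma_2 n)$, $\eta \equiv 0$ off $B(n)$, with $\|\nabla \eta\|_\infty \le 2/(n(1-\sigma_2))$, so that $\sum_\ee \omega(\ee)(\nabla \eta(\ee))^2 \le c\,|B(n)|\,\|\omega\|_{\underline L^1(B(n))}/(n(1-\sigma_2))^2$. Choose $\epsilon := e^{-M}$ for a constant $M \ge 1$ to be fixed. Dropping the nonnegative gradient term and integrating in time from $t_0$ to $t \in [t_0, 0]$ gives
\begin{equation*}
    \sum_x \eta^2 v_t \;\le\; \sum_x \eta^2 v_{t_0} \;+\; \frac{c_3 (t-t_0)\,|B(n)|\,\|\omega\|_{\underline L^1(B(n))}}{n^2(1-\sigma_2)^2}.
\end{equation*}
Hypothesis \eqref{ass:LweakHarnack} combined with Fubini yields $\int_{-n^2}^{-\sigma_1 n^2} |\{u_{t_0} \ge 1\}\cap B(n)|\,dt_0 \ge (\lambda - \sigma_1) n^2 |B(n)|$, so by pigeonhole there exists $t_0 \in [-n^2, -\sigma_1 n^2]$ with $|\{x\in B(n): u_{t_0}(x)\ge 1\}| \ge \frac{\lambda-\sigma_1}{1-\sigma_1}|B(n)|$; on this set $v_{t_0} = 0$. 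Since everywhere $v_{t_0} \le M$,
\begin{equation*}
    \sum_x \eta^2 v_{t_0} \;\le\; M\,\frac{(1-\lambda)|B(n)|}{1-\sigma_1}.
\end{equation*}
Plugging into the evolution estimate (using $t - t_0 \le n^2$) and applying Chebyshev on $B(\sigma_2 n)$ at level $M-1$,
\begin{equation*}
    \frac{|\{x \in B(\sigma_2 n):\, v_t(x) > M-1\}|}{|B(\sigma_2 n)|} \;\le\; \frac{M}{M-1}\frac{(1-\lambda)}{1-\sigma_1}\frac{|B(n)|}{|B(\sigma_2 n)|} + \frac{c_3\,\|\omega\|_{\underline L^1(B(n))}}{(M-1)(1-\sigma_2)^2}\frac{|B(n)|}{|B(\sigma_2 n)|}.
\end{equation*}
The hypothesis \eqref{ass:sigma12} bounds $\frac{1-\lambda}{1-\sigma_1}\frac{|B(n)|}{|B(\sigma_2 n)|} \le \frac{17}{24}$, and the same condition implies $|B(n)|/|B(\sigma_2 n)| \le c/\lambda^d$. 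Choosing $M = c_4\bigl(1 + \|\omega\|_{\underline L^1(B(n))}/((1-\sigma_2)^2 \lambda^d)\bigr)$ with $c_4 = c_4(d)$ large enough makes the first term $\le \tfrac{17}{24} + \tfrac{1}{48}$ and the second $\le \tfrac{1}{48}$, totalling $\le \tfrac{3}{4}$. Finally, $\{v_t > M-1\} = \{u_{\epsilon,t} < e^{-(M-1)}\} = \{u_t < (e-1)e^{-M}\}$, so $|\{x\in B(\sigma_2 n): u_t(x) \ge (e-1)e^{-M}\}| \ge \tfrac14 |B(\sigma_2 n)|$, giving \eqref{claim:LweakHarnack} with $h := (e-1)e^{-M}$ and the explicit form \eqref{est:minh}.
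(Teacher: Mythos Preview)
Your proof is correct and follows essentially the same approach as the paper: derive a log-energy inequality for $g(u+\text{shift})$, pigeonhole in time to locate a good initial slice, integrate forward, and read off the level-set estimate. The only cosmetic difference is bookkeeping: the paper shifts by $h$ and compares the levels $g(h)$ and $g(2h)$, while you shift by $\epsilon=e^{-M}$ and apply Chebyshev at level $M-1$; with $h=e^{-M}$ these are the same computation. Two minor points worth tightening: the cutoff should vanish on $\mathbb Z^d\setminus B(n-1)$ (not merely off $B(n)$) so that summation by parts stays inside the region where $u$ is supercaloric---this is exactly why the hypothesis $n\ge 1/(1-\sigma_2)$ is needed; and the identification $\{v_t>M-1\}=\{u_t<(e-1)e^{-M}\}$ uses that $g$ equals $-\log$ on this set, which requires $M$ large enough (say $M>5/2$) so that the quadratic piece of $g$ cannot exceed $M-1$.
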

%\footnote{\BR2 If I am correct, $Q(n)$ is centered around $x_0$. An only $Q_1(n)$, not $Q(n)$, is defined :-).}

\begin{proof}[Proof of Lemma~\ref{L:weakHarnack}]

The proof follows the argument of \cite[Lemma~5.2.3]{WuYinWang} (and discrete variants \cite{AT19,CS19}).

\step 1 We claim that there exists $c\in[1,\infty)$ such that for every $\eta:\mathbb Z^d\to[0,1]$ with $ \eta \equiv 0$ in $\mathbb Z^d\setminus B(n-1)$ %holds
\begin{equation}\label{est:energyg}
 \frac{\mathrm{d}}{\dt}\|\eta^2 g(u_t)\|_{\underline L^1(B(n))}+\frac1{|B(n)|}\sum_{\ee\in\mathbb B^d}\varphi_\eta(\ee)\omega(\ee)(\nabla g(u_t)(\ee))^2\leq c\|\omega\|_{\underline L^1(B(n))}\|\nabla \eta\|_{L^\infty(B(n))}^2{\rm osr}(\eta)^2,
\end{equation}
where ${\rm osr}(\eta):=\max\{\max\{\frac{\eta(y)}{\eta(x)},1\}\,|\, \{x,y\}\in\mathbb B^d,\,\eta(x)\neq0\}$ and $\varphi_\eta(\ee):=\min\{\eta^2(\overline \ee),\eta^2(\underline \ee)\}$ for every $\ee=(\overline \ee,\underline \ee)\in\mathbb B^d$.

\smallskip

Since $\partial_t u-\mathcal L^\omega u\geq0$, we have
\begin{align*}
\partial_t\sum_{x\in\mathbb Z^d}\eta(x)^2g(u_t(x))=&\sum_{x\in\mathbb Z^d}\eta(x)^2g'(u_t(x))\partial_tu_t(x)\\
\leq& \sum_{x\in\mathbb Z^d}\eta(x)^2g'(u_t(x))\mathcal L^\omega u_t(x)\\
=&-\sum_{\ee\in\mathbb B^d}\nabla (\eta^2g'(u_t))(\ee) \omega(\ee)\nabla u_t(\ee).%\mathcal L^\omega u_t(x)
\end{align*}
Since $\bar c \in [\frac 14,\frac 13]$, we have $\bar c (1-\bar c) \ge \frac{3}{16}$, which gives $\frac13 g'(r)^2\leq g''(r)$ and $-rg'(r)\leq \frac43$ for almost all $r>0$. This combined with Lemma~\ref{L:appendixMartin} then implies 
\begin{equation*}
 -\sum_{\ee\in\mathbb B^d}\nabla (\eta^2g'(u_t))(\ee) \omega(\ee)\nabla u_t(\ee)\leq -\frac16\sum_{\ee\in\mathbb B^d}\varphi_\eta(\ee)\omega(\ee)(\nabla g(u_t)(\ee))^2+6{\rm osr}\,(\eta)^2\sum_{\ee\in\mathbb B^d}\omega(\ee)(\nabla \eta(\ee))^2
\end{equation*}
and thus the claim follows.

\step 2 Conclusion.

Let $\sigma_1,\sigma_2>0$ be such that \eqref{ass:sigma12} is satisfied. For given $h\in(0,1)$ (specified below), we set $w_t(x):=g(u_t(x)+h)$ and
\begin{equation}
\overline \lambda(t):=|\{x\in B(n),\,u_t(x)\geq1\}|,\qquad \mathcal N_t(h):=\{x\in B(\sigma_2 n),\,u_t(x)\geq h\}.
\end{equation} 
Assumption \eqref{ass:LweakHarnack} yields
\begin{equation*}
 \BR2\int_{-n^2}^{0}\ER\overline \lambda(t)\dt\geq\lambda {\rm m}(Q(n))=\lambda n^2|B(n)|,
\end{equation*}
and in combination with the obvious inequality
\begin{equation*}
 \BR2\int_{-\sigma_1 n^2}^{0}\ER\overline \lambda(t)\dt\leq\sigma_1 n^2 |B(n)|,
\end{equation*}
we obtain
\begin{equation*}
 \BR2\int_{-n^2}^{-\sigma_1 n^2}\ER\overline \lambda(t)\dt\geq(\lambda-\sigma_1)n^2 |B(n)|.
\end{equation*}
By the mean value theorem, we find $\tau\in\BR2[-n^2,-\sigma_1n^2]$ such that
\begin{equation}\label{est:lambdatau}
 \overline \lambda(\tau)\geq \frac{\lambda-\sigma_1}{1-\sigma_1}|B(n)|.
\end{equation}
For any $t_2\in\BR2[-\sigma_1n^2,0]$, we deduce from \eqref{est:energyg} (applied to the positive supercaloric function $u+h$) and $\sigma_2\in(\lambda,1)$ together with an 'affine cut-off' $\eta$ satisfying 
\begin{equation}\label{ass:affincutoff}
 \eta\equiv1\;\mbox{in $B(\sigma_2 n)$},\quad\eta\equiv0\mbox{ in $\mathbb Z^d\setminus B(n-1)$,}\quad |\nabla \eta|\lesssim \frac1{n(1-\sigma_2)},\quad {\rm osr}(\eta^2)\leq2
\end{equation}
($n\geq \frac1{1-\sigma_2}$, see \eqref{ass:sigma12}, ensures existence of such $\eta$) that
\begin{equation}\label{est:wt2:1}
\| w_{t_2}\|_{L^1(B(\sigma_2 n))}\leq c\frac{|B(n)|}{(1-\sigma_2)^2}\|\omega\|_{\underline L^1(B(n))}+\|w_\tau\|_{ L^1(B( n))}.
\end{equation}
Since $g\geq0$ is non-increasing, we can estimate the left-hand side in \eqref{est:wt2:1} from below as
\begin{equation}\label{est:wt2:2}
\|w_{t_2}\|_{ L^1(B(\sigma_2 n))}\geq \sum_{x\in B(\sigma_2 n)\setminus \mathcal N_{t_2}}w_{t_2}(x)\geq|B(\sigma_2 n)\setminus \mathcal N_{t_2}(h)|g(2h).\end{equation}
Combining $w_t=0$ on $\{x\in B(n),\,u_t(x)\geq1\}$ (recall $g(z)=0$ for $z\geq1$), the monotonicity of $g$ and \eqref{est:lambdatau}, we obtain
\begin{align}\label{est:wt2:3}
\|w_\tau\|_{L^1(B(n))}\leq(|B(n)|-\overline \lambda(\tau))g(h)\leq \biggl(1-\frac{\lambda-\sigma_1}{1-\sigma_1}\biggr)|B(n)|g(h)=\frac{1-\lambda}{1-\sigma_1}\frac{|B(n)|}{|B(\sigma_2 n)|}|B(\sigma_2 n)|g(h).
\end{align}
Estimates \eqref{est:wt2:1}-\eqref{est:wt2:3} and assumption $\frac{1-\lambda}{1-\sigma_1}\frac{|B(n)|}{|B(\sigma_2 n)|}\le \frac{17}{24}$ yield
\begin{equation}\label{est:LweakHarnack:finalkey}
|B(\sigma_2 n)\setminus \mathcal N_{t_2}(h)|\leq \biggl(\frac{c|B(n)|\|\omega\|_{\underline L^1(B(n))}}{|B(\sigma_2 n)|(1-\sigma_2)^2g(2h)}+\frac {17}{24} \frac{g(h)}{g(2h)}\biggr)|B(\sigma_2n)|.
\end{equation}
Using $g(s)=-\log(s)$ for $s\in(0,\frac14)$ \BR2 and $\frac{17}{24}<\frac34$\ER, we can choose $h>0$ sufficiently small such that
\begin{equation*}
|B(\sigma_2 n)\setminus \mathcal N_{t_2}(h)|\leq \frac34|B(\sigma_2n)|,
\end{equation*}
which by the arbitrariness of $t_2\in\BR2[-\sigma_1n^2,0]$ yield \eqref{claim:LweakHarnack}. The lower estimate on $h$ can easily deduce from \eqref{est:LweakHarnack:finalkey} and the elementary estimate $\frac{B(n)}{B(\sigma_2 n)}\leq\frac{B(n)}{B(\lambda n)}\leq (\BR2\frac3\lambda)^d$.

\end{proof}

Lemma~\ref{L:weakHarnack} and Theorem~\ref{T:bound} yield the following weak Harnack inequality
\begin{theorem}[Weak Harnack inequality]\label{T:weakHarnack}
Fix $d\geq2$, $\omega\in\Omega$ and $p\in(1,\infty]$, $q\in(\frac{d}2,\infty]$ satisfying $\frac1p+\frac1q<\frac2{d-1}$. Let $u>0$ be such that $\partial_t u-\mathcal L^\omega u\geq0$ in $Q(n)\BR2=[-n^2,0]\times B(n)$. Suppose there exist $\e>0$ and $\lambda\in(0,1)$ such that
\begin{equation}\label{assumption:weakharnack}
 {\rm m}(\{(x,t)\in Q(n),\, u_t(x)\geq\e\})\geq \lambda {\rm m}(Q(n)).
\end{equation}
Suppose that $\sigma_1$ and $\sigma_2$ satisfy \eqref{ass:sigma12} and set $\sigma:=\min\{\sqrt\sigma_1,\sigma_2\}$. There exists a constant 
$$
\gamma=\gamma(d,\e,\lambda,p,q,\|\omega\|_{\underline L^p(B(n))},\|\omega^{-1}\|_{\underline L^q(B(n))},\sigma_1,\sigma_2)>0
$$
such that if $\sigma n\ge  2^{20}$, then %for every $\sigma\in[\frac12,\lambda)$ it holds
\begin{equation}\label{est:weakharnackclaim}
 u_t(x)\geq\gamma\quad\mbox{for all $(t,x)\in Q_\frac12(\lfloor \sigma n\rfloor)$.}
\end{equation}
Moreover there exists $c=c(d,\lambda,p,q,\sigma_1,\sigma_2)\in[1,\infty)$ such that \eqref{est:weakharnackclaim} holds with
\begin{equation}\label{est:mingamma}
\gamma= \e\exp\left(-c\left(1+\|\omega\|_{\underline L^1(B(n))}+\mathcal C(\omega,B(n))^{\frac{2p}{p-1}}\|\omega^{-1}\|_{\underline L^\frac{d}2(B(n))}\right)\right),
\end{equation}
where $\mathcal C(\omega,B(n)):=\mathcal C(\omega,B(n),1)$ and $\mathcal C(\omega,B(n),1)$ is defined in \eqref{def:C}.
\end{theorem}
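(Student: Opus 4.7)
The plan is to combine Lemma~\ref{L:weakHarnack} (which provides a measure-theoretic lower bound on $u$ on each spatial slice) with Corollary~\ref{C:Tbound} (Moser-type local boundedness), applied to the subcaloric function $v := (\log(h/u))_+$. First I rescale $u \mapsto u/\varepsilon$ to reduce to the case $\varepsilon = 1$: the rescaled function is still positive and supercaloric, and satisfies \eqref{assumption:weakharnack} with threshold~$1$. Applying Lemma~\ref{L:weakHarnack} then yields $h > 0$ with explicit lower bound \eqref{est:minh} such that the set $G_t := \{x \in B(\sigma_2 n) : u_t(x) \geq h\}$ has size $|G_t| \geq |B(\sigma_2 n)|/4$ for every $t \in [-\sigma_1 n^2, 0]$. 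The choice $\sigma = \min\{\sqrt{\sigma_1}, \sigma_2\}$ is made precisely so that $Q_1(\lfloor \sigma n\rfloor)$ (after an appropriate time-shift) lies inside $[-\sigma_1 n^2, 0] \times B(\sigma_2 n)$, where the conclusion of Lemma~\ref{L:weakHarnack} is available throughout.

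Next, the function $\phi(s) := (\log h - \log s)_+$ is convex and non-increasing on $(0, \infty)$, so the chain-rule argument used in Step~1 of the proof of Lemma~\ref{L:weakHarnack} (together with Lemma~\ref{L:appendixMartin}) shows that $v = \phi(u) = (\log(h/u))_+$ is subcaloric. Moreover $v \geq 0$ and $v_t \equiv 0$ on $G_t$. Applying Corollary~\ref{C:Tbound} to $v$ on $Q_1(\lfloor \sigma n\rfloor)$ and iterating over nested cylinders $Q_{\rho_k}(\lfloor \sigma n\rfloor)$ with $\rho_k = \tfrac12 + 2^{-k-1}$ to absorb the small-power $\|v\|_{L^\infty}$-term on the right-hand side (as in Step~3 of the proof of Theorem~\ref{T:bound}) then yields
\begin{equation*}
\|v\|_{L^\infty(Q_{1/2}(\lfloor \sigma n\rfloor))} \leq c\, \mathcal{C}(\omega, B(n))^{\frac{2p}{p-1}}\, \|v\|_{\underline L^1(Q_1(\lfloor \sigma n\rfloor))}.
\end{equation*}

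The main remaining task -- and the principal obstacle -- is to bound $\|v\|_{\underline L^1(Q_1)}$. For this I would combine the parabolic log-Caccioppoli estimate \eqref{est:energyg} (already established in the proof of Lemma~\ref{L:weakHarnack}), which after integration in time controls the weighted energy $\int \omega \, |\nabla \log u|^2$ in terms of $\|\omega\|_{\underline L^1(B(n))}$, with a weighted Sobolev-Poincar\'e inequality of the type used in the proof of Lemma~\ref{L:key}, which introduces the factor $\|\omega^{-1}\|_{\underline L^{d/2}(B(n))}$. The fact that $v_t \equiv 0$ on $G_t$ (a set of positive spatial fraction) allows pinning the spatial mean of $\log u$ from below by $\log h$ and converts the oscillation-control just obtained into a bound $\|v\|_{\underline L^1(Q_1)} \leq C(1 + \|\omega\|_{\underline L^1} + \|\omega^{-1}\|_{\underline L^{d/2}})$. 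Substituting back gives $v \leq C'$ on $Q_{1/2}(\lfloor \sigma n\rfloor)$, i.e.\ $u \geq h e^{-C'}$, matching the exponential form of \eqref{est:mingamma}. The delicate aspect is carrying out this Sobolev-Poincar\'e step in the degenerate discrete setting while tracking the precise dependence on $\omega$ required to match the constant in \eqref{est:mingamma}.
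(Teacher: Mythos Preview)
Your overall architecture is correct --- Lemma~\ref{L:weakHarnack} followed by a Moser $L^\infty$-bound applied to a convex decreasing function of $u$ --- and this is exactly what the paper does. But the execution has a genuine gap at the step you yourself flag as ``the principal obstacle'': the bound on $\|v\|_{\underline L^1(Q_1)}$.

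The problem is that your function $v=(\log(h/u))_+$ has no a priori upper bound: $u>0$ may be arbitrarily small. When you integrate the log-Caccioppoli inequality \eqref{est:energyg} in time, the boundary term $\|\eta^2 v_{t_{\mathrm{start}}}\|_{\underline L^1(B(n))}$ appears on the right-hand side, and it is \emph{not} controlled by $\|\omega\|_{\underline L^1}$ alone. Knowing that $v_t\equiv 0$ on the good set $G_t$ for $t\in[-\sigma_1 n^2,0]$ does not help, since $v_t$ on the complement can still be arbitrarily large. Hence your claimed bound $\|v\|_{\underline L^1(Q_1)}\le C(1+\|\omega\|_{\underline L^1}+\|\omega^{-1}\|_{\underline L^{d/2}})$, uniform in $u$, does not follow from the ingredients you list.

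The paper circumvents this by working instead with $W=g\bigl(\tfrac{u+\gamma}{h}\bigr)$, where $\gamma\in(0,h)$ is a free parameter and $g$ is the $C^1$ regularization \eqref{def:g}. The shift by $\gamma$ forces the crude bound $W\le g(\gamma/h)$, so the initial term in the integrated energy estimate is at worst $g(\gamma/h)$. Combining this with the Poincar\'e inequality on slices (exactly as you propose) and then Theorem~\ref{T:bound} --- not Corollary~\ref{C:Tbound} --- gives
\[
\bigl(\sup_{Q_{1/2}} W\bigr)^2 \;\le\; c\,\mathcal C(\omega,B(n))^{\frac{2p}{p-1}}\|\omega^{-1}\|_{\underline L^{d/2}}\Bigl(\tfrac{\|\omega\|_{\underline L^1}}{(1-\sigma_2)^2}+g(\tfrac{\gamma}{h})\Bigr),
\]
which is \emph{linear} in $g(\gamma/h)$. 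If the conclusion failed at some point $(\tilde t,\tilde x)$, one would have $W_{\tilde t}(\tilde x)\ge g(2\gamma/h)$, hence $(g(2\gamma/h))^2$ is bounded by a quantity linear in $g(\gamma/h)$. This quadratic-versus-linear comparison is violated once $\gamma$ is chosen small enough, yielding \eqref{est:mingamma}. Your proposal, lacking the $\gamma$-shift, has no such leverage.

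Two smaller points: your $\phi(s)=(\log h-\log s)_+$ is not $C^1$, so the chain-rule argument for subcaloricity and Lemma~\ref{L:appendixMartin} do not apply directly --- this is why the paper uses the smoothed $g$. And the detour through Corollary~\ref{C:Tbound} plus iteration is unnecessary: since the $\underline L^2$-norm of $W$ is what the Poincar\'e step actually controls, Theorem~\ref{T:bound} applies directly.
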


\begin{proof}[Proof of Theorem~\ref{T:weakHarnack}]

Without loss of generality, we assume $\e=1$.

Consider the function $(t,x)\mapsto W_t(x):=G(u_t(x))$, where $G(s):= g(\frac{s+\gamma }h)$ with $s\in\mathbb R$ and \BR2 suitable \ER constants $0<\gamma <h$ which are specified later. 

\step 1 $W$ is a \BR2subcaloric function\ER, i.e.\ $\frac{\mathrm{d}}{\dt} W_t(x)-\mathcal L^\omega W_t(x)\leq0$ for all $(t,x)\in Q(n)$. Indeed, this is a consequence of the convexity of $G$ in the form
\begin{align*}
 \mathcal L^\omega W_t(x)&=\sum_{y\sim x}\omega(x,y)(G(u_t(y))-G(u_t(x)))\\
 &\ge G'(u_t(x))\sum_{y\sim x}\omega(x,y)((u_t(y))-u_t(x))=G'(u_t(x))\mathcal L^\omega u_t(x),
\end{align*}
combined with the fact that $u$ is supercaloric and $G'\leq0$, and thus
\begin{align*}
\partial_t W_t(x)-\mathcal L^\omega W_t(x)\leq G'(u_t(x))(\partial_t u_t(x)-\mathcal L^\omega u_t(x))\leq0.
\end{align*}
\step 2 Let $\sigma_1\in(0,\lambda)$ and $\sigma_2\in(\lambda,1)$ be such that $\frac{1-\lambda}{1-\sigma_1}\frac{|B(n)|}{|B(\sigma_2 n)|}\le  \frac{17}{24}$  with $\sigma_2 \le 1 - \frac 1n$ and let $h=h(d,\lambda,\|\omega\|_{\underline L^1(B(n))},\sigma_2)\in(0,1)$ be as in Lemma~\ref{L:weakHarnack}. We claim that there exists $c=c(d)\in[1,\infty)$ such that
\begin{align}\label{est:weakHarnackpoincare}
\BR2\fint_{-\sigma_1 n^2}^{0}\ER\|W_t\|_{\underline L^2(B(\sigma_2 n))}^2\dt\leq c\frac{\sigma_2^2}{\sigma_1}\|\omega^{-1}\|_{\underline L^{\frac{d}2}(B(\sigma_2 n))}\lambda^{-d}\biggl(\frac{\|\omega\|_{\underline L^1(B(n))}}{(1-\sigma_2)^{2}}+g(\tfrac{\gamma }{h})\biggr).
\end{align}
Computation analogous to the one leading to~\eqref{est:energyg} in Step 1 of the proof of Lemma~\ref{L:weakHarnack}, leads to the following: \ER for any $\eta:\mathbb Z^d\to[0,1]$ with $\eta=0$ in $\mathbb Z^d\setminus B(n-1)$
\begin{equation}\label{est:energywH1}
 \frac{\mathrm d}{\dt}\|\eta^2 W_t\|_{\underline L^1(B(n))}+|B(n)|^{-1}\sum_{\ee\in\mathbb B^d}\varphi_{\eta}(\ee)\omega(\ee)(\nabla W_t(\ee))^2\leq c\|\omega\|_{\underline L^1(B(n))}\|\nabla \eta\|_{L^\infty}^2({\rm osr}\,\eta)^2.%((1-\sigma_2)n)^{-2}
\end{equation}
Choosing a suitable cut-off function satisfying $\eta=1$ in $B(\sigma_2n)$ (hence $\varphi_{\eta}(\ee) = 1$ in $B(\sigma_2n)$), $\|\nabla \eta\|_{L^\infty}\lesssim (n(1-\sigma_2))^{-1}$ and $({\rm osr}(\eta))\leq2$, we deduce from \eqref{est:energywH1} and the monotonicity of $g$ that
\begin{align}\label{est:weakHarnacknablaw}
\int_{t_0-\sigma_1n^2}^{t_0}\|\sqrt{\omega} \nabla W_t\|_{\underline L^2(B(\sigma_2n))}^2 &\le \frac{|B(n)|}{|B(\sigma_2 n)|}\biggl(c\frac{\sigma_1n^2\|\omega\|_{\underline L^1(B(n))}}{n^2(1-\sigma_2)^{2}}+\|W_{t_0-\sigma_1 n^2}\|_{\underline L^1(B(n))}\biggr)\notag
\\
&\le\frac{|B(n)|}{|B(\sigma_2 n)|}\biggl(c\frac{\|\omega\|_{\underline L^1(B(n))}}{(1-\sigma_2)^{2}}+g(\tfrac{\gamma}{h})\biggr).% c(\frac{\|\omega\|_{\underline L^1(B(n))}}{(1-\sigma_2)^{2}}+ g(\tfrac{k}{h})).
\end{align}
Assumption \eqref{assumption:weakharnack} (recall that we suppose $\e=1$) together with Lemma~\ref{L:weakHarnack} implies
\begin{equation}\label{est:weakHarnacksmallset}
|\{x\in B(\sigma_2 n),\, u_t(x)\geq h\}|\geq \frac14|B(\sigma_2n)|\quad\mbox{for all }t_0-\sigma_1 n^2\leq t\leq t_0,
\end{equation}
with $h$ satisfying~\eqref{est:minh}. Estimate \eqref{est:weakHarnackpoincare} follows from \eqref{est:weakHarnacknablaw} and \eqref{est:weakHarnacksmallset} together with the following Poincar\'e-type inequality: There exists $c=c(d)\in[1,\infty)$ such that for all $\emptyset\neq \mathcal N\subset B:=B(m)$ and $v:B\to\R$ it holds
\begin{align}\label{ineq:weightedsobharnack}
\|v-(v)_{\mathcal N}\|_{\underline L^2(B)}\leq& c\biggl(1+\frac{|B|}{|\mathcal N|}\biggr)|B|^\frac1d \|\omega^{-1}\|_{\underline L^\frac{d}2(B)}^\frac12\|\sqrt{\omega}\nabla v\|_{\underline L^2(B)},
\end{align}
where we recall $(f)_{\mathcal N} :=\ER \frac{1}{|\mathcal{N}|} \sum_{x \in \mathcal N} f(x)$. Before recalling the argument for \eqref{ineq:weightedsobharnack} we discuss how it is used to deduce \eqref{est:weakHarnackpoincare}. By definition, we have $W_t(x)=0$ for all $x\in \mathcal N_t:=\{x\in B(\sigma_2 n),\, u_t(x)\geq h\}$ and thus for all $t\in[t_0-\sigma_1n^2,t_0]$
\begin{align*}
\|W_t\|_{\underline L^2(B(\sigma_2 n))}=\|W_t-(W_t)_{\mathcal N_t}\|_{\underline L^2(B(\sigma_2 n))}\leq 5c|B(\sigma_2n)|^\frac1d \|\omega^{-1}\|_{\underline L^\frac{d}2(B(\sigma_2 n))}^{\frac 12}\|\sqrt{\omega}\nabla W_t\|_{\underline L^2(B(\sigma_2n)))}.
\end{align*}
Squaring the above expression and integrating in time from \BR2$-\sigma_1n^2$ to $0$\ER, we obtain \eqref{est:weakHarnackpoincare} using \eqref{est:weakHarnacknablaw}. 

\smallskip

Finally, we recall the computations that yield \eqref{ineq:weightedsobharnack}: For every $s\in[1,d)$, we have (with the notation of Proposition~\ref{T:sob})
\begin{align}\label{est:sobolev:fraction}
\|v-(v)_{\mathcal N}\|_{\underline L^{s_d^*}(B)}&\uoset{\eqref{est:sobolev:bulk}}{}\le \|v-(v)_{B}\|_{\underline L^{s_d^*}(B)}+|(v)_{B}-(v)_{\mathcal N}|\notag
\\
&\uoset{\eqref{est:sobolev:bulk}}{}{\le} \biggl(1+\frac{|B|}{|\mathcal N|}\biggr)\|v-(v)_{B}\|_{\underline L^{s_d^*}(B)}\notag
\\
&\stackrel{\eqref{est:sobolev:bulk}}{\le} c(d,s)\biggl(1+\frac{|B|}{|\mathcal N|}\biggr)|B|^\frac1d \|\nabla v\|_{\underline L^s(B)}.
\end{align}
Estimate \eqref{ineq:weightedsobharnack} follows from \eqref{est:sobolev:fraction} with $s=\frac{2d}{d+2}$ (and thus $s_d^*=2$) and H\"older inequality in the form
$$
\|\nabla v\|_{\underline L^\frac{2d}{d+2}(B)}\leq  \|\omega^{-1}\|_{\underline L^\frac{d}2(B)}^\frac12\|\sqrt{\omega}\nabla v\|_{\underline L^2(B)}.
$$
%
%\begin{align*}
%\|v-(v)_{\mathcal N}\|_{\underline L^2(B)}\leq& \|v-(v)_{\mathcal N}\|_{\underline L^2(B)}+|(v)_{B}-(v)_{\mathcal N}|\\
%\leq& \biggl(1+\frac{|B|}{|\mathcal N|}\biggr)\|v-(v)_{B}\|_{\underline L^2(B)}\\
%\stackrel{\eqref{est:sobolev:bulk}}{\leq}&c(d)\biggl(1+\frac{|B|}{|\mathcal N|}\biggr)|B|^\frac1d \|\nabla v\|_{\underline L^\frac{2d}{d+2}(B)}\\
%\leq&c(d)\biggl(1+\frac{|B|}{|\mathcal N|}\biggr)|B|^\frac1d \|\omega^{-1}\|_{\underline L^\frac{d}2(B)}^\frac12\|\sqrt{\omega}\nabla v\|_{\underline L^2(B)}.
%\end{align*}

\step 3 Conclusion. 

Using that $W$ is a \BR2subcaloric function \ER in $\BR2[-\sigma n^2,0]\times B( \sigma n)$ with $\sigma=\min\{\sqrt{\sigma_1},\sigma_2\}$, we obtain from Theorem~\ref{T:bound} and \eqref{est:weakHarnackpoincare} (combined with the assumption $\sigma n\ge 2^{20}$)
\begin{align*}
\sup_{(t,x)\in Q_{\frac12}(\lfloor \sigma n\rfloor)}W_t(x)^2&\stackrel{\eqref{est:Tbound}}{\lesssim} \mathcal C(\omega,B(\sigma n))^{\frac{2p}{p-1}}\|W\|^{2}_{\underline L^2(Q_1(\lfloor \sigma n\rfloor))}
\\
&\stackrel{\eqref{est:weakHarnackpoincare}}{\le} c\mathcal C(\omega,B(n))^{\frac{2p}{p-1}}\lambda^{-d}\|\omega^{-1}\|_{\underline L^{\frac{d}2}(B(n))}\biggl(\frac{\|\omega\|_{\underline L^1(B(n))}}{(1-\sigma_2)^2}+ g(\tfrac{\gamma}{h})\biggr),
 %\leq& c\biggl(\frac{\sqrt{\sigma_1}}{\sigma}\biggr)^2\biggl(\frac{\sigma_2}{\sigma}\biggr)^d\lambda^{-d}\mathcal C(\omega,B(n))^\frac{2p}{p-1}\|\omega^{-1}\|_{\underline L^{\frac{d}2}(B(n))}\biggl(\frac{\|\omega\|_{\underline L^1(B(n))}}{(1-\sigma_2)^2}+ g(\tfrac{k}{h})\biggr),
\end{align*}
where $c=c(d,p,q,\sigma_1,\sigma_2)\in[1,\infty)$ and $\mathcal C(\omega,B(n)):=\mathcal C(\omega,B(n),1)$ (see \eqref{def:C}). Choose $\gamma\in(0,\frac{h}2)$ sufficiently small such that
\begin{equation}\label{ass:weakharnackcontra}
\left(g\left(\frac{2\gamma}h\right)\right)^2>c\mathcal C(\omega,B(n))^{\frac{2p}{p-1}}\lambda^{-d}\|\omega^{-1}\|_{\underline L^\frac{d}2(B(n))}\biggl(\frac{\|\omega\|_{\underline L^1(B(n))}}{(1-\sigma_2)^2}+ g(\tfrac{\gamma}{h})\biggr).
\end{equation}
%
%\BR2 For such $\gamma$ we get $\sup_{(t,x)\in Q_{1/2}(\lfloor \sigma n \rfloor)} W_t(x) \le g(\frac{2\gamma}{h})$, which by monotonicity of $g$ implies 
Then it holds $u\ge\gamma$ on $Q_\frac12(\lfloor\sigma n\rfloor)$. Indeed, if there were $(\tilde t,\tilde x)\in Q_\frac12(\lfloor\sigma n\rfloor)$ with $u_{\tilde t}(\tilde x)<\gamma$, by  monotonicity of $g$
\begin{equation*}
 \left(g\left(\frac{2\gamma}h\right)\right)^2\leq \left(g\left(\frac{u_{\tilde t}(\tilde x)+\gamma}h\right)\right)^2\leq c\mathcal C(\omega,B(n))^{\frac{2p}{p-1}}\lambda^{-d}\|\omega^{-1}\|_{\underline L^\frac{d}2(B(n))}\biggl(\frac{\|\omega\|_{\underline L^1(B(n))}}{(1-\sigma_2)^2}+ g(\tfrac{\gamma}{h})\biggr)
\end{equation*}
which contradicts \eqref{ass:weakharnackcontra}. Finally, we notice that \eqref{ass:weakharnackcontra} is satisfied for any $\gamma$ with
%
%\begin{equation}
% \gamma<h\min\biggl\{\frac18,\exp\biggl(-\frac12 (c\mathcal C(\omega,B(n))^2\|\omega\|_{\underline L^{\frac{d}2}(B(n))}\lambda^{-d}+2\log(2))-\sqrt{\frac14(c\mathcal C(\omega,B(n))^2\|\omega\|_{\underline L^{\frac{d}2}(B(n))}\lambda^{-d}+2\log(2))^2+\frac{\mathcal C(\omega,B(n))\|\omega\|_{\underline L^1(B(n))}}{\sigma^{2+d}(1-\sigma_2)^2\lambda^d}}\biggr),\frac12\exp\biggl(-c\frac{\mathcal C(\omega,B(n))}{\sigma^{2+d}\lambda^d}\biggr)\biggr\}.
%\end{equation}
\begin{equation}\label{est:mingammapf}
 \gamma<h\exp\biggl(-\tfrac12{\rm A}-\log(2)-\sqrt{({\tfrac12\rm A}+\log(2))^2+{\rm A}\frac{\|\omega\|_{\underline L^1(B(n))}}{(1-\sigma_2)^2}-\log(2)^2}\biggr).
\end{equation}
where
$$
{\rm A}:=c\mathcal C(\omega,B(n))^{\frac{2p}{p-1}}\lambda^{-d}\|\omega^{-1}\|_{\underline L^{\frac{d}2}(B(n))},
$$
and $c=c(d,p,q,\sigma_1,\sigma_2)\in[1,\infty)$. Combining \eqref{est:mingammapf} with \eqref{est:minh}, we obtain \eqref{est:mingamma}.

\end{proof}

Theorem~\ref{T:oscillation} follows from the weak Harnack inequality Theorem~\ref{T:weakHarnack} using classical arguments adapted to the discrete setting:
%Standard arguments show how to deduce Theorem~\ref{T:oscillation} from the weak Harnack inequality  Theorem~\ref{T:weakHarnack}

\begin{proof}[Proof of Theorem~\ref{T:oscillation}]
Without loss of generality, we consider $t_0=0$ and $x_0=0$ and suppose $p,q<\infty$ (the case $p=q=\infty$ is classical; if, for instance, $p<\infty$ and $q=\infty$, we use the statement with $p$ and $\tilde q\in(\frac{d}2,\infty)$ satisfying $\frac1p+\frac1{\tilde q}<\frac{2}{d-1}$ combined with the trivial inequality $\|\omega^{-1}\|_{\underline L^{\tilde q}(B)}\leq \|\omega^{-1}\|_{\underline L^q(B)}$). Modifying $u$ by a constant if necessary, without loss of generality we can assume that
\begin{equation*}
M:=\max_{Q_1(n)}u=-\min_{Q_1(n)}u=\frac12 {\rm osc}_{Q_1(n)}u.
\end{equation*}
Moreover, w.l.o.g., we \BR2 assume ${\rm m}(\{(x,t)\in Q_1(n),\, u\geq0\})\geq \frac12{\rm m}(Q_1(n))$, since otherwise we consider $-u$ instead\ER. Thus, the function $v=1+\frac{u}M$ satisfies $\partial_t v-\mathcal L^\omega v=0$ and
\begin{equation*}
 {\rm m}(\{(x,t)\in Q_1(n),\, v\geq1\})\geq \frac12{\rm m}(Q_1(n)).
\end{equation*} 
We choose $\sigma_1 = \frac14$ and $\sigma_2^d = \frac{49}{51}$, and observe that this choice satisfies 
\begin{equation}\label{T:osc:sigma}
\text{(i)}:\quad\frac{\BR2\frac12}{1-\sigma_1}\frac{|B(n)|}{|B(\sigma_2 n)|} \le \frac{17}{24}\quad\mbox{and}\quad\text{(ii):}\quad n \ge \frac{1}{1-\sigma_2}\qquad\mbox{\BR2for all $n\geq 50d$}.
\end{equation}
%provided $n\geq 75d$%$\frac{1-\lambda}{1-\sigma_1}\frac{|B(n)|}{|B(\sigma_2 n)|} \BR2 \le \frac{17}{24}$ and $n \ge \frac{1}{1-\sigma_2}$ provided $n \ge 75d$. 
\BR2Before we give the elementary argument for \eqref{T:osc:sigma} we show that \eqref{T:osc:sigma} and Theorem~\ref{T:weakHarnack} imply the desired claim with $N:=\max\{2^{21},50d\}$: For $n\geq N$, Theorem~\ref{T:weakHarnack} with $\lambda=\frac12$ and $\sigma_1,\sigma_2$ as above (and thus $\sigma n\geq \frac12 N=2^{20}$) yields
\begin{equation}\label{est:vgammosc}
 v_t(x)\geq \gamma\qquad \mbox{for all $(t,x)\in Q_\frac18(n)$,}
\end{equation}
for some $\gamma=\gamma(d,p,q,\|\omega\|_{\underline L^p(B(n))},\|\omega^{-1}\|_{\underline L^q(B(n))})>0$.
%(provided $n\geq 2^{10}$ which ensures $\sqrt{\sigma_1}\geq 256$). 
Hence,
\begin{equation*}
-M(1-\gamma)\leq u_t(x)\leq M\qquad\mbox{for all $(t,x)\in Q_\frac18(n)$}
\end{equation*}
which implies ${\rm osc}_{Q_\frac18}u\leq 2M(1-\frac{\gamma}2)=\theta{\rm osc}_{Q_1}u$ with $\theta:=1-\frac{\gamma}2\in(0,1)$, which concludes the argument.

Finally we give the argument for \eqref{T:osc:sigma}: Inequality (ii) in \eqref{T:osc:sigma} follows from concavity of $t \mapsto t^{\frac1d}$ in the form $1-\sigma_2 = 1 - \bigl( \frac{49}{51} \bigr)^{\frac 1d} \ge (1-\frac{49}{51})\frac{1}{d}1^{\frac1d - 1} = \frac{2}{51d}$ and $\frac{51}2\leq 50$. Inequality (i) in \eqref{T:osc:sigma} can be written as $48 |B(n)| \le 51 |B(\sigma_2n)|$. Since $|B(\sigma_2n)|=(2\lfloor \sigma_2 n \rfloor+1)^d\geq (2\sigma_2 n-1)^d$ it suffices to show that  $\bigl(\frac{51}{48}\bigr)^{\frac1d} (2\sigma_2 n -1) \ge 2n+1$, which in turn is equivalent to $n\geq \frac12\frac{1+(\frac{51}{48})^\frac1d}{(\frac{49}{48})^\frac1d-1}$. Using $1+(\frac{51}{48})^\frac1d\leq 1+\frac{51}{48}$ and concavity of $t \mapsto t^{\frac 1d}$ in the form $(\frac{49}{48})^\frac1d-1\geq \frac1d(\frac{49}{48}-1)=\frac1{48d}$, we obtain that (i) in \eqref{T:osc:sigma} is satisfied for $n\geq \frac12 48d(1+\frac{51}{48})=d\frac{99}2$.

\end{proof}
\ER

\section{Local limit theorem}

\subsection{Some properties of the heat kernel}

In this section, we use the local boundedness result Theorem~\ref{T:bound} to derive a deterministic on-diagonal upper bounds on the heat kernel (see Proposition~\ref{C:boundheatkernel}). This upper bound combined with Theorem~\ref{T:oscillation} implies large-scale H\"older-continuity of the heat kernel (see Proposition~\ref{P:holderheat}) which will be a crucial ingredient in the proof of the local limit theorem. As a side result, we obtain an on-diagonal heat kernel estimate, see Corollary~\ref{P:ondiagonalbound}.

\medskip

Next we apply the local boundedness for subcaloric functions (in the form of Corollary~\ref{C:Tbound}) to the heat kernel $p^\omega$ of $X$. For this, we recall that for fixed $x\in\mathbb Z^d$ the map $[0,\infty)\times \mathbb Z^d\ni(t,y)\mapsto p_t(x,y)$ solves the Cauchy problem 
\begin{equation}\label{def:pt2}
 \partial_t p^\omega(x,\cdot)-\mathcal L^\omega p(x,\cdot)=0\quad\mbox{on $(0,\infty)\times \mathbb Z^d$ and}\quad p_0(x,y)=\delta_x(y),
\end{equation}
where $\delta_x(x)=1$ and $\delta_x(z)=0$ for $z\neq x$. 
\begin{proposition}\label{C:boundheatkernel}
 Fix $d\ge 2$, $\omega\in\Omega$, and $p\in(1,\BR2\infty)$, $q\in(\frac{d}2,\BR2\infty)$ satisfying $\frac1p+\frac1q<\frac2{d-1}$. There exists $c=c(d,p,q)\in[1,\infty)$ such that for every $x,y\in\mathbb Z^d$ 
\begin{equation}\label{est:C:heatkernel}
  %\sup_{|x-y|\leq \lambda\sqrt{t}}
  p_t^\omega(x,y)\leq c  \mathcal C\biggl(\omega,B(y,\sqrt{t})\ER\biggr)^{\frac{2p}{p-1}} t^{-\frac{d}2}\qquad\mbox{for all $t\geq1$,}
\end{equation}
 where $\mathcal C(\omega,B):=\mathcal C(\omega,B,1)$ and $\mathcal C(\omega,B,1)$ is defined in \eqref{def:C}.
% $\mathcal C$ is given in \eqref{def:C}. 
\end{proposition}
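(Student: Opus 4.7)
The plan is to apply Corollary~\ref{C:Tbound} directly to the function $u(s,z) := p_s^\omega(x,z)$, using the two elementary probabilistic facts that $u\le 1$ pointwise and that $\sum_{z\in\mathbb Z^d}u(s,z)=1$ for every $s\ge 0$. By \eqref{def:pt2} the function $u$ is caloric on $(0,\infty)\times\mathbb Z^d$. Fix $x,y\in\mathbb Z^d$ and $t\ge 1$, and set $n:=\lfloor\sqrt{t/2}\rfloor$ so that $[t-n^2,t]\subset[t/2,t]\subset(0,\infty)$. Under the preliminary assumption that $n\ge 2^{20}$ (equivalently $t$ exceeds a dimensional constant), the cylinder $Q_1(t,y,1,n)=[t-n^2,t]\times B(y,n)$ sits inside the domain of caloricity of $u$, and Corollary~\ref{C:Tbound} applied with $\tau=1$, $t_0=t$, $x_0=y$ yields
\begin{equation*}
p_t^\omega(x,y)\le \|u\|_{L^\infty(Q_{1/2}(t,y,1,n))}\le c\,\mathcal C(\omega,B(y,n))^{\frac{2p}{p-1}}\,\|u\|_{\underline L^1(Q_1(t,y,1,n))}^{1-a}\,\|u\|_{L^\infty(Q_1(t,y,1,n))}^{a},
\end{equation*}
where $a:=\bigl(\tfrac12(1+\tfrac1p)\bigr)^{\hat k}$ and $\hat k:=\lfloor\tfrac14\log_2(3n)\rfloor$.

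I would then estimate each of the two norms on the right. The $L^\infty$ factor is bounded by $1$. For the $L^1$ factor, exchanging the order of summation and using that $p_s^\omega(x,\cdot)$ is a probability measure gives
\begin{equation*}
\|u\|_{\underline L^1(Q_1(t,y,1,n))}=\frac{1}{n^2|B(n)|}\int_{t-n^2}^{t}\sum_{z\in B(y,n)}p_s^\omega(x,z)\,ds\le \frac{1}{|B(n)|}\le c\,t^{-d/2}.
\end{equation*}
Substituting back yields the intermediate bound
\begin{equation*}
p_t^\omega(x,y)\le c\,\mathcal C(\omega,B(y,n))^{\frac{2p}{p-1}}\,t^{-\frac d2(1-a)}.
\end{equation*}
Since $B(y,n)\subset B(y,\sqrt t)$ and the two balls have comparable cardinality, the normalization in the definition~\eqref{def:C} of $\mathcal C$ gives $\mathcal C(\omega,B(y,n))\le c\,\mathcal C(\omega,B(y,\sqrt t))$.

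The point requiring verification, which I regard as the only nontrivial step, is that the spurious factor $t^{(d/2)a}$ stays bounded uniformly in $t$. This is where the specific shape of $a$ matters: because $\tfrac12(1+\tfrac1p)<1$ for $p>1$ and $\hat k\ge c\log_2 t$, one obtains $a\le t^{-\eta}$ for some $\eta=\eta(p)>0$, so
\begin{equation*}
t^{\frac d2 a}\le \exp\!\left(\tfrac d2\,t^{-\eta}\log t\right)\le C(d,p).
\end{equation*}
Absorbing this factor into the constant completes the proof in the large-$t$ regime. For the remaining range $n<2^{20}$, i.e.\ $t$ bounded by a dimensional constant, the claim is trivial: $p_t^\omega(x,y)\le 1\le C\,t^{-d/2}$ with $C$ depending only on $d$, and $\mathcal C\ge 1$. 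No other obstacles arise: the analytic heavy lifting has already been carried out in Theorem~\ref{T:bound} and Corollary~\ref{C:Tbound}, and the present proof is essentially the observation that the $L^1$–$L^\infty$ interpolation version of local boundedness is tailor-made for the heat kernel, where the $L^1$ mass is preserved by probability conservation.
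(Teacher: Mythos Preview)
Your proof is correct and follows essentially the same approach as the paper: apply Corollary~\ref{C:Tbound} to the heat kernel, use $\|p_s^\omega(x,\cdot)\|_{L^1}=1$ and $\|p_s^\omega(x,\cdot)\|_{L^\infty}\le 1$ to bound the two factors, and verify that the residual factor $t^{(d/2)a}$ with $a=\bigl(\tfrac12(1+\tfrac1p)\bigr)^{\hat k}$ is uniformly bounded. The only cosmetic differences are that the paper takes $n=\lfloor\sqrt t\rfloor$ with $\tau=2$ (so the cylinder is $[0,2n^2]\times B(n)$ and $B(n)=B(\sqrt t)$ coincide), whereas you take $n=\lfloor\sqrt{t/2}\rfloor$ with $\tau=1$ and then compare $B(y,n)$ with $B(y,\sqrt t)$ via comparable cardinality; both choices work equally well.
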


\begin{remark}\label{rem:trapping}
\BR2Well-known examples of trapping of random walks, see e.g.\ \cite{BKM15}, show that the statement of Proposition~\ref{C:boundheatkernel} fails for $q<\frac{d}2$: Fix $q<\frac{d}2$. For $n \in\mathbb N$ choose $\omega\in\Omega$ as
$$
\omega(x,y)=\begin{cases}n^{-\frac{d}q}&\mbox{if $x=0$ and $|y|=1$}\\1&\mbox{otherwise}\end{cases}.
$$
Obviously, we have $\omega\leq1$ and  $\|\omega^{-1}\|_{\underline L^q(B(n))}\leq c(d)<\infty$. Moreover, an elementary computation yields
\begin{equation}\label{est:heatfail}
p_t^\omega(0,0)\geq1-tc(d) n^{-\frac{d}q}\qquad\mbox{for all $t>0$.}
\end{equation}
Clearly, \eqref{est:heatfail} with $q<\frac{d}2$ (and thus $2-\frac{d}q<0$) contradicts the validity of an estimate of the form \eqref{est:C:heatkernel} for $x=y=0$ and $t=n^2$, i.e. $p_{n^2}^\omega(0,0)\leq c(d,p,q)n^{-d}$, for $n$ sufficiently large. Since Proposition~\ref{C:boundheatkernel} follows directly from local boundedness in the form of Corollary~\ref{C:Tbound} the above argument shows that assumption $q>\frac{d}2$ is essential in Corollary~\ref{C:Tbound}.

\end{remark}

\begin{proof}[Proof of Proposition~\ref{C:boundheatkernel}]
By translation it suffices to prove the claim for $y=0$, and we use the shorthand $p_t^\omega=p_t^\omega(\BR2x,\cdot)$.

\smallskip

For \BR2$n\in\mathbb N$ specified \ER below, we set 
\begin{equation}\label{def:Q12Q1}
 Q_1:=[0,2 n^2]\times B(n),\quad Q_\frac12 := [n^2,2n^2]\times B(\tfrac{n}2).
\end{equation}
Without loss of generality, we suppose from now on that
\begin{equation}
\sqrt{t}\ge 2^{20} \textrm{ and choose } n:=\lfloor \sqrt{t}\rfloor
\end{equation}
(for $t\in[1,2^{40}]$ estimate \eqref{est:C:heatkernel} is valid with $c=2^{20d}$ since $p^\omega\leq1$). We claim that there exists $c=c(d,p,q)\in[1,\infty)$ such that
\begin{equation}\label{est:boundpsf}
 \sup_{(s,z)\in Q_\frac12} p_s^\omega(z)\leq c \mathcal C(\omega,B(n))^{\frac{2p}{p-1}}n^{-d}.
\end{equation}
%
%where $\nu$ is defined in \eqref{def:nu2}. 
Indeed, a direct consequence of Corollary~\ref{C:Tbound} (with $\tau=2$) combined with the fact
$$
\|p_t^\omega\|_{L^\infty(\mathbb Z^d)}\leq\|p_t^\omega\|_{L^1(\mathbb Z^d)}=1\qquad\textrm{ for all }t\ge0,
$$
is the existence of $c=c(d,p,q)\in[1,\infty)$ such that
\begin{equation*}%\label{est:boundpsf}
 \sup_{(s,z)\in Q_\frac12} p_s^\omega(z)\le c \mathcal C(\omega,B(n),2)^{\frac{2p}{p-1}} n^{-d(1-\frac12(1+\frac1p)^{\hat k})}, 
\end{equation*}
where $\hat k:=\lfloor\frac14\log_2(3n)\rfloor$ and the $n$-factor comes from averaging of $L^1$ over $Q_1$. The claimed estimate \eqref{est:boundpsf} follows from $\mathcal C(\omega,B,2) \le \BR2\sqrt{2}\ER \mathcal C(\omega,B,1)$ and $p>1$, thanks to which $\frac12 (1+\frac 1p) < 1$ and hence
\begin{equation*}
 \limsup_{n\to\infty}n^{d(\frac12(1+\frac1p))^{\frac14\log_2(3n)}}\leq c(d,p)<\infty.
\end{equation*}
From $n = \lfloor \sqrt{t} \rfloor \le \sqrt{t}$ it follows $n^2 \le t$ while the lower bound on $t$ gives $t \le 2n^2$, hence $(t,0) \in Q_{\frac 12}$, and~\eqref{est:boundpsf} yields
\begin{equation*}
  p_t^\omega(0) \le c \mathcal C(\omega,B(n))^{\frac{2p}{p-1}}n^{-d} \le c 2^{\frac d2}\mathcal C(2\omega,B(n))^{\frac{2p}{p-1}}t^{-\frac d2},
\end{equation*}
which concludes the proof. 
% For all $z\in\mathbb Z^d$ satisfying $|z|\leq{\color{blue}\lambda}\sqrt{t}$, the choice
% %
% \begin{equation}\label{C:heratkernel:choicetau}
%  \tau=\frac43\frac{t}{n^2}\in\left[\frac4{3(2\lambda+1)^2},\frac43\right],
% \end{equation}
% %
% implies $(t,z)\in Q_\frac12$. Hence, by \eqref{est:boundpsf} we find $c=c(d,\lambda,p,q)\in[1,\infty)$ such that for every $z\in\mathbb Z^d$ satisfying $|z|\leq{\color{blue}\lambda}\sqrt{t}$ it holds{\color{blue}
% %
% \begin{equation*}
%  p_t^\omega(z)\leq c \mathcal C(\omega,B((2\lambda+1)\sqrt{t}),\tau)^\frac{2p}{p-1}t^{-\frac{d}2}
%  \end{equation*}
% %
% which implies the claimed estimate \eqref{est:C:heatkernel} for $x=0$ (note that by the choice of $\tau$ in \eqref{C:heratkernel:choicetau} we have $\mathcal C(\omega,B((2\lambda+1)\sqrt{t}),\tau)\leq \tilde c \mathcal C(\omega,B((2\lambda+1)\sqrt{t}),1)$ for some $\tilde c=\tilde c(d,p,q,\lambda)\in[1,\infty)$).}
\end{proof}

Next, we combine Proposition~\ref{C:boundheatkernel} with Theorem~\ref{T:oscillation} to obtain large-scale (H\"older-) continuity of the heat kernel provided, we control $\|\omega\|_{\underline L^p(B)}$ and $\|\omega^{-1}\|_{\underline L^q(B)}$ in the limit $|B|\to\infty$. 
\begin{proposition}\label{P:holderheat}
Fix $d\geq2$, $\delta\in(0,1]$, $M\in[1,\infty)$, %$x\in\R^d$, 
$\omega\in\Omega$ and $p\in(1,\infty]$, $q\in(\frac{d}2,\infty]$ satisfying $\frac1p+\frac1q<\frac2{d-1}$. 
There exist $c=c(d,M,p,q)\in[1,\infty)$  and $\rho=\rho(d,M,p,q)\in(0,1)$ such that the following is true: Suppose that for every $x \in \R^d$ 
\begin{equation}\label{ass:limomega}
 \limsup_{n\to\infty}\left(\|\omega\|_{\underline L^p(B(\lfloor nx\rfloor,n))}+ \|\omega^{-1}\|_{\underline L^q(B(\lfloor nx\rfloor,n))}\right)\leq M.
 \end{equation}
Then for $ t\ge 4\delta^2$
%Then,{\color{blue}
%
\begin{equation}\label{est:heatkernel:holder}
 \limsup_{n\to\infty}\max_{y_1,y_2\in B(\lfloor nx\rfloor,\delta n)\atop s_1,s_2\in[t-\delta^2,t]}n^d|p_{n^2s_1}^\omega(0,y_1)-p_{n^2s_2}^\omega(0,y_2)|\leq c\biggl(\frac{\delta}{\sqrt{t}}\biggr)^\rho t^{-\frac{d}2}.
\end{equation}
\end{proposition}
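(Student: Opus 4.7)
The plan is to combine the on-diagonal upper bound of Proposition~\ref{C:boundheatkernel} with an iteration of the parabolic oscillation decay Theorem~\ref{T:oscillation}. Without loss of generality we may assume $p,q<\infty$ (if $p$ or $q$ equals $\infty$, replace it by a sufficiently large finite exponent in the admissible range; the hypothesis~\eqref{ass:limomega} is only strengthened). Set $t_0:=n^2 t$, $x_0:=\lfloor nx\rfloor$ and $u(s,y):=p_s^\omega(0,y)$, which by~\eqref{def:pt2} is caloric on $(0,\infty)\times\mathbb Z^d$. Pick the outer scale $N_0:=\lfloor n\sqrt t/2\rfloor$; since $t\ge 4\delta^2$ one has $N_0\ge\delta n$ for $n$ large, and $Q(N_0):=[t_0-N_0^2,t_0]\times B(x_0,N_0)\subset[3n^2t/4,n^2t]\times\mathbb Z^d\subset(0,\infty)\times\mathbb Z^d$, so that $u$ is caloric on $Q(N_0)$. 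Let $k=k_n$ be the largest integer with $N_0/8^k\ge\max\{N,\delta n\}$, where $N=N(d)$ comes from Theorem~\ref{T:oscillation}; then $8^k\le N_0/(\delta n)\le\sqrt t/(2\delta)$, and the target set $[n^2(t-\delta^2),n^2t]\times B(x_0,\delta n)$ is contained in $Q(N_0/8^k)$.

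Iterating Theorem~\ref{T:oscillation} along the nested cylinders $Q(N_0/8^j)$, $j=0,\dots,k$, yields
\begin{equation*}
\mathrm{osc}\bigl(u,Q(N_0/8^k)\bigr)\le\Bigl(\prod_{j=1}^{k}\theta_{\mathrm P}^{(j)}\Bigr)\cdot\mathrm{osc}\bigl(u,Q(N_0)\bigr),
\end{equation*}
with $\theta_{\mathrm P}^{(j)}:=\theta_{\mathrm P}\bigl(\|\omega\|_{\underline L^p(B(x_0,N_0/8^{j-1}))},\|\omega^{-1}\|_{\underline L^q(B(x_0,N_0/8^{j-1}))}\bigr)$. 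The crux of the argument, and what I expect to be the main technical obstacle, is a uniform bound
\begin{equation*}
\|\omega\|_{\underline L^p(B(x_0,r))}+\|\omega^{-1}\|_{\underline L^q(B(x_0,r))}\le C(d,M,p,q,\delta,t),\qquad r\in[\delta n,n\sqrt t],
\end{equation*}
valid for all $n$ large enough. The hypothesis~\eqref{ass:limomega} only gives control on balls of radius exactly $n$ centered near $nx'$ for a prescribed $x'\in\R^d$, so I would proceed by a covering argument: the possible ``rescaled centres'' $x_0/r\sim x\cdot(n/r)$ with $n/r\in[1/\sqrt t,1/\delta]$ lie on a bounded segment in $\R^d$; covering it by a finite $\e$-net $\{x_1',\dots,x_K'\}$, every $B(x_0,r)$ is contained in some $B(\lfloor r' x_\ell'\rfloor,r')$ with $r'\le(1+\e)r$ and bounded volume ratio, and~\eqref{ass:limomega} applied at each $x_\ell'$ (with $r'$ playing the role of $n$) delivers the claim once $n$ is sufficiently large. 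Note that both $p$ and $q$ appear via finitely many fixed rescaled centres, so only finitely many instances of the limsup in~\eqref{ass:limomega} are invoked.

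Granted the uniform bound and the monotonicity of $\theta_{\mathrm P}$ in both variables, $\theta_{\mathrm P}^{(j)}\le\bar\theta:=\theta_{\mathrm P}(C,C)<1$ for every $j$, whence
\begin{equation*}
\prod_{j=1}^{k}\theta_{\mathrm P}^{(j)}\le\bar\theta^{\,k}\le\bar\theta^{-1}(8^k)^{\log_8\bar\theta}\le c\bigl(\delta/\sqrt t\bigr)^{\rho},\qquad\rho:=-\log_8\bar\theta>0.
\end{equation*}
For the base oscillation, the same covering bound controls $\mathcal C(\omega,B(y,\sqrt s))$ uniformly for $(s,y)\in Q(N_0)$ (where $\sqrt s\in[n\sqrt{3t/4},n\sqrt t]$), and Proposition~\ref{C:boundheatkernel} combined with $s\ge 3n^2t/4$ yields $\|u\|_{L^\infty(Q(N_0))}\le c\,n^{-d}t^{-d/2}$, hence $\mathrm{osc}(u,Q(N_0))\le 2\|u\|_{L^\infty(Q(N_0))}\le c\,n^{-d}t^{-d/2}$. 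Combining,
\begin{equation*}
\mathrm{osc}\bigl(u,Q(N_0/8^k)\bigr)\le c\bigl(\delta/\sqrt t\bigr)^{\rho}t^{-d/2}n^{-d},
\end{equation*}
which after multiplication by $n^d$ and passing to the limsup in $n$ is precisely~\eqref{est:heatkernel:holder}.
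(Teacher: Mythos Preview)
Your approach is essentially the same as the paper's: iterate Theorem~\ref{T:oscillation} on nested cylinders of scale $\sim n\sqrt t\cdot 8^{-j}$ down to scale $\delta n$, and cap the base oscillation with Proposition~\ref{C:boundheatkernel}. The paper's proof is leaner in two respects worth noting.

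First, and most importantly, be careful with the dependence of your constant $C$. As written, you allow $C=C(d,M,p,q,\delta,t)$, which then feeds into $\bar\theta=\theta_{\mathrm P}(C,C)$ and hence into $\rho=-\log_8\bar\theta$; this would make $\rho$ depend on $\delta$ and $t$, contradicting the statement. In fact the covering argument is unnecessary: at each of the finitely many discrete scales $r_j=N_0/8^j$ one has the exact identity $B(\lfloor nx\rfloor,r_j)=B(\lfloor r_j\cdot x_j'\rfloor,r_j)$ with $x_j':=x\cdot(n/r_j)$, so the hypothesis~\eqref{ass:limomega} applies \emph{directly} at the fixed point $x_j'$ (with $r_j\to\infty$ as $n\to\infty$), giving a bound $M$ independent of $\delta,t$. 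The number of scales $j$ depends on $\delta,t$, but that only affects how large $n$ must be, not the bound itself. This is exactly how the paper proceeds.

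Second, for the base oscillation you want to control $\mathcal C(\omega,B(y,\sqrt s))$ uniformly over $(s,y)\in Q(N_0)$. The paper avoids this uniformity by a single enlargement: every such ball sits inside $B(\lfloor nx\rfloor,4n\sqrt t)$ with comparable radius, so $\mathcal C(\omega,B(y,\sqrt s))\le c(d)\,\mathcal C(\omega,B(\lfloor nx\rfloor,4n\sqrt t))$, and one further application of~\eqref{ass:limomega} (at the point $x/(4\sqrt t)$) suffices.
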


\begin{proof}[Proof of Proposition~\ref{P:holderheat}]
\BR2Without loss of generality we assume $p,q<\infty$. Let $x\in \R^d$ be fixed. \ER For $k\in\BR2 \mathbb N_0 \cup \{-1\}$, we set $\delta_k:=\frac12 8^{-k}\sqrt t$ and consider the sequence of parabolic cylinders
\begin{equation*}
 Q_k:=n^2[t-\delta_k^2,t]\times B(\lfloor nx\rfloor,\delta_k n).
\end{equation*}
Set $k_0:=\max\{k\in\mathbb N,\,\delta_k\geq \delta\}$. Since $k_0$ is finite and $B(\lfloor nx\rfloor,\delta_kn) = B(\lfloor \delta_kn \frac{x}{\delta_k}\rfloor,\delta_kn)$, using 
assumption~\eqref{ass:limomega} for finitely many points $\frac{x}{\delta_k}$, $k = -1, \ldots, k_0$, we see that 
\begin{equation}\label{omega:control}
\|\omega\|_{\underline L^p(B(\lfloor nx\rfloor,\delta_k n))} + \|\omega^{-1}\|_{\underline L^q(B(\lfloor nx\rfloor,\delta_k n))} \leq M, \quad k = -1,\ldots,k_0 ,
\end{equation}
provided $n$ is sufficiently large. 
Combining this with Theorem~\ref{T:oscillation} %and assumption \eqref{ass:limomega}, 
\ER we find $\overline \theta=\overline \theta(d,M,p,q)\in(0,1)$ such that for $n$ sufficiently large it holds
\begin{equation*}
 \osc_{Q_k}p^\omega\leq \overline\theta  \osc_{Q_{k-1}}p^\omega\qquad \textrm{for all } 1 \le k\leq k_0,
\end{equation*}
where we use the shorthand $p^\omega=p_{\cdot}^\omega(0,\cdot)$, and thus (by iteration)
\begin{equation}\label{est:heatkernel:holder:iteration}
 {\rm osc}_{Q_{k_0}}p^\omega\leq {\overline\theta}^{k_0}  {\rm osc}_{Q_{0}}p^\omega.
\end{equation}
The claimed estimate \eqref{est:heatkernel:holder} is a consequence of \eqref{est:heatkernel:holder:iteration} combined with the following three facts
\begin{align*}
 &n^2[t-\delta^2,t]\times B(\lfloor nx\rfloor,\delta n)\subset Q_{k_0},\\
 &{\overline \theta}^{k_0}\leq {\overline \theta}^{-1}\biggl( \frac{2\delta}{\sqrt{t}}\biggr)^{\frac13|\ln(\overline\theta)|} ,\\
 & {\rm osc}_{Q_{0}}p^\omega\leq 2\|p^\omega\|_{L^\infty(n^2t[\frac34 ,1]\times B(\lfloor nx\rfloor, \frac{\sqrt t}{2} \ER n)}\stackrel{\eqref{est:C:heatkernel}}{\leq} c n^{-d} t^{-\frac{d}2},%\biggl(1+\frac{(\delta+|x|)n}{n\sqrt t}\biggr),
\end{align*}
where $c=c(d,M,p,\BR2q)\in[1,\infty)$. In order to apply~\eqref{est:C:heatkernel}, we used that $\mathcal C(\omega,B(y,\sqrt{t}))$ appearing in~\eqref{est:C:heatkernel} is up to a  multiplicative constant controlled by $\mathcal C(\omega,B(\lfloor nx \rfloor, 4n \sqrt{t})) = \mathcal C (\omega,B(\lfloor nx \rfloor, \delta_{-1} n)) \stackrel{\eqref{omega:control}}{\le} c(d,M,p,q)$. 
\end{proof}

From Proposition~\ref{C:boundheatkernel}, we directly deduce the corresponding heat kernel estimate for the random conductance model. This extends \cite[Proposition~3.6]{MO16} to the case of unbounded conductances.
\begin{corollary}\label{P:ondiagonalbound}
Suppose that Assumption~\ref{ass} is satisfied and that there exists $p\in(1,\BR2\infty)$, $q\in(\frac{d}2,\BR2\infty)$ satisfying $\frac1p+\frac1q<\frac2{d-1}$ such that \eqref{ass:moment} is valid. Let $\nu=\nu(d,p,q)\in(0,1)$ be as in \eqref{def:nu}. Then there exists a random variable $\mathcal X\geq0$ such that
\begin{equation}\label{ondiagonalbound:moments}
 %\forall r<\tfrac{p-1}{p}(1-\nu)(\tfrac1q+\tfrac{2-\nu}p)^{-1}\qquad\mathbb E[\mathcal X^r]<\infty
  \mathbb E[\mathcal X^r]<\infty \quad \forall 0<r<\tfrac{p-1}{p}(1-\nu)(\tfrac1q+\tfrac{2-\nu}p)^{-1}
\end{equation}
and
\begin{equation}\label{est:ondiagonal}
 \sup_{|x|\leq\sqrt{t}} p_t^\omega(0,x)\leq t^{-\frac{d}2}\mathcal X(\omega)\quad\mbox{for all $t\geq1$}.
\end{equation}
\end{corollary}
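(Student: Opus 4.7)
The strategy is to absorb the right-hand side of Proposition~\ref{C:boundheatkernel} into a single stationary quantity by taking a supremum over scales, and then to bound its moments by combining H\"older's inequality with Wiener's maximal ergodic theorem on $\mathbb Z^d$.

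Concretely, I would set
\[
\mathcal X(\omega) := c_0\,\sup_{n\geq 1} \mathcal C(\omega,B(0,n))^{\frac{2p}{p-1}},
\]
with $\mathcal C(\omega,B):=\mathcal C(\omega,B,1)$ as in \eqref{def:C} and a constant $c_0=c_0(d,p,q)\in[1,\infty)$ fixed below. To verify the pointwise bound~\eqref{est:ondiagonal}, fix $t\geq 1$ and $|x|\leq\sqrt t$, choose $n:=\lceil 2\sqrt t\rceil$ so that $B(x,\sqrt t)\subset B(0,n)$, and use the trivial ball-comparison
\[
\|\omega\|_{\underline L^p(B(x,\sqrt t))}^p\,\leq\,\frac{|B(0,n)|}{|B(x,\sqrt t)|}\|\omega\|_{\underline L^p(B(0,n))}^p\,\leq\, c(d)\|\omega\|_{\underline L^p(B(0,n))}^p
\]
(and the analogous estimate for $\omega^{-1}$). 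Inspecting~\eqref{def:C} then yields $\mathcal C(\omega,B(x,\sqrt t))\leq c(d,p,q)\mathcal C(\omega,B(0,n))$, and choosing $c_0$ to absorb this constant together with the constant from Proposition~\ref{C:boundheatkernel} gives $t^{d/2}p_t^\omega(0,x)\leq \mathcal X(\omega)$ uniformly in $|x|\leq\sqrt t$.

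The moment bound~\eqref{ondiagonalbound:moments} is the substantive part. Set $s:=\tfrac{rp}{(p-1)(1-\nu)}$ and write $A_n:=\|\omega\|_{\underline L^p(B(0,n))}^p$, $B_n:=\|\omega^{-1}\|_{\underline L^q(B(0,n))}^q$. From~\eqref{def:C} one gets $\mathcal C^{2p/(p-1)}\leq c+c\bigl(B_n^{1/q}(A_n^{1/p}+1)^{2-\nu}\bigr)^{p/((p-1)(1-\nu))}$, and bounding the supremum of a product by the product of suprema yields
\[
\mathcal X(\omega)^r\,\leq\, c+c\,\bigl(\sup_n B_n\bigr)^{s/q}\bigl(\sup_n A_n^{1/p}+1\bigr)^{s(2-\nu)}.
\]
The stated restriction on $r$ is equivalent to $s\bigl(\tfrac1q+\tfrac{2-\nu}{p}\bigr)<1$, so one may pick conjugate exponents $\alpha,\beta>1$ with $\tfrac1\alpha>\tfrac{s}{q}$ and $\tfrac1\beta>\tfrac{s(2-\nu)}{p}$; H\"older's inequality then reduces the task to showing that $(\sup_n B_n)^{\rho_B}$ and $(\sup_n A_n)^{\rho_A}$ are integrable for suitable $\rho_B,\rho_A<1$.

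The remaining input is Wiener's maximal ergodic theorem for the action $\{\tau_x\}_{x\in\mathbb Z^d}$. Writing the bond sums defining $A_n$ and $B_n$ as Birkhoff averages of the stationary functions $\omega\mapsto \omega(\{0,e_i\})^p$ and $\omega\mapsto\omega(\{0,e_i\})^{-q}$ ($i=1,\dots,d$), which are in $L^1(\mathbb P)$ by~\eqref{ass:moment}, one obtains weak-$(1,1)$ estimates of the form $\lambda\,\mathbb P[\sup_n A_n>\lambda]\leq c\,\mathbb E[\omega(\ee)^p]$ and similarly for $B_n$. A layer-cake integration then gives $\mathbb E[(\sup_n A_n)^{\rho_A}]+\mathbb E[(\sup_n B_n)^{\rho_B}]<\infty$ for every $\rho_A,\rho_B\in(0,1)$, which closes the estimate. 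The main obstacle is precisely this calibration: the passage to the supremum over scales in $\mathcal X$ forces the use of the maximal operator, which only yields $L^\rho$-control for $\rho<1$; the assumption $s(\tfrac1q+\tfrac{2-\nu}{p})<1$ is exactly the budget needed to split this sub-$L^1$ mass between $A_n$ and $B_n$ via H\"older.
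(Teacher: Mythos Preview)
Your proof is correct and follows essentially the same route as the paper: both define $\mathcal X$ via a supremum over scales of $\mathcal C(\omega,B(0,n))^{2p/(p-1)}$, reduce the pointwise bound to Proposition~\ref{C:boundheatkernel} plus a ball-inclusion comparison, and obtain the moment bound by H\"older together with the maximal ergodic theorem. The paper is terser---it just cites the $L^r$ maximal inequalities from \cite[Corollary~A.2]{MO16}---whereas you spell out the weak-$(1,1)$ plus layer-cake argument, but the content is the same and your calibration of the H\"older exponents reproduces exactly the threshold in~\eqref{ondiagonalbound:moments}.
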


\begin{proof}[Proof of Corollary~\ref{P:ondiagonalbound}]

By Proposition~\ref{C:boundheatkernel} and the definition of $\mathcal C$ (see \eqref{def:C}) there exists $c=c(d,p,q)\in[1,\infty)$ such that for every $t\geq1$
\begin{equation*}
\sup_{|y|\leq \sqrt{t}}p_t^\omega(0,y)\leq c  \mathcal C\bigl(\omega,B(x, 2 \sqrt{t})\bigr)^{\frac{2p}{p-1}} t^{-\frac{d}2}\leq c\mathcal C_{\rm max}(\omega)t^{-\frac{d}2},
\end{equation*}
with $\mathcal C_{\rm max}$ given by
\begin{equation*}%\label{def:Cmax}
\mathcal C_{{\rm max}}(\omega):=\max\{1,\bigl(\mathcal M_q(\omega^{-1})(1+ \mathcal M_p(\omega))^{2-\nu}\bigr)^{\frac1{1-\nu}\frac{p}{p-1}}\}\in[1,\infty),
\end{equation*}
where $\mathcal M$ denotes the maximal operator given by
\begin{equation*}%\label{def:Mr}
(\mathcal M_r(f))^r:=\sup_{n\in\mathbb N}\|f\|_{\underline L^r(B(n))}^r\qquad\mbox{for all $r\geq1$ and $f:\mathbb Z^d\to\R$}.
\end{equation*}
Hence, we have \eqref{est:ondiagonal} with $\mathcal X=c(\mathcal C_{{\rm max}}(\omega))$ and the claimed moment conditions \eqref{ondiagonalbound:moments} easily follow from H\"older inequality and the $L^r$ ($1<r\leq\infty$) inequalities for the maximal operator (see e.g.\ \cite[Corollary A.2]{MO16}).
\end{proof}

\subsection{Proof of Theorem~\ref{T:Locallimit}}

By now it is well-established that quenched invariance principles (see Theorem~\ref{T}) combined with additional regularity properties of the heat kernel yield local limit theorems, see \cite{ADS16,BH09}. Hence we only provide sketch of the proof.
 
\begin{proof}[Proof of Theorem~\ref{T:Locallimit}]
\BR2We only show that for every $x\in\R^d$ and $t>0$ %, it holds
\begin{equation}\label{eq:claimlocallimitpointwise}
 \lim_{n\to\infty}|n^dp_{n^2t}^\omega(0,\lfloor nx\rfloor)-k_t(x)|=0\qquad\mathbb P\mbox{-a.s.},%\sum_{i=1}^4J_{i,n}(t,x,\delta),
\end{equation}
\ER
where the Gaussian heat kernel $k_t$ is defined in~\eqref{def:kt}. From the pointwise result \eqref{eq:claimlocallimitpointwise} the desired claim \eqref{eq:claimlocallimit} follows by a covering argument exactly as in \cite[proof of Proposition~3.1]{CS19}.

For given $x\in\mathbb R^d$ and $\delta>0$, we introduce
\begin{equation*}
 \square(x,\delta):=x+[-\delta,\delta]^d,\quad \hat\square_n (x,\delta):=(\square(nx,n\delta))\cap\mathbb Z^d,
\end{equation*}
and recall the elementary fact
\begin{equation}\label{lim:discrete}
\lim_{n\to\infty}\frac{n^d(2\delta)^d}{|\hat\square_n(x,\delta)|}=1.
\end{equation}
We write
\begin{equation*}
 J_n(t,x):=n^dp_{n^2t}^\omega(0,\lfloor nx\rfloor)-k_t(x)=\sum_{i=1}^4J_{i,n}(t,x,\delta),
\end{equation*}
where% the Gaussian heat kernel $k_t$ is defined in~\eqref{def:kt} and
\begin{align*}
 J_{1,n}(t,x,\delta):=&\frac{n^d}{|\hat\square_n(x,\delta)|}\sum_{z\in\hat \square_n(x,\delta)}p_{n^2t}^\omega(0,\lfloor nx\rfloor)-p_{n^2t}^\omega(0,z),\\
 J_{2,n}(t,x,\delta):=&\frac{n^d}{|\hat\square_n(x,\delta)|}\biggl(\sum_{z\in\hat \square_n(x,\delta)}p_{n^2t}^\omega(0,z)-\int_{\square(x,\delta)}k_t(y)\,dy\biggr)\\
 J_{3,n}(t,x,\delta):=&\frac{n^d}{|\hat\square_n(x,\delta)|}\biggl(\int_{\square(x,\delta)}(k_t(y)-k_t(x))\,dy\biggr)\\
 J_{4,n}(t,x,\delta):=&k_t(x)\biggl(\frac{n^d(2\delta)^d}{|\hat\square_n(x,\delta)|}-1\biggr).
\end{align*}
It suffices to show
\begin{equation}\label{lim:locallim4}
 \limsup_{\delta\to0}\limsup_{n\to\infty}|J_{i,n}(t,x,\delta)|=0\quad\mbox{for all $i=\{1,2,3,4\}$}\quad \BR2 \mathbb P\mbox{-a.s.}.
\end{equation}
\ER A combination of \eqref{lim:discrete} and the local Lipschitz-continuity of the heat kernel $k$ yield \eqref{lim:locallim4} for $i=3$ and $i=4$. For $i=2$, the convergence in \eqref{lim:locallim4} follows directly form the quenched invariance principle Theorem~\ref{T} and finally for $i=1$, we note that by Proposition~\ref{P:holderheat} and Lemma~\ref{L:ergodic} below
\begin{equation*}
 \limsup_{n\to\infty}|J_{1,n}(t,x,\delta)|\leq c\biggl(\frac{\delta}{\sqrt{t}}\biggr)^\rho t^{-\frac{d}2},\quad \mathbb P\mbox{-a.s.}.
\end{equation*}  
where the right-hand side tends to zero as $\delta\to0$.

%It suffices to show \footnote{\BR1 2DO: adjust the arguments for $\sup_{x,t}$, plus comments wrt general Slowik etc}
%%
%\begin{equation}\label{lim:locallim4}
% \limsup_{\delta\to0}\sup_{t\in I\atop x\in K}\limsup_{n\to\infty}|J_{i,n}(t,x,\delta)|=0\quad\mbox{for all $i=\{1,2,3,4\}$.}
%\end{equation}
%%
% A combination of \eqref{lim:discrete} and the local Lipschitz-continuity of the heat kernel $k$ yield \eqref{lim:locallim4} for $i=3$ and $i=4$. For $i=2$, the convergence in \eqref{lim:locallim4} follows directly form the quenched invariance principle Theorem~\ref{T} and finally for $i=1$, we note that by Proposition~\ref{P:holderheat} and Lemma~\ref{L:ergodic} below
%%
%\begin{equation}
% \limsup_{n\to\infty}|J_{1,n}(t,x,\delta)|\leq c\biggl(\frac{\delta}{\sqrt{t}}\biggr)^\rho t^{-\frac{d}2},
%\end{equation}  
%%
%where the right-hand side tends to zero as $\delta\to0$.
\end{proof}

In the proof of Theorem~\ref{T:Locallimit} we used the following consequence of the spatial ergodic theorem:
\begin{lemma}\label{L:ergodic}
Suppose that Assumption~\ref{ass} and \ref{ass2} are satisfied. Then, there exists $c=c(d)\in[1,\infty)$ such that 
\begin{equation}\nonumber
\sup_{x\in\R^d}\limsup_{n\to\infty}(\|\omega\|_{\underline L^p(B(\lfloor nx\rfloor,n))}^p+\|\omega^{-1}\|_{\underline L^q(B(\lfloor nx\rfloor,n))}^q)\leq c\sum_{x\in\mathbb Z^d}(\mathbb E[\omega(0,x)^p]+\mathbb E[\omega(0,x)^{-q}])<\infty\quad \BR2 \mathbb P\mbox{-a.s.}.
\end{equation} 
\end{lemma}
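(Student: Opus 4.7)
My plan is to reduce the statement to the multiparameter pointwise ergodic theorem applied to the two $L^1$-observables
\[
F_p(\omega) := \sum_{i=1}^d \omega(0,e_i)^p, \qquad F_q(\omega) := \sum_{i=1}^d \omega(0,e_i)^{-q},
\]
whose integrability is guaranteed by Assumption~\ref{ass2}. Associating each bond $\ee=\{z,z+e_i\} \in B(v,n)_{\mathbb B^d}$ with its ``smaller'' endpoint $z$ yields the pointwise comparison
\[
\sum_{\ee \in B(v,n)_{\mathbb B^d}} \omega(\ee)^p \;\le\; \sum_{z \in B(v,n)} F_p(\tau_z \omega),
\]
and an analogous inequality for $\omega^{-1}$. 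Combined with the elementary bound $|B(v,n)_{\mathbb B^d}| \ge c(d)\,|B(v,n)|$ (uniform in $v$ and valid for $n$ large), this reduces the lemma to showing that the shifted Birkhoff averages
\[
A_n(x,\omega) := \frac{1}{|B(\lfloor nx\rfloor,n)|}\sum_{z \in B(\lfloor nx\rfloor,n)} F_p(\tau_z \omega)
\]
(and its $F_q$-analogue) satisfy $\limsup_{n\to\infty} A_n(x,\omega) \le c(d)\,\mathbb E[F_p]$ uniformly in $x \in \mathbb R^d$.

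For each fixed $x \in \mathbb R^d$, the sequence $\bigl(B(\lfloor nx\rfloor,n)\bigr)_{n}$ consists of translates of the centered cube $B(n)$ and is therefore a Tempelman-admissible F\"olner sequence for the $\mathbb Z^d$-shift action. The pointwise ergodic theorem along such sequences (either via the Wiener--Tempelman theorem, or via the identity $A_n(x,\omega) = A_n(0,\tau_{\lfloor nx\rfloor}\omega)$ combined with a F\"olner comparison to centred averages) yields $A_n(x,\omega) \to \mathbb E[F_p]$ for $\mathbb P$-a.e.\ $\omega$. Choosing a countable dense set $D\subset \mathbb R^d$, I obtain a common $\mathbb P$-full-measure event $\Omega_0$ on which the convergence holds simultaneously for every $y\in D$ and for both observables.

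To extend the bound uniformly in $x \in \mathbb R^d$, I approximate $x$ by $y \in D$ with $|x-y|\le \eta$ small. The symmetric difference $B(\lfloor nx\rfloor,n)\triangle B(\lfloor ny\rfloor,n)$ is contained in a boundary shell of thickness at most $n|x-y|+1$ around $\partial B(\lfloor ny\rfloor,n)$ and hence has cardinality $\le c(d)\,n^d\eta$. Controlling the contribution of the symmetric difference by the maximal ergodic function of $F_p$ (or by the ergodic theorem on a slightly enlarged cube around $y$, on which the convergence already holds on $\Omega_0$), I get
\[
\limsup_{n\to\infty} A_n(x,\omega) \;\le\; \bigl(1 + c(d)\eta\bigr)\,\mathbb E[F_p],
\]
and letting $\eta \to 0$ yields $\limsup_n A_n(x,\omega) \le \mathbb E[F_p]$ for every $x \in \mathbb R^d$ on $\Omega_0$. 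Taking $\sup_x$, combining with the analogous bound for $F_q$, and observing that $\sum_{x\in\mathbb Z^d}\mathbb E[\omega(0,x)^p] = 2\mathbb E[F_p]$ (only nearest-neighbour bonds contribute), yields the claim with a constant $c = c(d)$.

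The main obstacle I expect is the application of the pointwise ergodic theorem along shifted cubes together with the uniform-in-$x$ conclusion; this crucially exploits that the limit is a deterministic constant independent of $x$, so that it suffices to verify the bound along a countable dense subset and then propagate it by an elementary symmetric-difference estimate. The bond-to-vertex reduction and the reduction to Birkhoff sums are then routine.
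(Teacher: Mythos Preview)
The paper does not actually supply a proof of this lemma; it is stated as ``a consequence of the spatial ergodic theorem'' and left at that. Your proposal therefore cannot be compared to the paper's argument, but it is a correct and natural way to substantiate the claim. The reduction of the bond averages to vertex averages of $F_p,F_q\in L^1(\mathbb P)$ is exactly right, and invoking a Tempelman/Lindenstrauss-type pointwise ergodic theorem along the translated cubes $B(\lfloor ny\rfloor,n)$ is the standard tool here (these sequences are F{\o}lner and satisfy the Tempelman doubling condition with constant independent of $y$).

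One small imprecision worth tightening: in your density step you appeal to ``the ergodic theorem on a slightly enlarged cube around $y$, on which the convergence already holds on $\Omega_0$''. As written, $\Omega_0$ only guarantees convergence along $B(\lfloor ny\rfloor,n)$, not along $B(\lfloor ny\rfloor,\lfloor\lambda n\rfloor)$ for $\lambda>1$. The clean fix is to build $\Omega_0$ from a countable family of pairs $(y,\lambda)\in\mathbb Q^d\times\mathbb Q_{>0}$ (each giving a tempered F{\o}lner sequence $n\mapsto B(\lfloor ny\rfloor,\lfloor\lambda n\rfloor)$), or equivalently to note that $B(\lfloor nx\rfloor,n)\subset B(\lfloor n'y'\rfloor,n')$ with $y'=y/(1+c\eta)\in\mathbb Q^d$ and $n'=\lfloor(1+c\eta)n\rfloor$, so that the containing cube already belongs to your countable family. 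With this adjustment your bound $\limsup_n A_n(x,\omega)\le(1+c(d)\eta)^d\,\mathbb E[F_p]$ follows, and letting $\eta\downarrow0$ concludes exactly as you indicate.
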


\section{Elliptic regularity: Proof of Theorem~\ref{P:oscillationelliptic}}
We adapt the classical strategy of Moser~\cite{Moser60} to the non-uniformly elliptic and discrete setting. As in the parabolic case, a key ingredient is local boundedness for non-negative \BR2 subharmonic functions.\ER 
\begin{theorem}\label{T:ellipticboundedness}
Fix $d\geq2$, $\omega\in\Omega$ and let $p,q\in(1,\infty]$ be such that $\frac1p+\frac1q<\frac2{d-1}$. 
\begin{enumerate}[(a)]
 \item Fix $d\geq3$.  For every $\gamma\in(0,1]$, there exists $c=c(d,p,q,\gamma)\in[1,\infty)$ such that for every $u>0$ satisfying $-\mathcal L^\omega  u\leq0$ in $B(2n)$, $n\in \mathbb N$ 
 %it holds
 %
\begin{equation}\label{est:T:boundl1}
 \max_{x\in B(n)}u(x)\leq c\Lambda_{p,q}^\omega( B(2n))^{\frac{\delta+1}{2\delta\gamma}}\|u\|_{\underline L^{2p'\gamma}(B(2n))},
\end{equation}
where $\delta:=\frac{1}{d-1}-\frac1{2p}-\frac1{2q}>0$, $p':=\frac{p}{p-1}$ and for every bounded set $ S\subset\mathbb Z^d$ 
\begin{equation}\label{def:lambda}
\Lambda_{p,q}^\omega(S):=\|\omega\|_{\underline L^p(S)}\|\omega^{-1}\|_{\underline L^q(S)}.
\end{equation}
\item Fix $d=2$. There exists $c\in[1,\infty)$ such that for every $u>0$ satisfying $-\mathcal L^\omega u\BR2\leq0$ in $B(2n)$, $n\in \mathbb N$ %it holds
\begin{equation}\label{est:T:boundl12d}
 \max_{x\in B(n)}u(x)\leq c\left(n\|\omega^{-1}\|_{\underline L^1(B(2n))}^\frac12\|\sqrt \omega \nabla u\|_{\underline L^2(B(2n))}+\|u\|_{\underline L^{1}(B(2n))}\right).
\end{equation}

\end{enumerate}

\end{theorem}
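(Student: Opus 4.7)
The plan is to follow the Moser iteration scheme of the parabolic analogue Theorem~\ref{T:bound}, simplified by the absence of a time variable. I will need three ingredients: (i) a Caccioppoli inequality for powers of $u$; (ii) a one-step integrability gain obtained by combining Sobolev on spheres (Proposition~\ref{T:sob}(ii)) with the optimized cutoff from Lemma~\ref{L:optimcutoff}; (iii) the geometric iteration itself together with a final interpolation-absorption producing the $\gamma$-dependence. The replacement of the classical integrability threshold $\frac{1}{p}+\frac{1}{q}<\frac{2}{d}$ by $\frac{1}{p}+\frac{1}{q}<\frac{2}{d-1}$ is due precisely to using the spherical Sobolev inequality in place of the bulk one.

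For part~(a), $d\geq3$: I will first derive the Caccioppoli inequality by testing $-\mathcal L^\omega u\leq0$ against $\eta^2 u^{2\alpha-1}$ for $\alpha\geq\tfrac12$ and $\eta$ a cutoff, obtaining
\[
\sum_\ee \eta^2(\ee)\omega(\ee)(\nabla u^\alpha(\ee))^2\leq C\alpha^2\sum_\ee\omega(\ee)(u^\alpha(\ee))^2(\nabla\eta(\ee))^2,
\]
which is exactly~\eqref{eq:basic:subp1} stripped of the time derivative. Next, for $1\leq\rho<\sigma\leq 2n$, I will minimize the right-hand side over $\eta\in\mathcal A(\rho,\sigma)$ via Lemma~\ref{L:optimcutoff} and then apply H\"older with $\omega\in L^p$, Sobolev on spheres (Proposition~\ref{T:sob}(ii)) with exponent $p_\ast$ defined by $\frac{1}{p_\ast}=\frac{1}{d-1}+\frac{1}{2}-\frac{1}{2p}$ (so that the Sobolev conjugate $(p_\ast)_{d-1}^\ast=2p/(p-1)$), and H\"older with $\omega^{-1}\in L^q$, using the relation $\frac{p_\ast}{2-p_\ast}\leq q\iff\frac{1}{p}+\frac{1}{q}\leq\frac{2}{d-1}$; this reproduces the elliptic analogue of the simplified one-step estimate~\eqref{est:L:key:s2:claima:simpler}. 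Combining with the weighted Sobolev inequality on balls yields a one-step integrability improvement of the form
\[
\|u^\alpha\|_{\underline L^Q(B(\rho))}\leq C\alpha\left(\frac{\Lambda_{p,q}^\omega(B(\sigma))}{(1-\rho/\sigma)^{2+1/p+1/q}}\right)^{\!1/2}\|u^\alpha\|_{\underline L^{2p'}(B(\sigma))}
\]
with an exponent $Q>2p'$ whose gap over $2p'$ is controlled by $\delta$. Iterating this estimate over geometric sequences $\rho_k=n(1+2^{-k})$ and $\alpha_k$ --- exactly as in Step~3 of the proof of Theorem~\ref{T:bound} --- I expect the geometric series to converge (since $\delta>0$), producing $\|u\|_{L^\infty(B(n))}\leq C\Lambda_{p,q}^\omega(B(2n))^\beta\|u\|_{\underline L^{2p'}(B(2n))}$ for some explicit $\beta$. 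Finally, the $\gamma$-dependence is obtained by the interpolation $\|u\|_{\underline L^{2p'}(B)}\leq\|u\|_{\underline L^{2p'\gamma}(B)}^\gamma\|u\|_{L^\infty(B)}^{1-\gamma}$ followed by the standard absorption argument over shrinking balls (again mirroring Step~3 of the parabolic proof), which will deliver the claimed power $(\delta+1)/(2\delta\gamma)$ of $\Lambda_{p,q}^\omega(B(2n))$.

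Part~(b), $d=2$, will not require iteration. I will use Caccioppoli with $\alpha=1$ and an affine cutoff $\eta$ equal to $1$ on $B(n)$ and $0$ outside $B(2n)$, then invoke the $d=2$ version of the spherical Sobolev argument (the last display of Substep~3.1 in the proof of Lemma~\ref{L:key}, where in 2D one uses $\|v\|_{L^\infty(\partial B)}\lesssim\|\nabla v\|_{L^1(\partial B)}+r^{-1}\|v\|_{L^1(\partial B)}$), summed over radii. A final H\"older step converting $\|\nabla u\|_{\underline L^1}$ into $\|\omega^{-1}\|_{\underline L^1}^{1/2}\|\sqrt\omega\nabla u\|_{\underline L^2}$ will deliver~\eqref{est:T:boundl12d}. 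The main obstacle will be the careful bookkeeping of exponents in Step~3 of part~(a), specifically verifying that the combined Moser iteration and the absorption produce exactly the exponent $(\delta+1)/(2\delta\gamma)$ on $\Lambda_{p,q}^\omega$ rather than a slightly different algebraic combination of $\delta$, $p'$ and $Q$.
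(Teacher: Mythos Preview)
Your proposal is correct and is precisely the elliptic simplification of the parabolic argument (Theorem~\ref{T:bound}) that you invoke. Note, however, that the paper does not actually reprove this statement: its ``proof'' consists of a reference to \cite[Corollary~1, Proposition~4]{BS19b}, where the same estimates are established for harmonic functions, together with the remark that the argument carries over verbatim to non-negative subharmonic functions. What you have outlined is exactly the Moser-iteration scheme of \cite{BS19b} --- Caccioppoli, optimized radial cutoff via Lemma~\ref{L:optimcutoff}, spherical Sobolev, geometric iteration, and final interpolation/absorption for the $\gamma$-dependence --- so your approach coincides with the one the paper defers to.
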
 
\begin{proof}
In \cite[Corollary 1, Proposition 4]{BS19b} the corresponding estimates with $\max u$ replaced by $\max |u|$ are proven for harmonic functions (i.e.\ $u$ satisfying $\mathcal L^\omega u=0$) without any sign condition on $u$. The  proofs apply almost verbatim to non-negative \BR2 subharmonic functions \ER and thus are omitted here.
\end{proof}
\begin{theorem}[Weak elliptic Harnack inequality]\label{T:weakHarnack:elliptic}
Fix $d\geq2$, $\omega\in\Omega$, and $p,q\in(1,\infty]$ satisfying $\frac1p+\frac1q<\frac2{d-1}$. Let $u\geq0$ be such that $\BR2-\mathcal L^\omega u\geq0$ in $B(4n)$. Suppose that there exist $\e>0$ and $\lambda\in(0,1)$ such that
\begin{equation}\label{assumption:weakharnack:elliptic}
 |\{x\in Q(2n),\, u(x)\geq\e\})\geq \lambda |B(2n)|.
\end{equation}
There exists a constant $c=c(d,p,q)\in[1,\infty)$ such that
\begin{equation}\label{claim:osc:elliptic}
 u(x)\geq\gamma\quad\textrm{for all }x\in B(n),
\end{equation}
where
$$
\gamma= \e\begin{cases}\exp\bigl(-c\lambda^{-1}\Lambda_{1,1}^\omega(B(4(n)))^{\frac12}\bigr)&\textrm{if }d=2,\\
\exp\bigl(-c\lambda^{-1}\Lambda_{p,q}^\omega(B(4(n)))^{\frac12+\frac{\delta+1}{\delta}p'(\frac12+\frac1q-\frac1d)}\bigr)&\textrm{if } d\geq3.\end{cases}
$$
\end{theorem}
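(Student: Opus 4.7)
The plan is to adapt the parabolic weak Harnack argument of Theorem~\ref{T:weakHarnack} to the elliptic setting. Since the hypothesis~\eqref{assumption:weakharnack:elliptic} already supplies a bulk set on which $u \ge \varepsilon$ (in contrast to the parabolic case, which first required Lemma~\ref{L:weakHarnack}), the elliptic proof is considerably simpler. After reducing to $\varepsilon = 1$ by scaling, I introduce the logarithmic cutoff $W(x) := g(u(x) + \gamma)$, where $g$ is the convex, decreasing regularization of $-\log$ from~\eqref{def:g} and $\gamma \in (0, \bar c/2)$ is a parameter to be fixed at the end. Since $u$ is superharmonic, $g$ is convex, and $g' \le 0$, the discrete Jensen computation from Step~1 of the proof of Theorem~\ref{T:weakHarnack} carries over verbatim and shows that $W \ge 0$ is subharmonic. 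Moreover, because $g$ vanishes on $[1,\infty)$, we have $W = 0$ on $\mathcal N := \{x \in B(2n) : u(x) \ge 1\}$, and by hypothesis $|\mathcal N| \ge \lambda |B(2n)|$.

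The next step is a Caccioppoli-type bound for $W$: testing the superharmonicity of $u$ against $\eta^2 g'(u + \gamma)$ for an affine cutoff $\eta$ with $\eta \equiv 1$ on $B(2n)$, $\eta \equiv 0$ outside $B(4n-1)$, $|\nabla \eta| \lesssim n^{-1}$, and using the inequalities $g'(r)^2 \le 3 g''(r)$ and $-r g'(r) \le \tfrac 43$ (valid by the choice of $\bar c$) exactly as in Step~1 of Lemma~\ref{L:weakHarnack}, yields
\[
 \|\sqrt\omega\, \nabla W\|_{\underline L^2(B(2n))}^2 \lesssim n^{-2}\|\omega\|_{\underline L^1(B(4n))}.
\]
To pass from this energy bound to a norm bound, I apply the Poincaré--Sobolev estimate~\eqref{ineq:weightedsobharnack} in the form valid for $d = 2$ (with $\|\omega^{-1}\|_{\underline L^1}$) and for $d \ge 3$ refined by running the discrete Sobolev inequality~\eqref{est:sobolev:bulk} with the tailored exponent $s = 2q/(q+1)$ followed by Hölder, so that the weight that ultimately appears is $\|\omega^{-1}\|_{\underline L^q}^{1/2}$. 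Since $W = 0$ on $\mathcal N$, the mean-value $(W)_{\mathcal N}$ vanishes and one concludes
\[
 \|W\|_{\underline L^{r}(B(2n))} \lesssim \lambda^{-1}\,\|\omega^{-1}\|_{\underline L^q(B(2n))}^{1/2}\,\|\omega\|_{\underline L^1(B(4n))}^{1/2},
\]
with $r = s_d^*$ for the chosen Sobolev exponent (and $r = 2$ for $d = 2$).

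Finally, since $W$ is non-negative and subharmonic on $B(4n)$, the elliptic local boundedness result (Theorem~\ref{T:ellipticboundedness}(a)) yields $\max_{B(n)} W \le c\, \Lambda_{p,q}^\omega(B(4n))^{(\delta + 1)/(2 \delta \gamma_0)} \|W\|_{\underline L^{2 p' \gamma_0}(B(4n))}$ for any $\gamma_0 \in (0, 1]$. Selecting $\gamma_0$ so that $2 p' \gamma_0 = r$ (and for $d = 2$ invoking the alternative estimate Theorem~\ref{T:ellipticboundedness}(b) together with the energy bound above), the previous three displays combine to
\[
 \max_{B(n)} W \le c\, \lambda^{-1}\, \Lambda_{p,q}^\omega(B(4n))^{\tfrac 12 + \tfrac{\delta+1}{\delta}\, p'\bigl(\tfrac 12 + \tfrac 1q - \tfrac 1d\bigr)}.
\]
The lower bound~\eqref{claim:osc:elliptic} is now forced by a one-point contradiction: if there were $\tilde x \in B(n)$ with $u(\tilde x) < \gamma$, then by monotonicity of $g$ one would have $W(\tilde x) \ge g(2 \gamma) = -\log(2\gamma)$, and choosing $\gamma$ small enough that $g(2\gamma)$ exceeds the right-hand side above produces a contradiction, yielding exactly the stated exponential form of $\gamma$. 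The main obstacle is bookkeeping: the Sobolev exponent and the parameter $\gamma_0$ in Theorem~\ref{T:ellipticboundedness} must be chosen in a coupled way so that the weights combine into precisely the power $\tfrac 12 + \tfrac{\delta+1}{\delta}\, p'(\tfrac 12 + \tfrac 1q - \tfrac 1d)$ of $\Lambda_{p,q}^\omega$ claimed in the conclusion.
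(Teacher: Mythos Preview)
Your proposal is correct and follows essentially the same approach as the paper's proof: reduce to $\varepsilon=1$, show that the logarithmic transform $W$ is subharmonic, derive the Caccioppoli-type gradient bound $\|\sqrt\omega\,\nabla W\|_{\underline L^2(B(2n))}^2 \lesssim n^{-2}\|\omega\|_{\underline L^1(B(4n))}$, upgrade this to an $L^Q$-bound on $W$ via the weighted Poincar\'e--Sobolev inequality (with $s=2q/(q+1)$ so that $s_d^*=Q$, exactly as in the paper), and then feed this into the elliptic local boundedness Theorem~\ref{T:ellipticboundedness} with the matching exponent $\gamma_0=Q/(2p')$.

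The only differences are cosmetic: the paper takes $W=g(u)$ and reads off the lower bound on $u$ directly from $\max_{B(n)} W \le C'$, while you take $W=g(u+\gamma)$ and run the contradiction argument from the parabolic proof. Your choice is arguably cleaner, since it keeps $W$ well-defined even where $u$ vanishes, but the two routes are otherwise identical.
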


%Before we give the proof of Theorem~\ref{T:weakHarnack:elliptic}, we note that 

\begin{proof}[Proof of Theorem~\ref{T:weakHarnack:elliptic}]

Without loss of generality, we assume $\e=1$. Throughout the proof we write $\lesssim$ if $\leq$ holds up to a positive constant depending on $d$, $p$, and $q$.
Consider the function $x\mapsto W(x):=g(u(x))$, with $g$ being defined in~\eqref{def:g}. 

\step 1 $W$ is \BR2 subharmonic\ER, i.e.\ $-\mathcal L^\omega \BR2 W\leq0$ in $B(4n)$.

This follows from Step~1 of the proof of Theorem~\ref{T:weakHarnack}.

\step 2 Fix $d\geq3$. We claim that there exists $c=c(d,q)\in[1,\infty)$ such that
\begin{align}\label{est:weakHarnackpoincare:elliptic}
\|W\|_{\underline L^Q(B(2n))}^2\leq c\lambda^{-2}\|\omega^{-1}\|_{\underline L^{q}(B(4n))}\|\omega\|_{\underline L^1(B(4n))},
\end{align}
where $\frac1Q=\frac12+\frac1{2q}-\frac1d$. Indeed, as in Proof of Lemma~\ref{L:weakHarnack}, Step~1, we obtain for any $\eta:\mathbb Z^d\to[0,1]$ with $\eta=0$ in $\mathbb Z^d\setminus B(4n-1)$% $\sigma_2\in[\frac12,1)$ with $(1-\sigma_2)n\geq2$ that
\begin{equation}\label{est:energywH1:elliptic}
 |B(2n)|^{-1}\sum_{\ee\in\mathbb B^d}\omega(\ee)\varphi_{\eta}(\ee)(\nabla W(\ee))^2\leq c\|\omega\|_{\underline L^1(B(4n))}\|\nabla \eta\|_{L^\infty}^2({\rm osr}\,\eta)^2.%((1-\sigma_2)n)^{-2}
\end{equation}
Choosing a suitable cut-off function satisfying $\eta=1$ in $B(2n)$, $\|\nabla \eta\|_{L^\infty}\lesssim n^{-1}$ and $({\rm osr}(\eta))\leq2$, we deduce from \eqref{est:energywH1} that
\begin{align}\label{est:weakHarnacknablaw:elliptic}
\|\sqrt{\omega} \nabla W\|_{\underline L^2(B(2n))}^2\leq& cn^{-2}\|\omega\|_{\underline L^1(B(4n))}.
\end{align}
Moreover, appealing to Sobolev inequality as in \eqref{ineq:weightedsobharnack}, we find $c=c(d,q)\in[1,\infty)$ satisfying
\begin{align*}
\|W\|_{\underline L^{Q}(B(2n))}^2\leq cn^2\lambda^{-2}\|\nabla W\|_{\underline L^{\frac{2q}{q+1}}(B(2n))}^2\leq cn^2\lambda^{-2}\|\omega^{-1}\|_{\underline L^q (B(2n))}\|\sqrt\omega \nabla W\|_{\underline L^2(B(2n))}^2,
\end{align*}
and claim \eqref{est:weakHarnackpoincare:elliptic} follows by \eqref{est:weakHarnacknablaw:elliptic}

\step 3 Conclusion for $d\geq3$.

Combining \eqref{est:weakHarnackpoincare:elliptic} with estimate \eqref{est:T:boundl1} (with $\gamma=\frac{Q}{2p'}$) and the fact that $W$ is subharmonic, we obtain
\begin{equation}\label{est:ellipticosc:dgeq3}
 \max_{x\in B(n)}W(x)\lesssim \lambda^{-1}\Lambda_{p,q}^\omega( B(2n))^{p'\frac{\delta+1}{2\delta}(1+\frac1q-\frac2d)}\|W\|_{\underline L^Q(B(2n))}\lesssim\lambda^{-1}\Lambda_{p,q}^\omega( B(4n))^{p'\frac{\delta+1}{2\delta}(1+\frac1q-\frac2d)+\frac12}.
\end{equation}
Estimate \eqref{claim:osc:elliptic} (for $d\geq3$) follows from \eqref{est:ellipticosc:dgeq3} and the definition of $W$ and $g$.

\step 4 The case $d=2$.

Assumption \eqref{assumption:weakharnack:elliptic} and a suitable version of Sobolev inequality (see \eqref{ineq:weightedsobharnack}) yield
\begin{equation}
 \|W\|_{\underline L^1(B(2n))}\lesssim n\lambda^{-1}\|\nabla W\|_{\underline L^1(B(2n))}\leq n\lambda^{-1}\|\omega^{-1}\|_{\underline L^1 (B(2n))}^{\frac12}\|\sqrt\omega \nabla W\|_{\underline L^2(B(2n))}
\end{equation}
and in combination with \eqref{est:weakHarnacknablaw:elliptic} and \eqref{est:T:boundl12d} (using that $W$ is subharmonic by Step~1)
\begin{align*}
\|W\|_{L^{\infty}(B(2n))}\lesssim \lambda^{-1}\|\omega^{-1}\|_{\underline L^1 (B(2n))}^{\frac12}\|\omega\|_{\underline L^1 (B(2n))}^{\frac12}.
\end{align*}
The claimed estimate follows by the definition of $W$.
\end{proof}

\begin{proof}[Proof of Theorem~\ref{P:oscillationelliptic}]
Appealing to the weak Harnack inequality Theorem~\ref{T:weakHarnack:elliptic} the proof follows by the same argument as in the parabolic case, see Theorem~\ref{T:oscillation}. 
\end{proof}

\section*{Acknowledgments}

The authors were supported by the German Science Foundation DFG in context of the Emmy Noether Junior Research Group BE 5922/1-1. M.S. thanks Martin Slowik for useful discussion on the subject and for providing an early version of \cite{CS19}.

\appendix

\section{Technical estimates}

We recall some estimates, \BR2 mainly proven in \cite[Lemma A.1]{ADS15}, \ER that we used in the proof of Theorem~\ref{T:bound}.
\begin{lemma} 
For $a\in\R$ and $\alpha\in \R\setminus\{0\}$, set $\tilde a_\alpha=|a|^\alpha {\rm sign}\, a$.
\begin{enumerate}[(i)]
\item For all $a,b\in\R$ and any $\alpha,\beta\neq0$
\begin{equation}\label{est:d:chain:1}
 |\tilde a_\alpha-\tilde b_\alpha|\leq \left(1\vee \left|\frac\alpha\beta\right|\right)|\tilde a_\beta-\tilde b_\beta|(|a|^{\alpha-\beta}+|b|^{\alpha-\beta})
\end{equation}
\item For all $a,b\geq0$ and $\alpha>\frac12$
\begin{equation}\label{est:d:chain:2}
 (a^\alpha- b^\alpha)^2\leq \biggl|\frac{\alpha^2}{2\alpha-1}\biggr|(a-b)(a^{2\alpha-1}- b^{2\alpha-1})
\end{equation}
\item For all $a,b\geq0$ and $ \alpha\ge 1$
\begin{equation}\label{est:d:chain:3}
 (a^{2\alpha-1}+b^{2\alpha-1})|a-b|\leq  |a^\alpha- b^\alpha|(a^\alpha+b^\alpha).
\end{equation}
\end{enumerate}

\end{lemma}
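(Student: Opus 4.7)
The plan is to reduce each of the three inequalities to elementary one-variable calculus, exploiting symmetry and the integral representation of differences of powers.

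For (i), the key observation is that the map $s \mapsto \tilde s_\gamma$ is odd and differentiable away from $0$ with derivative $\gamma|s|^{\gamma-1}$ (in the appropriate sense), so that for any exponent $\gamma\neq 0$ one can write
\begin{equation*}
\tilde a_\gamma - \tilde b_\gamma = \gamma \int_b^a |s|^{\gamma-1}\,ds
\end{equation*}
(with a sign convention accounting for $a<b$). The first step is to reduce to the case $a,b \ge 0$ by monotonicity and oddness; when $a$ and $b$ have opposite signs one splits the integral at $0$. Then I would estimate pointwise $|s|^{\alpha-1} \le |s|^{\beta-1}(|a|^{\alpha-\beta} + |b|^{\alpha-\beta})$ for $s$ between $a$ and $b$, by distinguishing whether $\alpha \geq \beta$ or $\alpha < \beta$ (this is where the factor $1 \vee |\alpha/\beta|$ appears, from trading the prefactor $\alpha$ for $\beta$). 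Comparing the two integral expressions then yields (i). The mildly delicate point is the opposite-sign case, which is the step I expect to require the most care.

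For (ii), by symmetry we may assume $a\ge b\ge 0$. The proof is a direct application of Cauchy--Schwarz: one writes
\begin{equation*}
(a^\alpha - b^\alpha)^2 = \alpha^2 \biggl( \int_b^a s^{\alpha-1}\,ds \biggr)^{\!2} \le \alpha^2 (a-b) \int_b^a s^{2\alpha-2}\,ds = \frac{\alpha^2}{2\alpha-1}(a-b)(a^{2\alpha-1}-b^{2\alpha-1}),
\end{equation*}
using $\alpha > \tfrac12$ to make sense of the last integral. This is immediate and not the obstacle.

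For (iii), again assume $a\ge b\ge 0$ by symmetry. The strategy is to expand both sides and reduce to an algebraic inequality. The right-hand side equals $a^{2\alpha}-b^{2\alpha}$, while the left-hand side expands as $a^{2\alpha}-b^{2\alpha} - (a^{2\alpha-1}b - ab^{2\alpha-1}) = a^{2\alpha}-b^{2\alpha} - ab(a^{2\alpha-2}-b^{2\alpha-2})$. Since $\alpha\ge 1$ implies $2\alpha-2\ge 0$ and $a\ge b\ge 0$ gives $a^{2\alpha-2}\ge b^{2\alpha-2}$, the subtracted term is non-negative, yielding the inequality. This is the shortest of the three and presents no real difficulty.
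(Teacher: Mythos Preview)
Your proposal is correct. The paper in fact only proves (iii), deferring (i) and (ii) to \cite[Lemma~A.1]{ADS15}, and your argument for (iii) is essentially identical to the paper's: both expand $(a^{2\alpha-1}+b^{2\alpha-1})(a-b)$ and $(a^\alpha-b^\alpha)(a^\alpha+b^\alpha)=a^{2\alpha}-b^{2\alpha}$ and reduce the inequality to $ab(a^{2\alpha-2}-b^{2\alpha-2})\ge 0$; the only cosmetic difference is that the paper first normalizes $b=1$ by homogeneity before doing the same algebra.

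For (i) and (ii) you actually supply more than the paper does. Your Cauchy--Schwarz proof of (ii) is the standard clean argument and is complete as written. Your integral-representation approach to (i) is the right idea and the pointwise bound $s^{\alpha-\beta}\le |a|^{\alpha-\beta}+|b|^{\alpha-\beta}$ for $s$ between $a$ and $b$ (splitting on the sign of $\alpha-\beta$) does yield the constant $|\alpha/\beta|\le 1\vee|\alpha/\beta|$ in the same-sign case. The opposite-sign case you flag as delicate is indeed the only place requiring care (and may need separate handling when an exponent is negative, since then the integrand blows up at $0$); but this is exactly the subtlety you already identified, and the reduction you outline is the correct route.
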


\begin{proof}
Parts (i) and (ii) are contained in \cite[Lemma A.1]{ADS15}. We provide the argument for~\eqref{est:d:chain:3}: We start with few reductions. First, by symmetry we can assume $a \ge b$ and the case $b=0$ being trivial allows us to farther assume $b>0$. By homogeneity of both sides of the inequality, we can farther assume $b = 1$, in which case the inequality reads
 \begin{equation*}
  a^{2\alpha} - a^{2\alpha-1} + a - 1 = (a^{2\alpha-1} + 1)(a-1) \le (a^\alpha-1)(a^\alpha+1) = a^{2\alpha} - 1,
 \end{equation*}
 which reduces to $a(1-a^{2(\alpha-1)}) = -a^{2\alpha-1} + a \le 0$, which using $2(\alpha-1) \ge 0$ is equivalent to $a \ge 1$, thus completing the argument. 
\end{proof}

Finally we recall a technical estimate given in \cite{CS19} that we used in the proof of Lemma~\ref{L:weakHarnack}.
\begin{lemma}[{\cite[Lemma A.1]{CS19}}]\label{L:appendixMartin}
Let $g\in C^1(0,\infty)$ be a convex, non-increasing function. Assume the $g'$ is piecewise differentiable and that there exists $\gamma\in(0,1]$ such that $\gamma g'(r)^2\leq g''(r)$ for a.e.\ $r\in(0,\infty)$. Then, for all $x,y>0$ and $a,b\geq0$
\begin{align*}
&-(b^2g'(y)-a^2g'(x))(y-x)\\
\leq& \begin{cases}
-\frac\gamma2(\min\{a^2,b^2\}(g(y)-g(x))^2+\frac2\gamma\max\{\frac{a^2}{b^2},\frac{b^2}{a^2}\}(b-a)^2&\mbox{if $\min\{a,b\}>0$,}\\
\max\{-xg'(x),-yg'(y)\}(b-a)^2&\mbox{if $\min\{a,b\}=0$}
\end{cases}
\end{align*}

\end{lemma}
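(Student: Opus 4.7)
The plan is to exploit the convexity of $g$ together with the hypothesis $\gamma(g'(r))^2\le g''(r)$ to produce a quantitative Jensen-type inequality, and then to use Young's inequality to redistribute a ``cross-term'' arising when $a\ne b$. Observe first that both sides of the claimed inequality are invariant under the simultaneous swap $(a,x)\leftrightarrow(b,y)$ (the LHS absorbs two sign flips, the RHS is symmetric), so I may assume $y\ge x$ throughout. Since $g$ is non-increasing we have $g'\le 0$, and since $g$ is convex $g'$ is non-decreasing. The hypothesis combined with Cauchy--Schwarz gives the core estimate: writing $g(y)-g(x)=\int_x^y g'(r)\,dr$, one obtains
\[
(g(y)-g(x))^2 \le (y-x)\int_x^y (g'(r))^2\,dr \le \gamma^{-1}(y-x)\int_x^y g''(r)\,dr = \gamma^{-1}(y-x)(g'(y)-g'(x)),
\]
i.e.\ $(g'(y)-g'(x))(y-x)\ge \gamma(g(y)-g(x))^2$, which is the engine of the whole argument.

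For the case $\min\{a,b\}>0$ I split on whether $a\le b$ or $a>b$. Take first $a\le b$ and decompose
\[
b^2g'(y)-a^2g'(x) = a^2\bigl(g'(y)-g'(x)\bigr)+(b^2-a^2)g'(y).
\]
Multiplying by $-(y-x)$, the first piece is controlled by the core estimate: $-a^2(g'(y)-g'(x))(y-x)\le -a^2\gamma(g(y)-g(x))^2$. For the second piece, monotonicity of $-g'$ gives $g(x)-g(y)=\int_x^y(-g'(r))\,dr\ge (y-x)(-g'(y))$, so $-g'(y)(y-x)\le g(x)-g(y)=|g(y)-g(x)|$ and
\[
-(b^2-a^2)g'(y)(y-x)\le (b-a)(b+a)\,|g(y)-g(x)|.
\]
Applying Young's inequality with weight $\gamma a^2$, together with $(b+a)^2\le 4b^2$, bounds this by $\tfrac{\gamma a^2}{2}(g(y)-g(x))^2+\tfrac{2}{\gamma}\tfrac{b^2}{a^2}(b-a)^2$. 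The coefficient of $(g(y)-g(x))^2$ after summation is $-a^2\gamma+\tfrac{\gamma a^2}{2}=-\tfrac{\gamma}{2}\min\{a^2,b^2\}$, matching the claim exactly. The mirror subcase $a\ge b$ uses the dual decomposition $b^2g'(y)-a^2g'(x)=b^2(g'(y)-g'(x))+(b^2-a^2)g'(x)$: the cross term $(a^2-b^2)g'(x)(y-x)$ is already non-positive, so Young's inequality is not even needed and one obtains the stronger bound $-b^2\gamma(g(y)-g(x))^2$.

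When $\min\{a,b\}=0$ the argument is structurally different and does not invoke the hypothesis on $g''$. If $a=0$ the LHS equals $b^2(-g'(y))(y-x)$, and the decomposition $-g'(y)(y-x)=-yg'(y)+xg'(y)\le -yg'(y)\le \max\{-xg'(x),-yg'(y)\}$ (where $xg'(y)\le 0$ since $x>0$ and $g'(y)\le 0$) produces the claim with $(b-a)^2=b^2$. If $b=0$ the LHS equals $a^2g'(x)(y-x)\le 0$, trivially dominated by the non-negative right-hand side. The main obstacle of the proof is the bookkeeping in the Young's step: the weight $\gamma a^2$ has to be chosen precisely so that exactly one half of the favorable $a^2\gamma(g(y)-g(x))^2$ term is consumed, yielding the optimal constants $\gamma/2$ and $2/\gamma$ in the final bound, and the trivial inequality $(b+a)^2\le 4b^2$ in the subcase $a\le b$ is what converts the natural factor $(b+a)^2/a^2$ into the asymmetric $\max\{a^2/b^2,b^2/a^2\}$.
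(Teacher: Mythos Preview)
Your proof is correct. The paper does not supply its own proof of this lemma; it merely quotes the result from \cite{CS19}, so there is no in-paper argument to compare against. Your argument---the Cauchy--Schwarz/absolute-continuity estimate $(g(y)-g(x))^2\le\gamma^{-1}(y-x)(g'(y)-g'(x))$ followed by the decomposition $b^2g'(y)-a^2g'(x)=\min\{a^2,b^2\}(g'(y)-g'(x))+(b^2-a^2)g'(\cdot)$ and a Young's inequality on the cross term---is a clean and standard route, and the case split $a\le b$ versus $a>b$ is handled correctly (in the latter subcase the cross term already has the right sign, so no Young's step is needed, as you note). The treatment of the degenerate case $\min\{a,b\}=0$ via $-g'(y)(y-x)=-yg'(y)+xg'(y)\le -yg'(y)$ is also correct.
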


\end{document}